\newtheorem{thm}{Theorem}[section]
\newtheorem{cor}[thm]{Corollary}
\newtheorem{lem}[thm]{Lemma}
\newtheorem{prop}[thm]{Proposition}
\newtheorem{defn}[thm]{Definition}
\newtheorem{rem}[thm]{Remark}
\newtheorem*{thm*}{Theorem}
\newtheorem*{defn*}{Definition}
\numberwithin{equation}{section}
\newcommand{\dx}{\,{\rm d}x}
\newcommand{\dX}{\,{\rm d}X}
\newcommand{\dz}{\,{\rm d}z}
\newcommand{\dZ}{\,{\rm d}Z}
\newcommand{\dt}{\,{\rm d}t}
\newcommand\dtau{{\,{\rm d}\tau}}
\newcommand{\rd}{{\rm d}}
\newcommand{\BB}{\mathbb{B}}
\newcommand{\NN}{\mathbb{N}}
\newcommand{\QQ}{\mathbb{Q}}
\newcommand{\RR}{\mathbb{R}}
\newcommand\EE{{\mathcal{E}}}
\newcommand\FF{{\mathcal{F}}}
\newcommand\JJ{{\mathcal{J}}}
\newcommand\UU{{\mathcal{U}}}
\newcommand{\ep}{\epsilon}
\newcommand{\vep}{\varepsilon}
\newcommand{\dd}{\delta}
\def\I{\mathrm{I}}
\def\R{\mathrm{R}}
\def\E{\mathrm{E}}
\def\supp{\mathrm{supp}} 
\DeclareMathOperator*{\osc}{osc}
\DeclareMathOperator*{\esssup}{ess\,sup}
\DeclareMathOperator*{\essinf}{ess\,inf}
\begin{document}

\title[]{On the existence and H\"older regularity of solutions to some nonlinear Cauchy-Neumann problems}

\author[A. Audrito]{Alessandro Audrito}
\address{Alessandro Audrito \newline \indent
ETH Z\"urich, D-Math, R\"amistrasse 101, 8092 Zurich, Switzerland.
 }
\email{alessandro.audrito@math.ethz.ch, alessandro.audrito@gmail.com}

\date{\today} 

\subjclass[2010] {
35B65, 
35R11, 
35K57, 
58J35, 
35A15.   
}

\keywords{H\"older regularity, Uniform Estimates, Nonlocal diffusion, Variational techniques.}
%
%
%
%
%
%
%
%
%
%
%

\thanks{This project has received funding from the European Union's Horizon 2020 research and innovation programme under the Marie Sk{\l}odowska--Curie grant agreement 892017 (LNLFB-Problems). The author wishes to thank Prof. Matteo Bonforte and Prof. Paolo Tilli for inspiring discussions concerning the results presented in this paper.
}

\begin{abstract}
We prove \emph{uniform} parabolic H\"older estimates of De Giorgi-Nash-Moser type for sequences of minimizers of the functionals
\[
\EE_\vep(W) = \int_0^\infty \frac{e^{- t/\varepsilon}}{\varepsilon} \bigg\{ \int_{\mathbb{R}_+^{N+1}} y^a \left( \vep|\partial_t W|^2 + |\nabla W|^2 \right)\rd X + \int_{\mathbb{R}^N \times\{0\}} \Phi(w) \dx \bigg\}\dt, \qquad \vep \in (0,1) 
\]
where $a \in (-1,1)$ is a fixed parameter, $\mathbb{R}_+^{N+1}$ is the upper half-space and $\rd X = \rd x \rd y$. As a consequence, we deduce the existence and H\"older regularity of weak solutions to a class of weighted nonlinear Cauchy-Neumann problems arising in combustion theory and fractional diffusion. 
\end{abstract}

\maketitle



%
%
%
%
\section{Introduction}
In this paper we construct \emph{H\"older continuous} weak solutions to the weighted nonlinear Cauchy-Neumann problem
\begin{equation}\label{eq:ParReacDiff}
\begin{cases}
y^a\partial_t U - \nabla\cdot (y^a \nabla U) = 0 \quad &\text{ in }  \mathbb{R}^{N+1}_+ \times (0,\infty) \\
-\partial_y^a U = - \beta(u)                                \quad &\text{ in } \mathbb{R}^N \times \{0\} \times (0,\infty) \\
U|_{t=0} = U_0     \quad &\text{ in } \mathbb{R}^{N+1}_+,
\end{cases}
\end{equation}
where $N \geq 1$, $a \in (-1,1)$, $\RR_+^{N+1} := \{X=(x,y): x\in \RR^N, y > 0\}$, $\nabla$ and $\nabla\cdot$ stand for the gradient and the divergence operators w.r.t. $X$, respectively, and
\[
u := U|_{y=0} \qquad \text{ and } \qquad  \partial_y^a U := \lim_{y\to0^+}y^a\partial_y U.
\]
The weight $y^a$ belongs to the Muckenhoupt $A_2$-class (cf. \cite{ChiarenzaSerapioni1985:art,FabesKS1982:art}), the function $U_0$ is a given initial data  and $\beta \in C(\RR;\RR)$ is of combustion type, satisfying
\begin{equation}\label{eq:Reaction}
\beta \geq 0, \qquad \supp \beta = [0,1], \qquad \int_0^1\beta(v)\,\rd v = \tfrac{1}{2}.
\end{equation}
Problem \eqref{eq:ParReacDiff} is related to the \emph{localized/extended} version of the reaction-diffusion equation 
\begin{equation}\label{eq:NonlocalEqBeta}
(\partial_t - \Delta)^su = - \beta(u),
\end{equation}
where $s := \tfrac{1-a}{2} \in (0,1)$ (see \cite{NystromSande2016:art,StingaTorrea2015:art} and \cite[Section 2]{BanGarofalo2017:art}), and the diffusion process is governed by the fractional power of the heat operator, which is nonlocal both in space and time:
\[
(\partial_t - \Delta)^su(x,t) = \frac{1}{|\Gamma(-s)|} \int_{-\infty}^t \int_{\RR^N} \left[u(x,t) - u(z,\tau)\right] \frac{G_N(x-z,t-\tau)}{(t-\tau)^{1+s}} \dz \rd \tau,
\]
where $G_N$ is the fundamental solution to the heat equation and $\Gamma$ is the gamma function (cf. \cite[Section 28]{SamkoKilbasMarichev:1993}). For smooth functions $u$ depending only on the space variables $x\in\RR^N$, it reduces to the fracional laplacian $(-\Delta)^s$, while if $u = u(t)$, it is the Marchaud derivative $(\partial_t)^s$ (cf. \cite{Marchaud1927:art,StingaTorrea2015:art}).

Such operator appears in a wide range of applications such as biology, physics and finance (see e.g. \cite{AthCaffaMilakis2016:art,DainelliGarofaloPetTo2017:book} and the monograph \cite{Hilfer2000:book}) and has notable interpretations in Continuous Time Random Walks theory (see \cite{MetzlerKlafter2000:art} and the references therein). In recent years this class of equations has been the subject of intensive research: we quote \cite{CabreEtAl2015:art,CafMelletSire2012:art} for traveling wave analysis, \cite{BanGarofalo2017:art} and \cite{AthCaffaMilakis2016:art,BanGarDanPetr2020:art,BanGarDanPetr2021:art,DainelliGarofaloPetTo2017:book} for unique continuation and obstacle problems, and \cite{AudritoTerracini2020:art} for nodal set analysis. In our context, solutions to \eqref{eq:NonlocalEqBeta} may be employed to approximate some \emph{free boundary} problems arising in combustion theory and flame propagation, in the singular limit $\beta \to \tfrac{1}{2}\dd_0$ (cf. \cite{CafRoqueSire2010:art,PetShiSire2016:art} for the nonlocal elliptic framework and \cite{CafVaz1995:art} for the local parabolic one).

In this work, weak solutions to \eqref{eq:ParReacDiff} will be obtained through a \emph{variational} approximation procedure known in the literature as the \emph{Weighted Inertia-Energy-Dissipation} method, introduced in the works of Lions \cite{Lions1965:art} and Oleinik \cite{Oleinik1964:art} (see also the paper of De Giorgi \cite{DeGiorgi1996:art} in the context of nonlinear wave equations). Later, it has been investigated by many authors: we quote the works of Akagi and Stefanelli \cite{AkagiStefanelli2016:art}, Mielke and Stefanelli \cite{MielkeStefanelli2011:art}, B\"ogelein et al. \cite{BogeleinEtAl2014:art,BogeleinEtAl2017:art} and the references therein. However, the variational techniques we use are inspired by the methods developed by Serra and Tilli in \cite{SerraTilli2012:art,SerraTilli2016:art} (see also the more recent \cite{AudritoSerraTilli2021:art}). We introduce below the main ideas in a rather informal way, and postpone the formal definitions and statements in subsequent sections.

We set 
\[
\Phi(u) := 2 \int_0^u\beta(v)\, \rd v,
\]
and, for every fixed $\vep \in (0,1)$, we introduce the functional
\begin{equation}\label{eq:Functional0}
\EE_\vep(W) = \int_0^\infty \frac{e^{- t/\varepsilon}}{\varepsilon} \bigg\{ \int_{\mathbb{R}_+^{N+1}} y^a \left( \vep|\partial_t W|^2 + |\nabla W|^2 \right)\rd X + \int_{\mathbb{R}^N \times\{0\}} \Phi(w) \dx \bigg\}\dt.
\end{equation}
If $\EE_\vep$ has a minimizer $U_\vep$ (or a minimizer pair $(U_\vep,u_\vep:=U_\vep|_{y=0})$) in some suitable space $\UU_0$ (see \eqref{eq:Ass2Data}) with $U_\vep|_{t=0} = U_0$, then $U_\vep$ satisfies
\begin{equation}\label{eq:PerProbEps}
\begin{cases}
-\varepsilon y^a\partial_{tt} U_\vep + y^a\partial_t U_\vep - \nabla\cdot (y^a \nabla U_\vep) = 0 \quad &\text{ in }  \mathbb{R}^{N+1}_+ \times (0,\infty) \\
-\partial_y^a U_\vep = - \beta(u_\vep)                               \quad &\text{ in } \mathbb{R}^N \times \{0\} \times (0,\infty) \\
U_\vep|_{t=0} = U_0     \quad &\text{ in } \mathbb{R}^{N+1}_+,
\end{cases}
\end{equation}
in the weak sense (see Lemma \ref{Lem:EulerEqMinFF}). The above problem is nothing more than \eqref{eq:ParReacDiff} with the extra term $-\varepsilon y^a\partial_{tt} U_\vep$: it is thus reasonable to conjecture that under suitable boundedness and compactness properties, one may pass to the limit as $\vep \to 0$ along some subsequence and obtain a limit weak solution $U$ to \eqref{eq:ParReacDiff}.

We stress that for each $\vep \in (0,1)$ the approximating problem \eqref{eq:PerProbEps} is \emph{elliptic} in space-time and the drift $y^a\partial_t U_\vep$ is a lower order term, while, as $\vep \to 0$, it degenerates along the time direction and, in the limit, the problem completely changes nature, becoming \emph{parabolic}.

As already mentioned, our main goal is to establish uniform estimates for families of minimizers of the functional $\EE_\vep$ and then pass to the limit by compactness. We will deal with two types of uniform bounds: global energy estimates and H\"older estimates. The former are obtained adapting the techniques of \cite{SerraTilli2012:art,SerraTilli2016:art}, while the latter will follow from a De Giorgi-Nash-Moser type result (\cite{DeGiorgi1957:art,Nash1958:art,Moser1964:art}) for weak solutions to 
\begin{equation}\label{eq:PerProbEpsFf}
\begin{cases}
-\varepsilon y^a\partial_{tt} W_\vep + y^a\partial_t W_\vep - \nabla\cdot (y^a \nabla W_\vep) = F_\vep \quad &\text{ in }  \mathbb{R}^{N+1}_+ \times \RR \\
-\partial_y^a W_\vep = f_\vep,                               \quad &\text{ in } \mathbb{R}^N \times \{0\} \times \RR, 
\end{cases}
\end{equation}
where $F_\vep$ and $f_\vep$ belong to suitable classes of spaces (see Appendix \ref{App:WeightedSpaces} and \eqref{eq:AssumptionsExponents}).

This is our main contribution: we prove \emph{parabolic} H\"older estimates ``up to $\{y=0\}$'' for weak solutions to problem \eqref{eq:PerProbEpsFf}, that we transfer to sequences of minimizers of \eqref{eq:Functional0} and, in turn, to the limit function $U$, as $\vep \to 0$. We anticipate that even though these H\"older estimates have a \emph{local} nature (we work directly with the local weak formulation of \eqref{eq:PerProbEpsFf}), we will need an extra compactness assumption guaranteed by the global energy estimates, and depending on the initial data $U_0$ (see Proposition \ref{prop:weaklimit}).

Since problem \eqref{eq:PerProbEpsFf} is elliptic for every $\vep > 0$ but becomes parabolic in the limit $\vep =0$, we cannot expect to prove uniform \emph{elliptic} H\"older estimates (i.e. elliptic in the $(X,t)$ variables), but ``only'' \emph{parabolic} ones. To do this, we will combine uniform local energy estimates, uniform local $L^2 \to L^\infty$ bounds and a uniform oscillation decay lemma. It is important to stress that the proofs of these intermediate steps are not just the mere adaptation of the parabolic theory (see for instance \cite{Moser1964:art,Vasseur2016:art}), but are tailored to the degeneracy of the problem along the time direction. We refer the reader to Subsection \ref{subsec:MainResults} for further discussions and connections with the existing literature.

Finally, it is important to mention that our strategy seems to be quite \emph{flexible} and different parabolic problems may be attacked with similar techniques. For instance, one may fix $s \in (0,1)$ and try to approximate weak solutions to 
\[
\partial_t u + (-\Delta)^s u = -\beta(u) 
\]
with a sequence of minimizers of 
\[
\tilde{\EE}_\vep(w) = \int_0^\infty \frac{e^{- t/\varepsilon}}{\varepsilon} \bigg\{ \vep \int_{\mathbb{R}^N}|\partial_t w|^2\dx  +  \int_{\mathbb{R}^N \times \mathbb{R}^N} \frac{[w(x,t) - w(z,t)]^2}{|x-z|^{N+2s}} \dx \rd z + \int_{\mathbb{R}^N} \Phi(w) \dx \bigg\}\dt,
\]
over a suitable functional space (notice that here we work in the purely nonlocal framework, without making use of the extension theory for the fractional laplacian). In view of \cite{AkagiStefanelli2016:art,SerraTilli2016:art}, the techniques we use to prove the energy estimates should easily be adapted to this setting too, whilst the general strategy we follow to obtain the H\"older bounds seems to be more difficult to repeat and must be adapted depending on the different nature of the problem.

%
%
%
%
\subsection{Functional Setting}
To simplify the notation, we work with the functional
\begin{equation}\label{eq:Functional}
\FF_\vep(U) = \int_0^\infty \frac{e^{- t/\varepsilon}}{\varepsilon} \bigg\{ \int_{\mathbb{R}^{N+1}} |y|^a \left( \vep|\partial_t U|^2 + |\nabla U|^2 \right)\rd X + \int_{\mathbb{R}^N \times\{0\}} \Phi(u) \dx \bigg\}\dt,
\end{equation}
and then we will transfer the information to the minimizers of \eqref{eq:Functional0}, using standard even reflections w.r.t. $y$ (see Lemma \ref{lem:BasicPropMin} and Remark \ref{rem:EvenPropMin}). Indeed, notice that $\FF_\vep$ is nothing more than $\EE_\vep$ but the integration is on the whole $\RR^{N+1}$ and, as always, $u := U|_{y=0}$.

We consider the space
\[
\UU := \bigcap_{R > 0} H^{1,a}(\mathbb{Q}_R^+), \qquad \mathbb{Q}_R^+ := \mathbb{B}_R \times (0,R^2),
\]
made of functions $U \in L^{2,a}(\mathbb{Q}_R^+)$ with weak derivatives $\partial_t U \in L^{2,a}(\mathbb{Q}_R^+)$, $\nabla U \in (L^{2,a}(\mathbb{Q}_R^+))^{N+1}$, for every $R > 0$ (the definitions of the $L^{p,a}$ spaces are given in Appendix \ref{App:WeightedSpaces}).
In particular, by \cite{Nekvinda1993:art}, each function $U \in \UU$ has a trace on the hyperplane $\{y=0\}$ and an ``initial'' trace we denote with
\[
u := U|_{y=0} \in H^{\frac{1-a}{2}}_{loc}(Q_\infty), \qquad U_0 := U|_{t=0} \in H^{\frac{1-a}{2}}_{loc}(\mathbb{R}^{N+1}),
\]
respectively ($Q_\infty := \RR^N\times\{0\}\times(0,\infty)$).

Since each term in \eqref{eq:Functional} is nonnegative, we will view $\FF_\vep$ as a functional defined on $\UU$ taking values in $[0,+\infty]$ and we will minimize it on the space $\UU$, subject to the initial condition
\[
U|_{t=0} = U_0, \qquad U_0 \in H^{1,a}(\mathbb{R}^{N+1}).
\]
The choice of $U_0 \in H^{1,a}(\mathbb{R}^{N+1})$ is quite important for our approach: we have $U_0 \in \UU$ (and thus $\UU$ is not empty) and, by \cite{Nekvinda1993:art} again, we also have that
\[
u_0 := U_0|_{y=0} \in H^{\frac{1-a}{2}}(\mathbb{R}^N)
\]
is well-defined. To prove our main estimates, it will be crucial to assume
\[
\mathcal{L}^N(\{ u_0 > 0 \}) < +\infty,
\]
where $\mathcal{L}^N$ denotes the $N$-dimensional Lebesgue measure. This is not a very restrictive assumption: in the majority of the applications the function $u_0$ is assumed to be smooth and compactly supported in $\mathbb{R}^N$. To simplify the notations, it is convenient to introduce the set of initial traces
\begin{equation}\label{eq:TraceSpace}
\mathcal{T}_0 := \big\{ U_0 \in H^{1,a}(\mathbb{R}^{N+1}): U_0 \not\equiv 0, \, \mathcal{L}^N(\{ u_0 > 0 \}) < +\infty  \big\},
\end{equation}
and the (non-empty) closed convex linear space
\begin{equation}\label{eq:Ass2Data}
\UU_0 := \{ U \in \UU: U|_{t=0} = U_0 \in \mathcal{T}_0 \}.
\end{equation}
Notice that the assumptions on the initial trace guarantee that any minimizer $U \in \UU_0$ is nontrivial.

%
%
%
%
\subsection{Main results}\label{subsec:MainResults} Our main result is the following theorem.
\begin{thm}\label{thm:MainIntro}
Let $N \geq 1$, $a \in (-1,1)$, $\beta \in L^\infty(\RR)$ satisfying \eqref{eq:Reaction} and $U_0 \in \mathcal{T}_0$ as in \eqref{eq:TraceSpace}. Then there exist $\alpha \in (0,1)$, a sequence $\vep_k \to 0^+$ and a sequence of minimizers $\{U_{\vep_k}\}_{k\in\NN}$ of \eqref{eq:Functional0} in $\UU_0$ depending only on $N$, $a$, $\|\beta\|_{L^\infty(\RR)}$ and $U_0$, such that for every open and bounded set $K \subset \RR^{N+1}\times(0,\infty)$, there exists $C > 0$ independent of $k$, such that    
\begin{equation}\label{eq:UnifHoldParIntro}
\|U_{\vep_k}\|_{C^{\alpha,\alpha/2}(K\cap\{y > 0\})} \leq C,
\end{equation}
for every $k \in \NN$.
\end{thm}
This is the first result concerning uniform H\"older bounds for minimizers of \eqref{eq:Functional0}: to the best of our knowledge, the existing literature treats exclusively uniform bounds of energy type (see the already mentioned 
\cite{AkagiStefanelli2016:art,AudritoSerraTilli2021:art,BogeleinEtAl2014:art,
BogeleinEtAl2017:art,MielkeStefanelli2011:art,SerraTilli2012:art,SerraTilli2016:art}). Theorem \ref{thm:MainIntro} has an interesting corollary, that we state after giving the definition of weak solutions to problem \eqref{eq:ParReacDiff}.

\begin{defn}\label{def:WeakSolReacDiffProb}
Let $N \geq 1$, $a \in (-1,1)$ and $\beta \in L^\infty(\RR)$ satisfying \eqref{eq:Reaction}. We say that the pair $(U,u)$ is a weak solution to \eqref{eq:ParReacDiff} if

\noindent $\bullet$ $U \in L_{loc}^2(0,\infty: H^{1,a}(\RR_+^{N+1}))$ with $\partial_t U \in L_{loc}^2(0,\infty:L^{2,a}(\RR_+^{N+1}))$.

\noindent $\bullet$ $u = U|_{y=0}$ and $U_0 = U|_{t=0}$ in the sense of traces.

\noindent $\bullet$ $U$ satisfies
\begin{equation}\label{eq:EulerParabolicIntro}
\int_0^\infty\int_{\RR_+^{N+1}} y^a (\partial_t U \eta + \nabla U \cdot\nabla\eta) \dX\rd t + \int_0^\infty\int_{\RR^N} \beta(u) \eta|_{y=0} \dx \rd t = 0,
\end{equation}
for every $\eta \in C_0^\infty(\RR^{N+1}\times(0,\infty))$.
\end{defn}
\begin{cor}\label{cor:MainIntro}
Let $N \geq 1$, $a \in (-1,1)$, $\beta \in L^\infty(\RR)$ satisfying \eqref{eq:Reaction} and $U_0 \in \mathcal{T}_0$ as in \eqref{eq:TraceSpace}. Then there exist $\alpha \in (0,1)$, a weak solution $(U,u)$ to \eqref{eq:ParReacDiff} satisfying
\[
U \in C^{\alpha,\alpha/2}_{loc}(\overline{\RR^{N+1}_+}\times(0,\infty)),
\]
a sequence $\vep_k \to 0^+$ and a sequence of minimizers $\{U_{\vep_k}\}_{k\in\NN}$ of \eqref{eq:Functional0} in $\UU_0$ such that, as $k \to + \infty$,    
\[
\begin{aligned}
&U_{\vep_k} \rightharpoonup U \quad \text{ weakly in } \UU \\
&U_{\vep_k} \to U \quad \text{ in } C_{loc}([0,\infty):L^{2,a}(\RR_+^{N+1})) \\
&U_{\vep_k} \to U \quad \text{ in } C^{\alpha,\alpha/2}_{loc}(\overline{\RR^{N+1}_+}\times(0,\infty)).
\end{aligned}
\]
\end{cor}
Some comments are in order. Our approach allows to treat both the existence and the H\"older regularity of weak solutions using the same approximating sequence, in contrast with the classical theory where the two issues are often unrelated. Indeed, the existence and H\"older regularity for weak solutions to \eqref{eq:ParReacDiff} can be proved separately using more classical methods. For the existence, we believe that the approximation scheme used in \cite[Section 2]{HyderEtAl2021:art} can be easily adapted to our framework. It is also important to notice that both methods allow to construct weak solutions with bounded $H^1$ energy (locally in time), depending on the $H^1$ energy of the initial data: this automatically excludes ``pathological'' solutions such as Jones' solution \cite{Jones1977:art}, in the case $a = 0$ and $\beta \equiv 0$.

On the other hand, the results concerning the H\"older regularity of weak solutions to parabolic weighted equations like \eqref{eq:ParReacDiff} are obtained working in the pure parabolic setting, and are based on the validity of some Harnack inequality in the spirit of Moser \cite{Moser1964:art}, see e.g. \cite{BanGarofalo2017:art,BonSimonov2019:art,ChiarenzaSerapioni1985:art,
GutierrezWheeden1991:art}. In our framework neither an \emph{elliptic} nor a \emph{parabolic} Harnack inequality for weak solutions of \eqref{eq:PerProbEps} can hold, with  constants independent of $\vep \in (0,1)$. This is due to the different nature of the \emph{elliptic} and \emph{parabolic} Harnack inequality (see \cite[Theorem 1]{Moser1961:art} and \cite[Theorem 1]{Moser1964:art}, respectively) and the drastic loss of ellipticity in the limit $\vep \to 0$ (see also the counter-example in \cite[pp. 103]{Moser1964:art} in the case $a=0$ and $\beta \equiv 0$). On the contrary, \emph{parabolic} H\"older regularity is preserved under the limit.

We end this paragraph with a few words about the reaction function $\beta$. It is worth to mention that other kind of reactions can be considered but, for simplicity, we decided to focus on the class defined in \eqref{eq:Reaction}: the positivity of $\beta$ guarantees that the functional $\FF_\vep$ is nonnegative (uniformly in $\vep$), while the fact that $\Phi(u) \leq \chi_{\{u > 0\}}$ allows us to prove the crucial level estimate  \eqref{eq:LevelEst}. The assumption $\text{supp} \beta = [0,1]$ gives some additional properties such as the weak maximum principle stated in Lemma \ref{lem:BasicPropMin}. For what concerns the uniform H\"older bounds, the only information we use is that $\beta \in L^\infty(\RR)$.

%
%
%
%
%
%
%
\subsection{Structure of the paper} 
The paper is organized as follows.

In Section \ref{sec:GlobalUnifEnEst} we prove the existence of minimizers of $\FF_\vep$ in $\UU_0$ (for every fixed $\vep \in (0,1)$) and we establish the main global uniform energy estimates \eqref{eq:EnBound1} and \eqref{eq:EnBound2}, which play a key role in the proof of Proposition \ref{prop:weaklimit}. This is the first main step in our analysis: we show the existence of a sequence of minimizers $U_{\vep_j}$ weakly converging to some function $U$ which is also a weak solution to \eqref{eq:ParReacDiff}.

In Section \ref{Sec:L2LinfEst} we prove a $L^{2,a} \to L^\infty$ local uniform bound for weak solutions to \eqref{eq:PerProbEpsFf} (see Proposition \ref{prop:L2LinfEstimate}). The main difficulty here is to derive a uniform energy estimate: since problem \eqref{eq:PerProbEpsFf} is elliptic for every fixed $\vep > 0$ but becomes parabolic in the limit as $\vep \to 0$, the best we can expect is to obtain a uniform energy estimate of parabolic type. We anticipate that the standard parabolic techniques do not work in this framework (see Remark \ref{rem:ParEE}) and new methods that exploit the degeneracy of the equation are used.

In Section \ref{Sec:OscDecay} we show Proposition \ref{thm:CalphaBound}: under the additional compactness assumption \eqref{eq:ConvergenceOscDecay}, there is a sequence of weak solutions to \eqref{eq:PerProbEpsFf} having locally bounded $C^{\alpha,\alpha/2}$ seminorm. As explained in Remark \ref{rem:CompactnessPar}, this compactness assumption is required in order to prove a parabolic version of the so-called ``De Giorgi isoperimetric lemma'' (cf. Lemma \ref{lem:IsoperimetricIneqElliptic}).

In Section \ref{Sec:FinalProofs} we show Theorem \ref{thm:MainIntro} and Corollary \ref{cor:MainIntro}: the proofs are easy consequences of Proposition \ref{prop:weaklimit}, Proposition \ref{thm:CalphaBound} and a standard covering argument.

Finally, in  the appendices (Appendix \ref{App:WeightedSpaces}, \ref{App:TechnicalLemma} and \ref{App:Notations}) we recall some technical tools and results we exploit through the paper, and the full list of notations.


%
%
%
%
\section{Global uniform energy estimates}\label{sec:GlobalUnifEnEst}
This section is devoted to the proof of the following proposition.
\begin{prop}\label{prop:weaklimit} 
Let $\{U_\vep\}_{\vep \in (0,1)} \in \UU_0$ be a family of minimizers of $\FF_\vep$. Then there exist $U \in \UU_0$ and a sequence $\vep_j \to 0$ such that if $u = U|_{y=0}$, then 
\begin{equation}\label{eq:WeakLimitProp}
\begin{aligned}
&U_{\vep_j} \rightharpoonup U \quad \text{ weakly in } \UU \\
&U_{\vep_j} \to U \quad \text{ in } C_{loc}([0,\infty):L^{2,a}(\RR^{N+1})) \\
&u_{\vep_j} \to u \quad \;\text{ in } L^2_{loc}(\RR^N\times(0,\infty)),
\end{aligned}
\end{equation}
and, furthermore, the pair $(U,u)$ satisfies
\begin{equation}\label{eq:EulerParabolic}
\int_{\mathbb{Q}_\infty} |y|^a (\partial_t U \eta + \nabla U \cdot\nabla\eta) \dX\rd t + \int_{Q_\infty} \beta(u) \eta|_{y=0} \dx \rd t = 0,
\end{equation}
for every $\eta \in C_0^\infty(\mathbb{Q}_\infty)$. In particular, $(U,u)$ is a weak solution to \eqref{eq:ParReacDiff}.
\end{prop}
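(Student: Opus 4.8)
The plan is a compactness-plus-passage-to-the-limit argument; the only non-routine ingredients are the weight-free uniform energy bounds \eqref{eq:EnBound1}--\eqref{eq:EnBound2} proved earlier in this section and the weighted compactness/trace tools recalled in Appendix~\ref{App:WeightedSpaces}. First I would record that \eqref{eq:EnBound1}--\eqref{eq:EnBound2} furnish, for every $T>0$, constants independent of $\vep\in(0,1)$ bounding $\int_0^\infty\!\int_{\RR^{N+1}}|y|^a|\partial_t U_\vep|^2\dX\dt$ and $\int_0^T\!\int_{\RR^{N+1}}|y|^a|\nabla U_\vep|^2\dX\dt$. Since $U_\vep|_{t=0}=U_0\in H^{1,a}(\RR^{N+1})$, writing $U_\vep(\cdot,t)=U_0+\int_0^t\partial_t U_\vep(\cdot,s)\ds$ and using Cauchy--Schwarz in $s$ gives $\sup_{t\in[0,T]}\|U_\vep(\cdot,t)\|_{L^{2,a}(\RR^{N+1})}\le C(T)$, so $\{U_\vep\}_{\vep\in(0,1)}$ is bounded in $H^{1,a}(\QQ_R^+)$ for every $R>0$. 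By reflexivity and a diagonal extraction over $R\in\NN$ I would then obtain $\vep_j\to0^+$ and $U\in\UU$ with $U_{\vep_j}\rightharpoonup U$ weakly in $H^{1,a}(\QQ_R^+)$ for all $R$, i.e. weakly in $\UU$; weak lower semicontinuity of the $L^{2,a}$-norms upgrades the above bounds to $\partial_t U\in L^2(0,\infty:L^{2,a}(\RR^{N+1}))$ and $U\in L^2_{loc}(0,\infty:H^{1,a}(\RR^{N+1}))$.

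Next I would promote this to the strong convergences in \eqref{eq:WeakLimitProp}. On each cylinder $\QQ_R$ the family $\{U_{\vep_j}\}$ is bounded in $L^2(-R^2,R^2:H^{1,a}(\BB_R))$ with $\{\partial_t U_{\vep_j}\}$ bounded in $L^2(-R^2,R^2:L^{2,a}(\BB_R))$; since the weight is in the Muckenhoupt $A_2$-class the embedding $H^{1,a}(\BB_R)\hookrightarrow\hookrightarrow L^{2,a}(\BB_R)$ is compact (Appendix~\ref{App:WeightedSpaces}), so an Aubin--Lions--Simon type argument gives relative compactness of $\{U_{\vep_j}\}$ in $L^{2,a}(\QQ_R)$, while $\|U_{\vep_j}(\cdot,t)-U_{\vep_j}(\cdot,s)\|_{L^{2,a}(\RR^{N+1})}\le C|t-s|^{1/2}$ (from the uniform $L^2$-in-time bound on $\partial_t U_{\vep_j}$) yields equicontinuity in $t$ into $L^{2,a}$; Arzel\`a--Ascoli and one more diagonal extraction then give $U_{\vep_j}\to U$ in $C_{loc}([0,\infty):L^{2,a}(\RR^{N+1}))$, whence in particular $U|_{t=0}=U_0\in\mathcal{T}_0$ and $U\in\UU_0$. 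For the traces I would use the small-parameter weighted trace inequality — a consequence of Nekvinda's trace theorem \cite{Nekvinda1993:art} — stating that for each $R>0$ and $\kappa>0$ there is $C_\kappa$ with $\int_{B_R}|v|_{y=0}|^2\dx\le\kappa\int_{\BB_R}|y|^a|\nabla v|^2\dX+C_\kappa\int_{\BB_R}|y|^a|v|^2\dX$; applying it to $v=U_{\vep_j}-U$ at fixed time, integrating over $t\in(t_1,t_2)$ with $0<t_1<t_2$, bounding the gradient term by $\kappa\,C$ uniformly in $j$ and sending $j\to\infty$ and then $\kappa\to0^+$ produces $u_{\vep_j}\to u:=U|_{y=0}$ in $L^2_{loc}(\RR^N\times(0,\infty))$.

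To identify the limit equation I would invoke Lemma~\ref{Lem:EulerEqMinFF}, by which each $U_{\vep_j}$ is a weak solution of \eqref{eq:PerProbEps}; equivalently \eqref{eq:WeakEquation} holds with $\vep=\vep_j$, $F\equiv0$ and $f=-\beta(u_{\vep_j})\in L^\infty$ (since $\|\beta\|_{L^\infty(\RR)}<\infty$), that is
\[
\int_{\QQ_\infty}|y|^a\big(\vep_j\,\partial_t U_{\vep_j}\,\partial_t\eta+\partial_t U_{\vep_j}\,\eta+\nabla U_{\vep_j}\cdot\nabla\eta\big)\dX\dt+\int_{Q_\infty}\beta(u_{\vep_j})\,\eta|_{y=0}\dx\dt=0
\]
for every $\eta\in C_0^\infty(\QQ_\infty)$. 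I would then pass to the limit term by term: the first term is $O(\vep_j)$ by Cauchy--Schwarz and the uniform $L^{2,a}$ bound on $\partial_t U_{\vep_j}$ over $\supp\eta$, hence vanishes; the second and third converge by the weak $L^{2,a}_{loc}$ convergence of $\partial_t U_{\vep_j}$ and $\nabla U_{\vep_j}$ tested against the bounded compactly supported functions $|y|^a\eta$ and $|y|^a\nabla\eta$; and, after extracting a further subsequence so that $u_{\vep_j}\to u$ a.e., dominated convergence (using $|\beta(u_{\vep_j})|\le\|\beta\|_{L^\infty(\RR)}$ and continuity of $\beta$) handles the boundary term. This gives \eqref{eq:EulerParabolic}; together with the regularity of $U$ from the first step and the evenness in $y$ inherited from the minimizers (Lemma~\ref{lem:BasicPropMin}, Remark~\ref{rem:EvenPropMin}), the pair $(U,u)$ meets all conditions of Definition~\ref{def:WeakSolReacDiffProb}, so it is a weak solution to \eqref{eq:ParReacDiff}.

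I expect the main obstacle to lie not in this proposition but in the estimates \eqref{eq:EnBound1}--\eqref{eq:EnBound2} it rests on: because $e^{-t/\vep}/\vep$ concentrates at $t=0$, a naive comparison of energies only bounds $\int_0^\infty e^{-t/\vep}\int_{\RR^{N+1}}|y|^a|\partial_t U_\vep|^2$, which is worthless on $[0,T]$ as $\vep\to0$, so the weight-free control of $\partial_t U_\vep$ — needed both for the weak $H^{1,a}$-compactness and to kill the $\vep_j\,\partial_t U_{\vep_j}\,\partial_t\eta$ term — must come from the more delicate Serra--Tilli-type choice of test functions in the Euler--Lagrange equation. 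Within the present argument the only mildly technical points are the weighted Aubin--Lions compactness and the small-parameter trace inequality; everything else is routine.
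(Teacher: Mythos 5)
Your proposal is correct and follows essentially the same route as the paper's proof: the uniform bounds \eqref{eq:EnBound1}--\eqref{eq:EnBound2} give equiboundedness in $H^{1,a}(\QQ_R^+)$, a diagonal extraction yields the weak limit, an Aubin--Lions/Simon-type compactness argument (the paper simply cites \cite[Corollary 8]{Simon1987:art}, while you reprove it via the $|t-s|^{1/2}$-equicontinuity — note that your appeal to Arzel\`a--Ascoli should be completed by the standard time-averaging trick or by Simon's result, since pointwise-in-$t$ relative compactness in $L^{2,a}$ is not directly available) gives the $C_{loc}$--$L^{2,a}$ convergence, and one passes to the limit in \eqref{eq:EulerEqMinFF}, the $\vep_j$-term vanishing by \eqref{eq:EnBound1}. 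The only genuine (and harmless) variation is your use of the trace inequality \eqref{eq:TraceIneqWithEps} combined with the strong interior convergence to get $u_{\vep_j}\to u$ in $L^2_{loc}$, where the paper instead invokes compactness of the trace embedding into $H^{\frac{1-a}{2}}$ as in Lemma \ref{lem:ExistenceMin}.
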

Before addressing to the proof of the above statement, we show some basic properties of minimizers of the functional $\FF_\vep$.
%
%
\subsection{Existence and basic properties of minimizers}
We begin with the Euler-Lagrange equations.
\begin{lem}\label{Lem:EulerEqMinFF}
Fix $\vep \in (0,1)$ and let $U_\vep \in \UU_0$ be a minimizer of $\FF_\vep$. Then
\begin{equation}\label{eq:EulerEqMinFF}
\varepsilon \int_{\mathbb{Q}_\infty} |y|^a \partial_t U_\vep \partial_t \eta \dX\rd t + \int_{\mathbb{Q}_\infty} |y|^a (\partial_t U_\vep \eta + \nabla U_\vep \cdot\nabla\eta) \dX\rd t + \int_{Q_\infty} \beta(u_\vep) \eta|_{y=0} \dx \rd t = 0,
\end{equation}
for every $\eta \in C_0^\infty(\mathbb{Q}_\infty)$. 
\end{lem}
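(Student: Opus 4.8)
The plan is to derive \eqref{eq:EulerEqMinFF} by the standard first-variation argument: since $U_\vep$ minimizes $\FF_\vep$ over the affine space $\UU_0$, perturbing by an admissible variation and differentiating at the minimum must give a vanishing derivative. First I would fix $\eta \in C_0^\infty(\mathbb{Q}_\infty)$ and note that, because $\eta$ vanishes near $\{t=0\}$ (its support is a compact subset of $\RR^{N+1}\times(0,\infty)$), the perturbed function $U_\vep + \lambda \eta$ still lies in $\UU_0$ for every $\lambda \in \RR$ — the initial condition $U|_{t=0}=U_0$ is preserved, and $U_\vep+\lambda\eta \in \UU$ since $\eta$ is smooth with compact support, hence belongs to $H^{1,a}(\mathbb{Q}_R^+)$ for every $R$. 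Then I would introduce $g(\lambda) := \FF_\vep(U_\vep + \lambda\eta)$, which by minimality satisfies $g(\lambda) \geq g(0)$ for all $\lambda$, so $g'(0) = 0$ provided $g$ is differentiable at $0$.

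Next I would expand $g(\lambda)$. Writing out \eqref{eq:Functional}, the quadratic part
\[
\int_0^\infty \frac{e^{-t/\vep}}{\vep}\int_{\RR^{N+1}} |y|^a\big(\vep|\partial_t(U_\vep+\lambda\eta)|^2 + |\nabla(U_\vep+\lambda\eta)|^2\big)\dX\dt
\]
is a polynomial of degree two in $\lambda$ with coefficients that are finite integrals (here one uses $\FF_\vep(U_\vep) < +\infty$, which holds since $U_\vep$ is a minimizer and $\UU_0$ contains $U_0$ with $\FF_\vep(U_0)<\infty$, together with the compact support of $\eta$ and the fact that the weight $|y|^a$ is locally integrable, being Muckenhoupt $A_2$). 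Its $\lambda$-derivative at $0$ is
\[
2\int_0^\infty \frac{e^{-t/\vep}}{\vep}\int_{\RR^{N+1}} |y|^a\big(\vep\,\partial_t U_\vep\,\partial_t\eta + \nabla U_\vep\cdot\nabla\eta\big)\dX\dt.
\]
For the reaction part $\int_0^\infty \frac{e^{-t/\vep}}{\vep}\int_{\RR^N\times\{0\}} \Phi(u_\vep + \lambda\eta|_{y=0})\dx\dt$, I would use that $\Phi(u) = 2\int_0^u\beta$ is $C^1$ with $\Phi' = 2\beta$ bounded (by \eqref{eq:Reaction}, $\beta \in C(\RR)$ with compact support, hence $\|\beta\|_{L^\infty}<\infty$), so $\Phi$ is Lipschitz; the trace $\eta|_{y=0}$ has compact support in $Q_\infty$, so the inner integrals are over a fixed bounded set. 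Differentiating under the integral sign — justified by dominated convergence using the uniform Lipschitz bound on $\Phi$ and the $L^1$ bound on the difference quotients over the compact support of $\eta|_{y=0}$ — gives derivative
\[
2\int_0^\infty \frac{e^{-t/\vep}}{\vep}\int_{\RR^N\times\{0\}} \beta(u_\vep)\,\eta|_{y=0}\dx\dt.
\]
Setting $g'(0)=0$, dividing by $2$, and absorbing the weight $\frac{e^{-t/\vep}}{\vep}$ into a redefined test function $\tilde\eta := \frac{e^{-t/\vep}}{\vep}\eta$ (which is again in $C_0^\infty(\mathbb{Q}_\infty)$, and the correspondence $\eta \leftrightarrow \tilde\eta$ is a bijection on $C_0^\infty(\mathbb{Q}_\infty)$ since $\frac{e^{-t/\vep}}{\vep}$ is smooth and nonvanishing) — with the extra term from $\partial_t$ hitting the exponential handled by noting $\partial_t\big(\frac{e^{-t/\vep}}{\vep}\eta\big) = \frac{e^{-t/\vep}}{\vep}(\partial_t\eta - \frac1\vep\eta)$, so that the $\vep\,\partial_t U_\vep \partial_t\tilde\eta$ term produces exactly the $\vep\int|y|^a\partial_t U_\vep\partial_t\eta$ term plus $\int|y|^a\partial_t U_\vep(-\eta)$, which combines with the drift term — yields precisely \eqref{eq:EulerEqMinFF}. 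I would carry out this rewriting carefully rather than sketch it, since it is where the exponential weight disappears.

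The main obstacle I anticipate is not conceptual but bookkeeping: ensuring all integrals are finite and all differentiations under the integral (in $t$, in $X$, and in $\lambda$) are legitimate, given that the weight $|y|^a$ is singular or degenerate at $\{y=0\}$ and the time integral runs to $+\infty$ with the exponential weight. The finiteness of $\FF_\vep(U_\vep)$ (from minimality) together with Cauchy–Schwarz controls the cross terms $\int |y|^a\partial_t U_\vep\partial_t\eta$ and $\int|y|^a\nabla U_\vep\cdot\nabla\eta$; the compact support of $\eta$ confines everything to a cylinder $\mathbb{Q}_R^+$ where $U_\vep \in H^{1,a}$ by definition of $\UU$, and on compact time intervals $[t_1,t_2]\subset(0,\infty)$ the factor $e^{-t/\vep}/\vep$ is bounded above and below. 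The trace term requires the trace theorem of \cite{Nekvinda1993:art} to make sense of $u_\vep = U_\vep|_{y=0}$ and to control $\eta|_{y=0}$, but since $\eta$ is smooth this is immediate. Once these routine estimates are in place, the argument closes.
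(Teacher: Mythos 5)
Your proposal is correct and takes essentially the same route as the paper's own proof: a first-variation argument ($g(\lambda)=\FF_\vep(U_\vep+\lambda\eta)$, $g'(0)=0$, with the quadratic terms expanded exactly and the $\Phi$-term handled by dominated convergence using the boundedness of $\beta=\tfrac12\Phi'$ and the compact support of $\eta|_{y=0}$), followed by eliminating the exponential weight through a change of test function. The paper substitutes $\varphi=e^{t/\vep}\eta$ into the first-variation identity, while you use the equivalent bijection $\eta\mapsto\tfrac{e^{-t/\vep}}{\vep}\eta$; the bookkeeping of the extra drift term $\partial_t U_\vep\,\eta$ works out as you describe, so there is no gap.
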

\begin{proof} Fix $\varepsilon > 0$, let $\varphi \in C_0^\infty(\mathbb{Q}_\infty)$, $h \not= 0$, and assume that $U:= U_\vep \in \UU_0$ is a minimizer of $\FF_\vep$. It is not difficult to compute
\[
\begin{aligned}
\frac{\FF_\vep(U + h\varphi) - \FF_\vep(U)}{h} &= 2\int_0^\infty \frac{e^{-t/\varepsilon}}{\varepsilon} \int_{\mathbb{R}^{N+1}} |y|^a \left( \varepsilon \partial_t U \partial_t\varphi + \nabla U \cdot\nabla\varphi \right)\rd X \rd t \\
& \quad + \int_0^\infty \frac{e^{-t/\varepsilon}}{\varepsilon} \int_{\mathbb{R}^N} \frac{\Phi(u+h\varphi|_{y=0}) - \Phi(u)}{h} \dx \rd t \\
& \quad + h \int_0^\infty \frac{e^{-t/\varepsilon}}{\varepsilon} \int_{\mathbb{R}^{N+1}} |y|^a \left( \varepsilon |\partial_t\varphi|^2 + |\nabla\varphi|^2 \right)\rd X \rd t.
\end{aligned}
\]
Notice that, since $\varphi \in C_0^\infty(\mathbb{Q}_\infty)$, the last integral converges to zero as $h \to 0$, while
\[
\frac{\Phi(u+h\varphi|_{y=0}) - \Phi(u)}{h} \to \Phi'(u)\varphi|_{y=0} = 2\beta(u)\varphi|_{y=0} \quad \text{ a.e. in } Q_\infty,
\]
as $h \to 0$. Consequently, using that $\beta$ is bounded, $\varphi|_{y=0}$ is compactly supported and the minimality of $U$, we can pass to the limit as $h \to 0$, to deduce
\[
\int_0^\infty e^{-t/\varepsilon} \bigg\{ \int_{\mathbb{R}^{N+1}} |y|^a \left( \varepsilon \partial_t U \partial_t\varphi + \nabla U \cdot \nabla\varphi \right)\rd X + \int_{\mathbb{R}^N} \beta(u)\varphi|_{y=0}\dx  \bigg\} \rd t = 0.
\]
Now, $\eta \in C_0^\infty(\mathbb{Q}_\infty)$ and take $\varphi = e^{t/\varepsilon} \eta$. Noticing  $\partial_t\varphi = e^{t/\varepsilon}\left(\tfrac{1}{\varepsilon} \eta + \partial_t\eta \right)$ and rearranging terms, \eqref{eq:EulerEqMinFF} follows.
\end{proof}
Now, we show that for every $\vep \in (0,1)$ the functional $\FF_\vep$ has a minimizer in $\UU_0$. To simplify the notations, we introduce the functional
\begin{equation}\label{eq:ResFunctional}
\JJ_\vep(V) := \int_0^\infty e^{- t} \bigg\{ \int_{\mathbb{R}^{N+1}} |y|^a \left( |\partial_t V|^2 + \vep|\nabla V|^2 \right)\rd X + \vep \int_{\mathbb{R}^N \times\{0\}} \Phi(v) \dx \bigg\}\dt ,
\end{equation}
defined for $V \in \UU$, where $v = V|_{y=0}$. This functional is related to \eqref{eq:Functional} through the following relations
\begin{equation}\label{eq:RelFuncFJ}
\FF_\vep(U) = \frac{1}{\varepsilon} \JJ_\vep(V), \qquad V(X,t) = U(X,\varepsilon t).
\end{equation}
Since $V_0 := V|_{t=0} = U|_{t=0}$ (and so $v_0 := V_0|_{y=0} = u_0$) and $\UU_0$ is convex and invariant under time transformations, the minimization of $\JJ_\vep$ on $\UU_0$ is equivalent to the minimization of $\FF_\vep$ on the same space. In other words, $U \in \UU_0$ is a minimizer of $\FF_\vep$ if and only if $V \in \UU_0$ is a minimizer of $\JJ_\vep$.
\begin{lem}\label{lem:ExistenceMin}
For every $\varepsilon \in (0,1)$, the functional $\JJ_\vep$ defined in \eqref{eq:ResFunctional} has a minimizer in $\UU_0$.

Further, there exists a constant $C > 0$ depending only on $N$, $a$ and $V_0$ such that for every family $\{V_\vep\}_{\vep \in (0,1)} \in \UU_0$ of minimizers of $\JJ_\vep$, we have
\begin{equation}\label{eq:LevelEst}
\JJ_\vep(V_\vep) \leq C \varepsilon.
\end{equation}
\end{lem}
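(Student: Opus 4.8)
The plan is to prove Lemma \ref{lem:ExistenceMin} by the direct method of the calculus of variations for the existence part, and then by a competitor comparison argument for the quantitative bound \eqref{eq:LevelEst}.

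\textbf{Existence of a minimizer.} First I would show that the infimum $m_\vep := \inf_{V \in \UU_0} \JJ_\vep(V)$ is finite: since $V_0 \in \mathcal{T}_0$, the function $V(X,t) := V_0(X)$ (constant in time) lies in $\UU_0$, has $\partial_t V \equiv 0$, and $v_0 = u_0$ satisfies $\mathcal{L}^N(\{u_0 > 0\}) < +\infty$ together with $\Phi(u_0) = 2\int_0^{u_0}\beta \le \|\beta\|_{L^\infty}(2u_0 \wedge \ldots)$, but more simply $0 \le \Phi \le 1$ on the relevant range (since $\int_0^1\beta = 1/2$ so $\Phi \le 1$ everywhere as $\beta$ is supported in $[0,1]$), hence $\int_{\RR^N}\Phi(u_0)\dx \le \mathcal{L}^N(\{u_0>0\}) < \infty$. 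Using $V_0 \in H^{1,a}(\RR^{N+1})$, all three terms are finite and $m_\vep < \infty$. Then I take a minimizing sequence $\{V_n\} \subset \UU_0$ with $\JJ_\vep(V_n) \to m_\vep$; the bound $\JJ_\vep(V_n) \le m_\vep + 1$ gives uniform control (for each fixed $\vep$) of $\int_0^\infty e^{-t}\int_{\RR^{N+1}}|y|^a(|\partial_t V_n|^2 + \vep|\nabla V_n|^2)\dX\dt$, which together with the initial condition $V_n|_{t=0} = V_0$ and a Poincaré-type / fundamental-theorem-of-calculus estimate in $t$ yields that $\{V_n\}$ is bounded in $H^{1,a}(\QQ_R^+)$ for every $R > 0$ (the weight $e^{-t}$ is bounded below on each bounded time interval, so local-in-time weighted $H^1$ bounds follow). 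Extracting a subsequence converging weakly in each $H^{1,a}(\QQ_R^+)$ (diagonal argument) to some $V \in \UU$, I would check $V|_{t=0} = V_0$ via continuity of the trace operator under weak convergence, so $V \in \UU_0$. Finally, lower semicontinuity: the quadratic gradient/time-derivative terms are weakly lower semicontinuous by convexity (and Fatou in $t$ with the weight $e^{-t}$), while for the reaction term I use the compact trace embedding $H^{1,a}(\QQ_R^+) \hookrightarrow L^2(Q_R)$ (from \cite{Nekvinda1993:art} and compactness) to get $v_n \to v$ strongly in $L^2_{loc}$, hence a.e. along a further subsequence, so $\int\Phi(v_n) \to \int\Phi(v)$ by dominated convergence ($0 \le \Phi \le 1$, supported where $v > 0$ — here one must be a little careful, but the uniform bound on $\mathcal{L}^N(\{v_n > 0\})$ is not automatic; instead I would use $\Phi(v_n) \to \Phi(v)$ a.e. and Fatou for the lower bound). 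Combining, $\JJ_\vep(V) \le \liminf \JJ_\vep(V_n) = m_\vep$, so $V$ is a minimizer.

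\textbf{The bound $\JJ_\vep(V_\vep) \le C\vep$.} This is the quantitative heart of the lemma and, via the scaling relation \eqref{eq:RelFuncFJ}, is exactly the statement that $\FF_\vep(U_\vep) = \tfrac1\vep\JJ_\vep(V_\vep) \le C$ uniformly in $\vep$. The strategy, following Serra--Tilli \cite{SerraTilli2012:art,SerraTilli2016:art}, is to exhibit an explicit competitor $\overline V \in \UU_0$ with $\JJ_\vep(\overline V) \le C\vep$ and invoke minimality. The natural choice is a time-rescaled profile: take $\overline V(X,t) := Z(X, \vep t)$ where $Z$ solves (or approximately solves) the limiting parabolic problem, or even more simply $\overline V(X,t) := G(\vep t) \ast$-type object, but the cleanest is to let $\overline V(X,t)$ interpolate from $V_0$ to the solution of the $\vep$-independent heat flow with the weight. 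Concretely I would set $\overline V(X,t) = V_0(X)$ for small $t$ and then let it evolve; the key computations are: $\int_0^\infty e^{-t}\int |y|^a |\partial_t \overline V|^2 \sim \vep \cdot (\text{energy of the limit flow})$ because $\partial_t \overline V = \vep (\partial_s Z)(X,\vep t)$ contributes $\vep^2$ but the time integral $\int_0^\infty e^{-t}\dt$ against a function of $\vep t$ rescales by $\vep^{-1}$, netting one power of $\vep$; the term $\vep\int e^{-t}\int |y|^a|\nabla \overline V|^2$ is $O(\vep)$ since $\int_0^\infty e^{-t}\int|y|^a|\nabla Z(\cdot,\vep t)|^2\dt = \vep^{-1}\int_0^\infty e^{-s/\vep}\int|y|^a|\nabla Z(\cdot,s)|^2\ds \le \vep^{-1}\cdot\vep\cdot\sup_s(\ldots)$ — wait, this needs the Dirichlet energy of $Z$ to be integrable in $s$, which holds for the heat semigroup applied to $V_0 \in H^{1,a}$ (energy decreases and is integrable because it decays); and $\vep\int_0^\infty e^{-t}\int_{\RR^N}\Phi(\overline v)\dx\dt \le \vep\int_0^\infty e^{-t}\dt \cdot \sup_t\int\Phi(\overline v) \le \vep\,\mathcal{L}^N(\{u_0>0\}) = O(\vep)$ provided the flow does not increase the measure of the positivity set too much — here again one uses $0 \le \Phi \le \mathbf{1}_{\{\cdot > 0\}}$ and a crude bound. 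The constant $C$ then depends only on $N$, $a$, and $V_0$ (through $\|V_0\|_{H^{1,a}}$ and $\mathcal{L}^N(\{u_0 > 0\})$), as claimed.

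\textbf{Main obstacle.} The delicate point is constructing the competitor $\overline V$ and controlling the reaction term $\vep\int_0^\infty e^{-t}\int_{\RR^N}\Phi(\overline v)\dx\dt$: one needs the positivity set $\{\overline v(\cdot,t) > 0\}$ to stay of finite (uniformly bounded) measure, or at least to grow slowly enough, as $t$ ranges over $(0,\infty)$. For the linear weighted heat flow this is not obvious since solutions become instantly positive everywhere. The resolution — and this is presumably why the specific form of $\Phi$ and the combustion structure \eqref{eq:Reaction} matter — is either to use a competitor that is genuinely compactly supported (e.g. a cutoff of the flow, absorbing the extra Dirichlet energy of the cutoff into the $O(\vep)$ budget since it is weighted by $\vep$), or to exploit $\Phi \le 1$ globally and accept a bound $\int\Phi(\overline v) \le \mathcal{L}^N(\{\overline v > 0\})$ which may be infinite — hence the cutoff route seems forced. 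I expect the actual proof uses $\overline V(X,t) := \zeta(X) V_0(X) \cdot$(something) or a parabolic solve with compactly supported data approximating $V_0$, chosen so that every term is manifestly $O(\vep)$. Reconstructing the precise competitor is where most of the work lies; the direct method part is routine.
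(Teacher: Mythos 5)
Your existence argument (direct method, local $H^{1,a}$ bounds from the minimizing sequence, weak convergence plus a diagonal extraction, strong $L^2$ convergence of traces, a.e.\ convergence and Fatou for the reaction term) is essentially the paper's proof of the first half of the lemma. The gap is in the second half: you never actually prove \eqref{eq:LevelEst}. Your proposed competitor is a time-rescaled (weighted) heat flow, and you yourself identify the fatal issue — the positivity set of the flow is instantly all of $\RR^N$, so $\int_{\RR^N}\Phi(\overline v)\,\rd x$ is not controlled by $\mathcal{L}^N(\{\overline v>0\})$, and the cutoff repair you gesture at is never carried out. As written, the quantitative heart of the lemma remains open in your proposal.

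The irony is that the fix is already contained in your own finiteness computation: the static competitor $V(X,t):=V_0(X)$ does the whole job, because in $\JJ_\vep$ (unlike in $\FF_\vep$) \emph{both} the Dirichlet term and the reaction term carry an explicit factor $\vep$, while the only unweighted term, $\int_0^\infty e^{-t}\int|y|^a|\partial_t V|^2$, vanishes identically for a time-independent competitor. Hence
\[
\JJ_\vep(V_0)=\vep\int_{\RR^{N+1}}|y|^a|\nabla V_0|^2\,\rd X+\vep\int_{\RR^N}\Phi(v_0)\,\rd x
\le \vep\,\|V_0\|_{H^{1,a}(\RR^{N+1})}^2+\vep\,\mathcal{L}^N(\{v_0>0\})\le C\vep,
\]
using $0\le\Phi\le 1$ and the hypothesis $V_0\in\mathcal{T}_0$; minimality then gives $\JJ_\vep(V_\vep)\le\JJ_\vep(V_0)\le C\vep$ for every minimizer, with $C$ depending only on $N$, $a$ and $V_0$. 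This is exactly the paper's argument: no parabolic solve, no rescaled profile, no cutoff is needed — the nontrivial dynamics only enter later, in Lemma \ref{Lemma:DerivEn} and Corollary \ref{cor:GlobalEnEstV}, not in the level estimate. You tracked the finiteness of the three terms but not their $\vep$-prefactors, which is what sent you down the unnecessary (and unclosed) competitor construction.
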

\begin{proof}
Let $V = V_\vep$. First, we have $\JJ_\vep \not \equiv +\infty$ on $\UU_0$. Indeed, $V_0 \in \UU_0$ by definition and, further,
\[
\begin{aligned}
\JJ_\vep(V_0) & = \int_0^\infty e^{-t} \bigg\{ \int_{\mathbb{R}^{N+1}} |y|^a \left(|\partial_t V_0|^2 + \varepsilon |\nabla V_0|^2 \right)\rd X +  \varepsilon \int_{\mathbb{R}^N} \Phi(v_0) \dx \bigg\}\dt \\
& \leq \varepsilon \int_{\mathbb{R}^{N+1}} |y|^a |\nabla V_0|^2 \rd X + \varepsilon \int_{\RR^N} \chi_{\{v_0>0\}} \dx \leq \varepsilon \| V_0\|_{H^{1,a}(\RR^{N+1})}^2  + \varepsilon \mathcal{L}^N(\{v_0 > 0\}) \leq C\varepsilon,
\end{aligned}
\]
where we have used H\"older's inequality and assumption \eqref{eq:Ass2Data}. From the above inequality, \eqref{eq:LevelEst} follows too: if $V \in \UU_0$ is a minimizer, we have $\JJ_\vep(V) \leq \JJ_\vep(V_0) \leq C\varepsilon$ and $C$ depends only on $N$, $a$ and $V_0$.

We are left to prove the existence of a minimizer. Since $V_0 \in \UU_0$ and $\JJ_\vep(V_0) < + \infty$, there exists a minimizing sequence $\{V_j\}_{j\in\mathbb{N}} \subset \UU_0$ :
\begin{equation}\label{eq:MinSeqCond}
\lim_{j \to +\infty} \JJ_\vep(V_j) = \inf_{\tilde{V}\in\UU_0} \JJ_\vep(\tilde{V}) \in [0,+\infty).
\end{equation}
In particular, for every fixed $R > 0$, we have 
\[
\int_{\QQ_R^+} |y|^a \left(|\partial_t V_j|^2 + \varepsilon |\nabla V_j|^2 \right)\rd X \dt \leq C_R,
\]
for some $C_R > 0$ independent of $j$ and so, since $V_j|_{t=0} = V_0|_{t=0}$ for every $j$, $\{V_j\}_{j\in\mathbb{N}}$ is uniformly bounded in $H^{1,a}(\QQ_R^+)$. By the compactness of the inclusion $L^{2,a}(\QQ_R^+) \hookrightarrow H^{1,a}(\QQ_R^+)$, there exists $V \in H^{1,a}(\QQ_R^+)$ such that $V_j \rightharpoonup V$ weakly in $H^{1,a}(\QQ_R^+)$ and $V_j \to V$ a.e. in $\QQ_R^+$ and in $L^{2,a}(\QQ_R^+)$, up to passing to a suitable subsequence, still denoted with $V_j$.

Similar, setting $v_j := V_j|_{y=0}$, we have that $\{v_j\}_{j\in\mathbb{N}}$ is uniformly bounded in $H^{\frac{1-a}{2}}(Q_R^+)$ by the trace theorem (see for instance \cite{Nekvinda1993:art}) and so, up to a subsequence, $v_j \to v$ a.e. in $Q_R^+$ and in $L^2(Q_R^+)$, where $v := V|_{y=0}$.

Now, since $\mathbb{Q}_\infty = \cup_{R>0} \QQ_R^+$ and $Q_\infty = \cup_{R>0} Q_R^+$, a standard diagonal argument shows that
\[
\begin{aligned}
&V_j \rightharpoonup V \quad \text{weakly in } \UU \\
&V_j \to V \quad \text{a.e. in } \mathbb{Q}_\infty \text{ and in } L^{2,a}(\QQ_\infty), \\
&v_j \to v  \quad \text{a.e. in } Q_\infty \text{ and in } L^2(Q_\infty),
\end{aligned}
\]
up to passing to an additional subsequence. Notice that since $\UU_0$ is closed and convex, we have $V \in \UU_0$. Further, by continuity, we have $\Phi(v_j) \to \Phi(v)$ a.e. in $Q$ and so, by lower semicontinuity and Fatou's lemma, it follows
\[
\JJ_\vep(V) \leq \lim_{j \to +\infty} \JJ_\vep(V_j) = \inf_{\widetilde{V} \in \UU_0} \JJ_\vep(\widetilde{V}),
\]
i.e., $V$ is a minimizer of $\JJ_\vep$. 
\end{proof}
\begin{lem}\label{lem:BasicPropMin}
Let $\vep \in (0,1)$ be fixed. Then:

\smallskip

\noindent $\bullet$ If $V_0 \geq 0$ a.e. and $V_\vep \in \UU_0$ is a minimizer of $\JJ_\vep$, then $V_\vep \geq 0$ a.e. (and $V_\vep\not\equiv 0$).

\smallskip

\noindent $\bullet$ If $V_0 \leq 1$ a.e. and $V_\vep \in \UU_0$ is a minimizer of $\JJ_\vep$, then $V_\vep \leq 1$ a.e.

\smallskip

\noindent $\bullet$ If $V_0$ is even w.r.t. $y$, then there exists a minimizer of $\JJ_\vep$ in $\UU_0$ which is even w.r.t. $y$.
\end{lem}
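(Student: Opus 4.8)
The plan is to prove the three bullets of Lemma~\ref{lem:BasicPropMin} by the standard competitor/truncation technique: replace a putative minimizer $V_\vep$ by a truncated version that lies in $\UU_0$ and strictly decreases $\JJ_\vep$ unless $V_\vep$ already satisfies the claimed bound. The only nontrivial points are (a) checking that the truncated function still belongs to $\UU_0$ (i.e.\ it has finite energy and agrees with $V_0$ at $t=0$), and (b) keeping track of the boundary term $\Phi(v)$, whose monotonicity properties force the choice of the truncation level.

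\begin{proof}
Let $\vep \in (0,1)$ be fixed and write $V = V_\vep$, $v = V|_{y=0}$, $v_0 = V_0|_{y=0}$.

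\emph{Lower bound $V \geq 0$.} Assume $V_0 \geq 0$ a.e.\ and let $V$ be a minimizer of $\JJ_\vep$ in $\UU_0$. Set $\widetilde{V} := \max\{V,0\} = V_+$. Since $V \in \UU$, also $V_+ \in \UU$ with $\partial_t V_+ = \chi_{\{V>0\}}\partial_t V$ and $\nabla V_+ = \chi_{\{V>0\}}\nabla V$ (Stampacchia's lemma applied slicewise, using $V \in H^{1,a}(\QQ_R^+)$ for every $R$), so all the bulk integrands decrease pointwise: $|\partial_t V_+|^2 \leq |\partial_t V|^2$ and $|\nabla V_+|^2 \leq |\nabla V|^2$ a.e. Moreover $V_+|_{t=0} = (V_0)_+ = V_0$ a.e.\ since $V_0 \geq 0$, so $\widetilde V \in \UU_0$. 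For the boundary term, $\widetilde v := \widetilde V|_{y=0} = v_+$, and since $\Phi(r) = 2\int_0^r \beta$ with $\beta \geq 0$ and $\supp\beta = [0,1]$, the function $\Phi$ is nondecreasing, vanishes on $(-\infty,0]$, and $\Phi(v_+) = \Phi(v)$ when $v \geq 0$ while $\Phi(v_+) = 0 \leq \Phi(v)$ when $v < 0$; hence $\Phi(\widetilde v) \leq \Phi(v)$ a.e.\ on $Q_\infty$. Therefore $\JJ_\vep(\widetilde V) \leq \JJ_\vep(V)$, and by minimality equality holds, which forces $\int_0^\infty e^{-t}\int_{\RR^{N+1}} |y|^a \chi_{\{V<0\}}(|\partial_t V|^2 + \vep|\nabla V|^2)\,\rd X\,\rd t = 0$ together with $V_+|_{t=0} = V_0$. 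Thus $V$ is (slicewise) time-independent on the set $\{V < 0\}$, but at $t=0$ it equals $V_0 \geq 0$; a short measure-theoretic argument (the set $\{V<0\}$ has a.e.\ time-slices that are a union of intervals along which $V$ is constant and connected to $t=0$) gives $V \geq 0$ a.e. Nontriviality $V \not\equiv 0$ is already part of the definition of $\UU_0$ through $U_0 \not\equiv 0$, and it is preserved because $V_+ = V$ a.e.

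\emph{Upper bound $V \leq 1$.} Assume $V_0 \leq 1$ a.e. Set $\widetilde V := \min\{V,1\}$. As before $\widetilde V \in \UU$ with bulk integrands decreasing pointwise, and $\widetilde V|_{t=0} = \min\{V_0,1\} = V_0$, so $\widetilde V \in \UU_0$. For the boundary term, $\widetilde v = \min\{v,1\}$; since $\supp\beta = [0,1]$, $\Phi$ is constant (equal to $1$) on $[1,\infty)$, so $\Phi(\min\{v,1\}) = \Phi(v)$ pointwise. Hence $\JJ_\vep(\widetilde V) \leq \JJ_\vep(V)$, and arguing exactly as above equality and the initial condition $V_0 \leq 1$ force $V \leq 1$ a.e.

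\emph{Existence of an even minimizer.} Assume $V_0$ is even in $y$. Let $V$ be any minimizer (one exists by Lemma~\ref{lem:ExistenceMin}) and let $\widehat V(x,y,t) := \tfrac12\big(V(x,y,t) + V(x,-y,t)\big)$ be its even symmetrization. Since the weight $|y|^a$, the measure, and all the integrands in $\JJ_\vep$ are invariant under $y \mapsto -y$, the reflected function $V^{\mathrm{refl}}(x,y,t) := V(x,-y,t)$ is again a minimizer in $\UU_0$ (note $V^{\mathrm{refl}}|_{t=0} = V_0^{\mathrm{refl}} = V_0$). Then $\widehat V \in \UU_0$ (it is a convex combination of elements of the convex set $\UU_0$), and by convexity of $\JJ_\vep$ — the bulk part is convex since $z \mapsto |z|^2$ is convex, and the boundary part, although $\Phi$ itself need not be convex, satisfies $\int \Phi(\widehat v) \le \tfrac12\int\Phi(v) + \tfrac12\int\Phi(v^{\mathrm{refl}})$ only if $\Phi$ is convex, so one instead argues directly: $\widehat v = \tfrac12(v + v^{\mathrm{refl}})$ and by the strict convexity of the Dirichlet part one gets $\JJ_\vep(\widehat V) \le \max\{\JJ_\vep(V),\JJ_\vep(V^{\mathrm{refl}})\} = \inf_{\UU_0}\JJ_\vep$ provided the boundary term does not increase. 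When $\Phi$ is not convex this last point requires care; however, since $v$ and $v^{\mathrm{refl}}$ have the same distribution (they differ only through the sign of $y$, which does not affect the trace at $y=0$ — in fact $v = v^{\mathrm{refl}}$ because the trace at $\{y=0\}$ of $V$ and of its $y$-reflection coincide) one actually has $v = v^{\mathrm{refl}}$, hence $\widehat v = v$ and $\int_{Q_\infty}\Phi(\widehat v) = \int_{Q_\infty}\Phi(v)$. Combined with the (non-strict) convexity of the bulk term, $\JJ_\vep(\widehat V) \leq \JJ_\vep(V) = \inf_{\UU_0}\JJ_\vep$, so $\widehat V$ is a minimizer which is even in $y$.
\end{proof}

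The step I expect to be the genuine obstacle is the passage from ``the energy gain of the truncation is zero'' to the pointwise conclusion $V \geq 0$ (resp.\ $V \leq 1$): one must rule out that $V$ dips below $0$ on a positive-measure set on which it is merely time-independent, and this is where the initial trace $V|_{t=0} = V_0$ is essential — it pins the constant value along each degenerate time-slice to a nonnegative (resp.\ $\leq 1$) number. For the even-minimizer claim, the subtlety is that $\Phi$ is not assumed convex; the clean way out, used above, is to observe that the trace on $\{y=0\}$ is literally unchanged by the even symmetrization, so the (possibly nonconvex) boundary term plays no role and only the convexity of the quadratic bulk part is needed.
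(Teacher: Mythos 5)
Your proof is correct. For the first two bullets it is essentially the paper's argument: the truncations $V_+$ and $\min\{V,1\}$ are admissible competitors, the bulk terms do not increase, and the boundary term is unchanged because $\Phi\equiv 0$ on $(-\infty,0]$ and $\Phi\equiv 1$ on $[1,+\infty)$; the paper compresses the equality case into ``$\JJ_\vep(V_+)<\JJ_\vep(V)$ unless $V\ge 0$ a.e.'', while you make explicit the point that equality only forces the space--time derivatives of $V$ to vanish on $\{V<0\}$, and that the initial trace then rules out a negative set of positive measure. Your slicing argument is sound (for a.e.\ $X$ the slice $t\mapsto V(X,t)$ is absolutely continuous with value $V_0(X)$ at $t=0$, so any maximal interval on which it is a negative constant would have to start at $t=0$); an even shorter route is to observe that equality gives $\nabla_{X,t}V_-=0$ a.e., hence $V_-$ is constant, and the constant is $0$ by the initial trace. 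For the third bullet you genuinely deviate: the paper's competitor is the one-sided even reflection $V_e$ of $V|_{\{y\ge 0\}}$, and its claim $\JJ_\vep(V_e)=\JJ_\vep(V)$ really rests on minimality (comparing with the reflection of the other half shows the two half-space bulk energies coincide), whereas you take the average $\widehat V=\tfrac12\big(V+V^{\mathrm{refl}}\big)$ and use convexity of the quadratic bulk plus the observation that the $y$-reflection leaves the trace on $\{y=0\}$ unchanged, so the possibly nonconvex $\Phi$-term is literally unaffected and $\JJ_\vep(\widehat V)\le\tfrac12\JJ_\vep(V)+\tfrac12\JJ_\vep(V^{\mathrm{refl}})=\JJ_\vep(V)$. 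Both routes work: yours avoids any comparison of the energies of the two halves, at the price of invoking the trace identification $v=v^{\mathrm{refl}}$, which is legitimate for $a\in(-1,1)$ since $H^{1,a}$ is defined as the closure of smooth functions, so the two one-sided restriction operators extend to the same trace operator (the paper's reflection competitor uses the same fact implicitly). One cosmetic remark: the middle of your third paragraph contains false starts (the bound via $\max$, the remark about equal distributions) that you later supersede; the final chain through $\widehat v=v$ is the argument to keep.
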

\begin{proof}
Let $V := V_\vep \in \UU_0$ be a minimizer of $\JJ_\vep$ and assume $V_0 \geq 0$ a.e.. Then $V_+$ is an admissible competitor, with $\JJ_\vep(V_+) < \JJ_\vep(V)$, unless $V \geq 0$ a.e.

Similar if $V_0 \leq 1$ a.e., $W := \min\{V,1\}$ is an admissible competitor and, since $\Phi(v) = 1$ for $v \geq 1$ in view of \eqref{eq:Reaction}, we have $\JJ_\vep(W) < \JJ_\vep(V)$, unless $V \leq 1$ a.e.

Finally, if $V_0$ is even w.r.t. $y$ and $V \in \UU_0$ is a minimizer of $\JJ_\vep$, then 
\[
V_e(x,y,t) :=
\begin{cases}
V(x,y,t) \quad &\text{ if } y \geq 0 \\
V(x,-y,t) \quad &\text{ if } y < 0
\end{cases}
\]
is an admissible competitor, with $\JJ_\vep(V_e) = \JJ_\vep(V)$. 
\end{proof}
\begin{rem}\label{rem:EvenPropMin} The last point of the above statement and Lemma \ref{lem:ExistenceMin} tell us that if the initial data is even w.r.t. $y$, then we may assume that $\FF_\vep$ has a minimizer $U_\vep$ which is even w.r.t. $y$. Such minimizer satisfy
\[
\FF_\vep(U_\vep) = 2 \EE_\vep(U_\vep|_{y > 0}),
\]
where $\EE_\vep$ is defined in \eqref{eq:Functional0} and $U_\vep|_{y > 0}$ is the restriction of $U_\vep$ to $\RR^{N+1}_+\times(0,\infty)$. It thus turns out that minimizing $\FF_\vep$ in $\UU_0$ (with $U_0$ even w.r.t. $y$) is equivalent to minimizing $\EE_\vep$ in the space $\UU_0^+ := \{U|_{y > 0}: U \in \UU_0\}$.
\end{rem}
%
%
%
%
%
\subsection{Proof of Proposition \ref{prop:weaklimit}}

Proposition \ref{prop:weaklimit} will be obtained as a consequence of the following energy estimates.
\begin{prop}\label{thm:UnifEnergyEst} (Global uniform energy estimates)
There exists $C > 0$ depending only on $N$, $a$ and $U_0$ such that for every family $\{U_\vep\}_{\vep \in (0,1)} \in \UU_0$ of minimizers of $\FF_\vep$, we have
\begin{equation}\label{eq:EnBound1}
\int_0^\infty \int_{\mathbb{R}^{N+1}} |y|^a |\partial_t U_\vep|^2\dX \rd\tau \leq C,
\end{equation}
and, for every $R \geq \varepsilon$,
\begin{equation}\label{eq:EnBound2}
\int_0^R \int_{\mathbb{R}^{N+1}} |y|^a |\nabla U_\vep|^2\dX\rd\tau + \int_0^R \int_{\mathbb{R}^N\times\{0\}} \Phi(u_\vep)\dx\rd\tau \leq CR.
\end{equation}
\end{prop}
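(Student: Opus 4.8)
\textbf{Proof strategy for Proposition \ref{thm:UnifEnergyEst}.}

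The plan is to exploit the scaling relation \eqref{eq:RelFuncFJ} together with the level bound \eqref{eq:LevelEst} from Lemma \ref{lem:ExistenceMin}. Recall that $U_\vep$ minimizes $\FF_\vep$ in $\UU_0$ if and only if $V_\vep(X,t) := U_\vep(X,\vep t)$ minimizes $\JJ_\vep$ in $\UU_0$, and that $\JJ_\vep(V_\vep) \le C\vep$ with $C = C(N,a,U_0)$. Writing this out,
\[
\int_0^\infty e^{-t}\bigg\{ \int_{\RR^{N+1}} |y|^a\big(|\partial_t V_\vep|^2 + \vep|\nabla V_\vep|^2\big)\dX + \vep\int_{\RR^N\times\{0\}} \Phi(v_\vep)\dx \bigg\}\dt \le C\vep,
\]
and undoing the change of variables $\tau = \vep t$ (so $\partial_t V_\vep = \vep\,\partial_\tau U_\vep$, $\dt = \vep^{-1}\dtau$), this becomes
\[
\int_0^\infty \frac{e^{-\tau/\vep}}{\vep}\bigg\{ \vep^2\int_{\RR^{N+1}} |y|^a|\partial_\tau U_\vep|^2\dX + \vep\int_{\RR^{N+1}} |y|^a|\nabla U_\vep|^2\dX + \vep\int_{\RR^N\times\{0\}} \Phi(u_\vep)\dx \bigg\}\dtau \le C\vep.
\]
Dividing by $\vep$ and matching coefficients, one reads off $\vep\int_0^\infty \frac{e^{-\tau/\vep}}{\vep}\int |y|^a|\partial_\tau U_\vep|^2 \le C$ and $\int_0^\infty \frac{e^{-\tau/\vep}}{\vep}\big(\int |y|^a|\nabla U_\vep|^2 + \int \Phi(u_\vep)\big) \le C$. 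Since $\Phi\ge 0$ and all other integrands are nonnegative, both pieces are controlled by $C$.

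To convert these weighted ($e^{-\tau/\vep}/\vep$) averages into the genuine integral bounds claimed, the key is a \emph{monotonicity in time} of the relevant spatial energies, which follows from minimality by a cut-off/translation-in-time argument in the spirit of Serra--Tilli \cite{SerraTilli2012:art,SerraTilli2016:art}: for a minimizer, the map $\tau\mapsto \int_{\RR^{N+1}}|y|^a|\nabla U_\vep(\tau)|^2\dX + \int_{\RR^N\times\{0\}}\Phi(u_\vep(\tau))\dx$ is (essentially) nonincreasing, and similarly $\tau\mapsto \vep\int|y|^a|\partial_\tau U_\vep(\tau)|^2$ is nonincreasing. Granting this, for \eqref{eq:EnBound1} one bounds, for any $T>0$,
\[
e^{-T/\vep}\int_0^T \vep\int|y|^a|\partial_\tau U_\vep|^2\dX\dtau \le \int_0^T \frac{e^{-\tau/\vep}}{\vep}\,\vep^2\int|y|^a|\partial_\tau U_\vep|^2\dX\dtau \le C\vep,
\]
wait --- this is the wrong direction; instead one uses monotonicity to compare the value at time $\tau$ with the average over $[0,\tau]$, obtaining pointwise decay of the spatial energies like $C/\tau$, and then the global integral $\int_0^\infty$ of $|y|^a|\partial_\tau U_\vep|^2$ is finite (with a uniform constant) by splitting $[0,1]$ and $[1,\infty)$ and using the decay on the tail. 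For \eqref{eq:EnBound2}, monotonicity of the gradient--plus--$\Phi$ energy together with the averaged bound $\int_0^\infty \frac{e^{-\tau/\vep}}{\vep}(\cdots)\dtau\le C$ gives that the spatial energy at time $\tau$ is at most $C/\vep \cdot$(something), more precisely one shows the spatial energy is $\le C$ for $\tau\ge\vep$ by comparing with the average over an interval of length comparable to $\tau$; integrating over $[0,R]$ then yields $\le CR$.

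\textbf{Main obstacle.} The delicate point is establishing the monotonicity (or quasi-monotonicity, with controlled error) of the spatial energies along the time direction for minimizers of $\FF_\vep$ (equivalently $\JJ_\vep$), uniformly in $\vep$. This is not the Euler--Lagrange equation alone: it requires testing minimality against time-dilated or time-truncated competitors that agree with $U_\vep$ at $t=0$ (so as to stay in $\UU_0$) and carefully tracking the exponential weight $e^{-t/\vep}$, which is exactly the mechanism that makes the WIED functional encode the parabolic flow. One must check that the competitor is admissible (same initial trace, finite energy) and that the resulting inequality, after letting the dilation parameter tend to $1$, produces a differential inequality for the energy with the right sign. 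Once this quasi-monotonicity is in hand, deriving \eqref{eq:EnBound1}--\eqref{eq:EnBound2} from \eqref{eq:LevelEst} is essentially bookkeeping with the exponential weight and the assumption $\mathcal L^N(\{u_0>0\})<\infty$, which is what keeps the constant $C$ dependent only on $N$, $a$ and $U_0$.
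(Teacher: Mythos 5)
There is a genuine gap, and it sits exactly where you flag your ``main obstacle''. The inner-variation (time-reparametrization) argument of Serra--Tilli that the paper uses does \emph{not} yield monotonicity of the spatial energies $\I(t)=\int|y|^a|\partial_t V_\vep|^2\dX$ or $\R(t)=\vep\int|y|^a|\nabla V_\vep|^2\dX+\vep\int\Phi(v_\vep)\dx$, which is what your argument needs; it yields the identity $\E'=-2\I$ for the \emph{discounted future energy} $\E(t)=e^{t}\int_t^\infty e^{-\tau}\bigl[\I(\tau)+\R(\tau)\bigr]\rd\tau$ (Lemma \ref{Lemma:DerivEn}). From this one gets $\E(t)\le\E(0)=\JJ_\vep(V_\vep)\le C\vep$ for \emph{all} $t\ge0$, and that is the mechanism that propagates smallness beyond the time scale $O(\vep)$: it gives at once $2\int_0^\infty\I\le\E(0)\le C\vep$ and $\int_t^{t+1}\R\le e\,\E(t)\le Ce\,\vep$ for every $t$, which after the change of variables $t=\vep\tau$ and summation over $\lceil R/\vep\rceil$ intervals are precisely \eqref{eq:EnBound1} and \eqref{eq:EnBound2}. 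Your proposed pointwise monotonicity of $\I$ and of $\R$ is an unproven (and much stronger) assertion, and no argument for it is sketched beyond invoking the same inner variations, which do not produce it.

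Moreover, even if one grants your monotonicity claim, the deduction of \eqref{eq:EnBound1} does not go through. The only input you use is the level bound, i.e. $\int_0^\infty e^{-\tau/\vep}\int|y|^a|\partial_\tau U_\vep|^2\dX\rd\tau\le C$; combined with monotonicity of $\tau\mapsto\int|y|^a|\partial_\tau U_\vep(\tau)|^2\dX$ this only yields the pointwise bound $\int|y|^a|\partial_\tau U_\vep(T)|^2\dX\le C/\bigl(\vep(1-e^{-T/\vep})\bigr)\lesssim C/\vep$ for $T\ge\vep$, because the weight $e^{-\tau/\vep}$ discards all information beyond times of order $\vep$. This is not a decay of order $C/\tau$, and in any case $C/\tau$ would not be integrable on $[1,\infty)$, so your ``split $[0,1]$ and $[1,\infty)$'' step fails; indeed even $\int_0^1\int|y|^a|\partial_\tau U_\vep|^2$ is only bounded by $C/\vep$ along these lines, which is not uniform in $\vep$. (Your argument for \eqref{eq:EnBound2} would be fine \emph{if} the monotonicity of the gradient-plus-$\Phi$ energy were available, but that is precisely the unestablished step.) The fix is to replace the claimed monotonicity of the spatial energies by the paper's energy identity $\E'=-2\I$, obtained by testing minimality with the competitor $W(X,t)=V_\vep(X,t-\lambda\zeta(t))$ and differentiating at $\lambda=0$; the bounds \eqref{eq:EnBound1}--\eqref{eq:EnBound2} then follow from $\E(t)\le C\vep$ as in Corollary \ref{cor:GlobalEnEstV} and the rescaling/summation in the paper's proof.
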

The above statement is the key result of this section and will be proved by combining Lemma \ref{Lemma:DerivEn} and Corollary \ref{cor:GlobalEnEstV} that we show below. As in the above subsection, we consider a minimizer $V$ of $\JJ_\vep$ and we write
\begin{equation}\label{eq:ShortJFuncInner}
\JJ_\vep(V) = \int_0^\infty e^{-t} \left[ \I(t) + \R(t) \right] \rd t,
\end{equation}
where
\begin{equation}\label{eq:DefIandR}
\I(t) := \int_{\mathbb{R}^{N+1}} |y|^a |\partial_t V|^2\dX, \qquad \R(t) := \varepsilon \int_{\mathbb{R}^{N+1}} |y|^a |\nabla V|^2\dX + \varepsilon \int_{\mathbb{R}^N} \Phi(v)\dx.
\end{equation}
Notice that, since $V$ is a minimizer, we have $\I,\R \in L^1_{loc}(\mathbb{R}_+)$ and $t \to e^{-t} \left[ \I(t) + \R(t) \right] \in L^1(\mathbb{R}_+)$. Further, we introduce the function
\begin{equation}\label{eq:Energy}
\E(t) := e^t \int_t^\infty e^{-\tau} \left[ \I(\tau) + \R(\tau) \right] \rd \tau,
\end{equation}
which belongs to $W^{1,1}_{loc}(\mathbb{R}_+)\cap C(\overline{\mathbb{R}_+})$, and satisfies $E(0) = \JJ_\vep(V)$ and
\begin{equation}\label{eq:DerivEn1}
\E' = \E - \I - \R \quad \text{ in } \mathcal{D}'(\mathbb{R}_+).
\end{equation}
The main idea of the following lemma is to find a different expression for the derivative of the function $\E$ defined in \eqref{eq:Energy}. The new formulation for $\E'$ is crucial to prove our main estimates.
\begin{lem}\label{Lemma:DerivEn}
Let $V \in \UU_0$ be a minimizer of $\JJ_\vep$. Then
\begin{equation}\label{eq:DerivEn2}
\E' = - 2\I \quad \text{ in } \mathcal{D}'(\mathbb{R}_+).
\end{equation}
\end{lem}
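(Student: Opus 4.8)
The plan is to compute $\E'$ by differentiating the defining formula \eqref{eq:Energy} and then to reinterpret the result using a competitor argument tailored to the time-translation invariance of the functional $\JJ_\vep$. Concretely, I would first record the ``naive'' identity \eqref{eq:DerivEn1}, namely $\E' = \E - \I - \R$, which follows directly from $\E(t) = e^t\int_t^\infty e^{-\tau}[\I(\tau)+\R(\tau)]\rd\tau$. The key extra ingredient must be an independent expression for the quantity $\E - \R$ (equivalently, a relation tying the ``energy'' $\E$ to twice the dissipation $\I$ plus $\R$); combining it with \eqref{eq:DerivEn1} should yield $\E' = -2\I$.

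To get that extra relation I would exploit minimality of $V$ against \emph{time-rescaled} competitors, in the spirit of Serra--Tilli. Fix $\lambda$ near $1$ and consider, for each small $h$, the competitor $V_h(X,t) := V(X, \phi_h(t))$ where $\phi_h$ is a smooth time reparametrization equal to the identity near $t=0$ (so $V_h \in \UU_0$, i.e. the initial datum is preserved) and which behaves like a dilation $t\mapsto (1+h)t$ for large $t$; more precisely I would take the infinitesimal generator to be a compactly supported (away from $0$) vector field $\xi(t)\partial_t$ and set $V_h(X,t) = V(X, t+h\,\xi(t)+o(h))$, then differentiate $\JJ_\vep(V_h)$ at $h=0$. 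Since the weight $e^{-t}$ is not invariant, the derivative picks up (i) a term from the Jacobian $1+h\xi'$, (ii) a term from $\partial_t e^{-t} = -e^{-t}$ acting through the shift, and (iii) a term from $\partial_t V \mapsto (1+h\xi')\partial_t V$ inside $\I$ — note $\R$ contains no time derivatives of $V$, so it only feels (i) and (ii). Setting the first variation to zero for all admissible $\xi$ gives a pointwise (in $t$) identity relating $\I$, $\R$ and the tail integral defining $\E$; integrating appropriately should produce exactly $\E - \R = 2\I + (\text{something that vanishes})$, hence \eqref{eq:DerivEn2}. Alternatively, and perhaps more cleanly, I would test the Euler--Lagrange equation \eqref{eq:EulerEqMinFF} (rewritten for $\JJ_\vep$) with the vector field $\eta = \xi(t)\partial_t V$, integrate by parts in $t$, and use $\Phi'(v) = 2\beta(v)$ to convert the boundary reaction term into $\partial_t \Phi(v)$; this directly yields the desired differential identity for $\E$.

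The main obstacle I anticipate is \textbf{regularity/justification}: the vector field $\xi(t)\partial_t V$ is only in $L^2$-type spaces (we merely know $\partial_t V \in L^{2,a}_{loc}$ and $\nabla\partial_t V$ is not available), so testing \eqref{eq:EulerEqMinFF} with it is not immediately legitimate, and the integrations by parts in $t$ that move a derivative onto $\partial_t V$ must be justified by a density/approximation argument (e.g. Steklov averages in time, or first establishing the identity for the smooth approximating competitors $V_h$ and then passing to the limit $h\to 0$ using only the known bounds $\I,\R\in L^1_{loc}$ and $e^{-t}(\I+\R)\in L^1$). One must also be careful near $t=0$, where the constraint $V|_{t=0}=U_0$ forbids translating the initial slice — this is why $\xi$ (equivalently $\phi_h - \mathrm{id}$) is required to vanish near $0$, which is precisely what keeps the computation on the open half-line $\mathbb{R}_+$ and explains why \eqref{eq:DerivEn2} is asserted only in $\mathcal{D}'(\mathbb{R}_+)$. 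Once these technical points are handled, combining the new identity with \eqref{eq:DerivEn1} is a one-line algebraic step.
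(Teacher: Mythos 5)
Your primary route is exactly the paper's: it constructs the inner-variation competitor $W(X,t)=V(X,\varphi(t))$ with $\varphi(t)=t-\lambda\zeta(t)$, $\zeta$ the primitive of a test function supported away from $t=0$ (so the initial condition is preserved), computes the three contributions you list (Jacobian, weight $e^{-t}$, and the factor $\varphi'^2$ hitting only $\I$), sets the first variation at $\lambda=0$ to zero, and combines the resulting integral identity with \eqref{eq:DerivEn1} via an integration by parts to obtain \eqref{eq:DerivEn2}. Your caution about the alternative of testing \eqref{eq:EulerEqMinFF} with $\xi(t)\partial_t V$ is well placed; the paper avoids it precisely by working at the level of the functional, where only $\I,\R\in L^1_{loc}$ and $e^{-t}(\I+\R)\in L^1$ are needed.
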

\begin{proof} We follow the proof of \cite[Proposition 3.1]{SerraTilli2012:art} (see also \cite[Lemma 4.5]{BogeleinEtAl2014:art} and \cite[Lemma 4.2]{AudritoSerraTilli2021:art}).

Let $V \in \UU_0$ be a minimizer of $\JJ_\vep$. Fix $\eta \in C_0^\infty(0,\infty)$, consider $\zeta(t) := \int_0^t\eta(\tau)\rd\tau$, and, given $\lambda \in \mathbb{R}$, define
\begin{equation}\label{eq:DefPhi}
\varphi(t) := t - \lambda\zeta(t), \quad t \geq 0.
\end{equation}
It is easily seen that, if $|\lambda| \leq \lambda_0$ for some $\lambda_0 > 0$ small enough, then $\varphi$ is strictly increasing with $\varphi(0) = 0$. In particular, the inverse $\psi = \varphi^{-1}$ exists, it is smooth and, by \eqref{eq:DefPhi}, satisfies
\begin{equation}\label{eq:DefPsi}
\psi(\tau) = \tau + \lambda \zeta(\psi(\tau)).
\end{equation}
The key idea of the proof is to use the function $\varphi$ to construct a competitor $W$. It is obtained as an inner variation of $V$:
\[
W(X,t) := V(X,\varphi(t)).
\]
Since $\varphi(0) = 0$, we have $W = V$ when $t=0$ and so $W \in \UU_0$. Further, by \eqref{eq:DefPhi}, $W = V$ when $\lambda = 0$ (by sake of simplicity, the dependence on $\lambda$ is omitted in the notations for $\varphi$, $\psi$ and $W$).

Now, from the formulation of $\JJ_\vep$ introduced in \eqref{eq:ShortJFuncInner} and the change of variable $t = \psi(\tau)$, we have
\[
\begin{aligned}
\JJ_\vep(W) &= \int_0^\infty e^{-t} \bigg\{ \int_{\mathbb{R}^{N+1}_+} y^a \left(|\partial_t W|^2 + \varepsilon |\nabla W|^2 \right)\rd X + \varepsilon \int_{\mathbb{R}^N} \Phi(w)\dx \bigg\}\dt \\
&= \int_0^\infty e^{-t} \left[ \varphi'(t)^2 \, \I(\varphi(t)) + \R(\varphi(t)) \right] \rd t = \int_0^\infty \psi'(\tau) e^{-\psi(\tau)} \left[ \varphi'(\psi(\tau))^2 \,\I(\tau) + \R(\tau) \right] \rd \tau.
\end{aligned}
\]
In view of \eqref{eq:DefPhi} and \eqref{eq:DefPsi}, $\varphi',\psi' \in L^\infty(\mathbb{R}^+)$ and $e^{-\psi(\tau)} \leq e^{\lambda \|\zeta\|_{L^\infty(\mathbb{R}_+)}} e^{-\tau}$, and thus $\JJ_\vep(W) < +\infty$. In particular, we deduce that, for any small $\lambda$ ($|\lambda| \leq \lambda_0$), $W \in \UU_0$ is an admissible competitor. Actually, recalling that $W = V$ when $\lambda = 0$ and $V$ is a minimizer, it must be
\begin{equation}\label{eq:GatDerJInner}
\lim_{\lambda \to 0^+} \frac{\JJ_\vep(W) - \JJ_\vep(V)}{\lambda} = 0.
\end{equation}
Proceeding exactly as in \cite[Lemma 4.2]{AudritoSerraTilli2021:art}, we compute
\begin{equation}\label{eq:CompInner}
\frac{\partial}{\partial\lambda} \left( \psi'(\tau) e^{-\psi(\tau)} \right) \Big|_{\lambda = 0} =  \zeta'(\tau)e^{-\tau} - \zeta(\tau)e^{-\tau}, \qquad \frac{\partial}{\partial\lambda} \left|\varphi'(\psi(\tau))\right|^2 \Big|_{\lambda = 0} = -2\zeta'(\tau).
\end{equation}
Consequently, recalling that $t \to e^{-t} \left\{ \I(t) + \R(t) \right\} \in L^1(\mathbb{R}_+)$ and using the dominated convergence theorem, we can pass to the limit in \eqref{eq:GatDerJInner} and, making use of \eqref{eq:CompInner}, we can write \eqref{eq:GatDerJInner} explicitly:
\begin{equation}\label{eq:MinCondInner}
\int_0^\infty \big[ \zeta'(\tau)e^{-\tau} - \zeta(\tau)e^{-\tau} \big] \big[ \I(\tau) + \R(\tau) \big] \rd\tau - 2 \int_0^\infty e^{-\tau}\zeta'(\tau) \, \I(\tau)  \dtau = 0,
\end{equation}
where we have used $\psi(\tau) = \tau$, and $\varphi' = \psi' = 1$ when $\lambda = 0$ (see \eqref{eq:DefPhi} and \eqref{eq:DefPsi}).

Now, we show how equation \eqref{eq:MinCondInner} leads to \eqref{eq:DerivEn2}. Recalling that $\zeta' = \eta$ and writing \eqref{eq:DerivEn1} with test function $\zeta'(\tau)e^{-\tau}$, we easily obtain
\begin{equation}\label{eq:AuxForInner1}
\begin{aligned}
\int_0^\infty \zeta'(\tau)e^{-\tau} \big[ \I(\tau) + \R(\tau) \big] \rd\tau &= \int_0^\infty  \E(\tau) \big[ \zeta'(\tau) e^{-\tau} + \big( \zeta'(\tau)e^{-\tau} \big)' \big] \rd\tau \\
&= \int_0^\infty \zeta'(\tau) e^{-\tau} \E(\tau) \dtau + \int_0^\infty \E(\tau) \big( \eta(\tau)e^{-\tau} \big)' \rd\tau.
\end{aligned}
\end{equation}
Further, using the definition of $\E$ given in \eqref{eq:Energy} and integrating by parts, it follows
\begin{equation}\label{eq:AuxForInner2}
\begin{aligned}
\int_0^\infty \zeta'(\tau) e^{-\tau} \E(\tau) \dtau &= \int_0^\infty \zeta'(\tau) \int_\tau^\infty e^{-\vartheta} \big[ \I(\vartheta) + \R(\vartheta) \big] \rd\vartheta \rd \tau \\
&= \int_0^\infty \zeta(\tau) e^{-\tau} \big[ \I(\tau) + \R(\tau) \big] \rd \tau.
\end{aligned}
\end{equation}
The ``boundary terms'' in the integration by parts disappear since $\zeta(0) = 0$ and $e^{-t}\E(t) \to 0$ as $t \to +\infty$. Finally, plugging \eqref{eq:AuxForInner1} and \eqref{eq:AuxForInner2} into \eqref{eq:MinCondInner}, it follows
\[
\int_0^\infty \E(\tau) \big( e^{-\tau} \eta(\tau) \big)' \rd\tau = 2 \int_0^\infty e^{-\tau} \eta(\tau) \, \I(\tau)  \dtau,
\]
and, since $\eta \in C_0^\infty(\mathbb{R}^+)$ is arbitrary, \eqref{eq:DerivEn2} is proved.
\end{proof}

\begin{cor}\label{cor:GlobalEnEstV}
Let $V \in \UU_0$ be a minimizer of $\JJ_\vep$. Then
\begin{equation}\label{eq:BoundI}
\int_0^\infty \int_{\mathbb{R}^{N+1}} |y|^a |\partial_\tau V|^2\dX \rd\tau \leq C\varepsilon,
\end{equation}
and, for every $t \geq 0$,
\begin{equation}\label{eq:BoundR}
\int_t^{t+1}\int_{\mathbb{R}^{N+1}} |y|^a |\nabla V|^2\dX\rd\tau + \int_t^{t+1} \int_{\mathbb{R}^N\times\{0\}} \Phi(v)\dx\rd\tau \leq C,
\end{equation}
for some constant $C > 0$ depending only on $N$, $a$ and $U_0$.
\end{cor}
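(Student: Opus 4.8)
The plan is to derive both estimates directly from Lemma \ref{Lemma:DerivEn}, which gives $\E' = -2\I$ in $\mathcal{D}'(\mathbb{R}_+)$, together with the level estimate $\JJ_\vep(V_\vep) \leq C\vep$ from Lemma \ref{lem:ExistenceMin}. First I would observe that since $\E \geq 0$ (the integrand $\I + \R$ is nonnegative) and $\E' = -2\I \leq 0$, the function $\E$ is nonincreasing, so $\E(t) \leq \E(0) = \JJ_\vep(V) \leq C\vep$ for all $t \geq 0$. Integrating the identity $\E' = -2\I$ over $(0,\infty)$ and using $\E(t) \to 0$ as $t \to +\infty$ (which follows from $\E \geq 0$, monotonicity, and $e^{-t}\E(t) \to 0$, or more directly from $\E(t) \leq e^t \int_t^\infty e^{-\tau}[\I+\R]\,d\tau$ together with integrability of the tail), we get
\[
2\int_0^\infty \I(\tau)\,d\tau = \E(0) - \lim_{t\to\infty}\E(t) = \E(0) \leq C\vep,
\]
which is precisely \eqref{eq:BoundI} after recalling $\I(\tau) = \int_{\RR^{N+1}} |y|^a |\partial_\tau V|^2\,dX$.

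For \eqref{eq:BoundR}, the idea is to bound $\int_t^{t+1} \R(\tau)\,d\tau$ using the definition of $\E$ at the point $t$. Indeed, from \eqref{eq:Energy} and the nonnegativity of $\I$,
\[
e^{-(t+1)}\int_t^{t+1} \big[\I(\tau) + \R(\tau)\big]\,d\tau \leq \int_t^{t+1} e^{-\tau}\big[\I(\tau)+\R(\tau)\big]\,d\tau \leq \int_t^\infty e^{-\tau}\big[\I(\tau)+\R(\tau)\big]\,d\tau = e^{-t}\E(t),
\]
so $\int_t^{t+1}\big[\I(\tau)+\R(\tau)\big]\,d\tau \leq e\,\E(t) \leq e\,\E(0) \leq Ce\,\vep$. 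Since $\R(\tau) = \vep\int_{\RR^{N+1}} |y|^a|\nabla V|^2\,dX + \vep\int_{\RR^N}\Phi(v)\,dx$, dividing through by $\vep$ yields
\[
\int_t^{t+1}\int_{\RR^{N+1}} |y|^a|\nabla V|^2\,dX\,d\tau + \int_t^{t+1}\int_{\RR^N\times\{0\}}\Phi(v)\,dx\,d\tau \leq \frac{1}{\vep}\int_t^{t+1}\R(\tau)\,d\tau \leq Ce,
\]
which is \eqref{eq:BoundR} with $C$ depending only on $N$, $a$ and $U_0$ (through the constant in \eqref{eq:LevelEst}).

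The only genuinely delicate point is justifying $\lim_{t\to\infty}\E(t) = 0$, needed to close \eqref{eq:BoundI}; this is where some care is warranted, but it is not really an obstacle: since $t \mapsto e^{-t}[\I(t)+\R(t)] \in L^1(\RR_+)$ as noted after \eqref{eq:DefIandR}, the tail $\int_t^\infty e^{-\tau}[\I(\tau)+\R(\tau)]\,d\tau \to 0$, and multiplying by $e^t$ would be problematic only if the tail decayed slower than $e^{-t}$ — but $\E$ is nonincreasing and nonnegative, hence converges to some $\ell \geq 0$, and if $\ell > 0$ then $e^{-t}\E(t) \sim \ell e^{-t}$, consistent; to rule out $\ell > 0$ one uses that $\E' = -2\I$ with $\I \in L^1$ forces $\E$ to have a finite limit and that limit must be $0$ because otherwise $\E(t) \geq \ell$ for all $t$ would contradict $\E(t) = e^t\int_t^\infty e^{-\tau}[\I+\R] \leq e^t \cdot \|\,\cdot\,\|_{L^1(t,\infty)}$ with the $L^1$-norm going to zero faster than $e^{-t}$... more cleanly, integrating $\E' = -2\I$ gives $\E(t) = \E(0) - 2\int_0^t \I$, and letting $t \to \infty$ shows the limit equals $\E(0) - 2\int_0^\infty \I \geq 0$, so in fact $\int_0^\infty \I \leq \E(0)/2$ holds regardless, which is already enough for \eqref{eq:BoundI}. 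I would present this last version as it sidesteps the vanishing-at-infinity discussion entirely.
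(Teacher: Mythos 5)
Your argument is correct and coincides with the paper's proof: both rest on $\E'=-2\I$, the monotonicity bound $\E(t)\leq \E(0)=\JJ_\vep(V)\leq C\vep$, the identity $2\int_0^t \I = \E(0)-\E(t)\leq \E(0)$ (which, as you note, makes any discussion of $\E(t)\to 0$ unnecessary), and the comparison $\int_t^{t+1}\R\,\rd\tau\leq e\,\E(t)$ followed by cancelling the factor $\vep$ in the definition of $\R$. The preliminary detour about the limit of $\E$ at infinity is harmless since you discard it in favor of the clean version, which is exactly the paper's argument.
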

\begin{proof} First, recalling that $\E \in C(\overline{\mathbb{R}_+})$ and $\I \geq 0$, a direct integration of \eqref{eq:DerivEn2} shows that $\E(t) \leq \E(0)$ for all $t \geq 0$. Moreover, since $\E(0) = \JJ_\vep(V)$ and \eqref{eq:LevelEst}, we obtain
\begin{equation}\label{eq:EnergyBoundInner}
\E(t) \leq \JJ_\vep(V) \leq C\varepsilon,
\end{equation}
for all $t \geq 0$, where $C > 0$ depends only on $N$, $a$ and $V_0$.

From \eqref{eq:DerivEn2}, \eqref{eq:EnergyBoundInner} and that $\E \geq 0$, we deduce
\[
2\int_0^t \I(\tau)\dtau = \E(0) - \E(\tau) \leq \E(0) \leq C\varepsilon,
\]
for all $t \geq 0$. Consequently, \eqref{eq:BoundI} follows by passing to the limit as $t \to +\infty$ and the first definition in \eqref{eq:DefIandR}.

To prove the second part of the statement, we fix $t \geq 0$ and we notice that \eqref{eq:EnergyBoundInner} implies
\[
\int_t^{t+1} \R(\tau) \dtau \leq e^{t+1} \int_t^{t+1} e^{-\tau} \R(\tau) \dtau \leq e \E(t) \leq e C\varepsilon.
\]
The thesis follows by the second definition in \eqref{eq:DefIandR}.
\end{proof}
\begin{proof}[Proof of Proposition \ref{thm:UnifEnergyEst}] Let $U$ be a minimizer of $\FF_\vep$ in $\UU_0$ and define $V(X,t) := U(X,\varepsilon t)$. Then, $V$ is a minimizer of $\JJ_\vep$ in $U_0$.

As a first consequence, we immediately see that \eqref{eq:EnBound1} follows changing variable ($t = \varepsilon \tau$) in \eqref{eq:BoundI}. Similar, the same change of variable in \eqref{eq:BoundR} yields
\[
\int_{\varepsilon t}^{\varepsilon t + \varepsilon}\int_{\mathbb{R}^{N+1}} |y|^a |\nabla V|^2\dX\rd\tau + \int_{\varepsilon t}^{\varepsilon t + \varepsilon} \int_{\mathbb{R}^N} \Phi(v)\dx\rd\tau \leq C\varepsilon,
\]
for all $t \geq 0$. Now, let $R \geq \varepsilon$ and define $k = \lceil R/\varepsilon \rceil$. In view of the arbitrariness of $t$, we can apply the above estimate for $t = j$ and sum over $j = 0,\dots,k-1$ to obtain
\[
\int_0^{k\varepsilon}\int_{\mathbb{R}^{N+1}} |y|^a |\nabla V|^2\dX\rd\tau + \int_0^{k\varepsilon} \int_{\mathbb{R}^N} \Phi(v)\dx\rd\tau \leq Ck\varepsilon.
\]
Since $\varepsilon \leq R \leq k\varepsilon$, \eqref{eq:EnBound2} follows.
\end{proof}
\begin{proof}[Proof of Proposition \ref{prop:weaklimit}] In view of \eqref{eq:EnBound1} and \eqref{eq:EnBound2}, $\{U_\vep\}_{\vep \in (0,1)}$ is equibounded in $H^{1,a}(\mathbb{Q}_R^+)$ for every fixed $R > 0$. Consequently, the usual diagonal procedure shows the existence of a sequence $\varepsilon_j \to 0$ and $U \in \UU_0$ such that the first and the third limit in \eqref{eq:WeakLimitProp} are satisfied (here we may also use Sobolev embedding and trace theorems as in Lemma \ref{lem:ExistenceMin}). The second limit in \eqref{eq:WeakLimitProp} follows by \cite[Corollary 8]{Simon1987:art}, up to passing to another subsequence, since for every $R >0$, $\{U_{\vep_j}\}_{j\in \NN}$ is uniformly bounded in $L^2(0,R^2:H^{1,a}(\BB_R))$ and $\{\partial_t U_{\vep_j}\}_{j\in \NN}$ is uniformly bounded in $L^2(0,R^2:L^{2,a}(\BB_R))$. 

Finally, \eqref{eq:EulerParabolic} follows by passing into the limit as $j \to +\infty$ into \eqref{eq:EulerEqMinFF} (with $\vep = \vep_j$) and using \eqref{eq:WeakLimitProp}. 
\end{proof}

%
%
%
%
%

%
\section{Uniform \texorpdfstring{$L^{2,a} \to L^\infty$}{} estimates}\label{Sec:L2LinfEst}
This section is devoted to the proof of some \emph{local} and \emph{uniform} $L^{2,a} \to L^\infty$ estimates for weak solutions to the linear problem \eqref{eq:PerProbEpsFf} (the notion of weak solution is introduced in Definition \ref{def:WeakSolFf} below). 
\begin{prop}\label{prop:L2LinfEstimate}(Uniform $L^{2,a} \to L^\infty$ estimate)
Let $N \geq 1$, $a \in (-1,1)$ and $(p,q)$ satisfying \eqref{eq:AssumptionsExponents}. Then there exists a constant $C > 0$ depending only on $N$, $a$ and $q$ such that every family $\{U_\varepsilon\}_{\varepsilon \in (0,1)}$ of weak solutions to problem \eqref{eq:PerProbEpsFf} in $\QQ_1$ in the sense of Definition \ref{def:WeakSolFf} satisfy
\begin{equation}\label{eq:L2LinfEstimate}
\begin{aligned}
\| U_\vep \|_{L^\infty(\QQ_{1/2})} \leq C \big( \| U_\vep \|_{L^{2,a}(\QQ_1)} + \|F_\vep\|_{L^{p,a}(\QQ_1)} + \|f_\vep\|_{L_\infty^q(Q_1)} \big),
\end{aligned}
\end{equation}
for every $\vep \in (0,1)$.
\end{prop}
We divide the proof of Proposition \ref{prop:L2LinfEstimate} in two main steps: we establish an energy estimate for solutions and nonnegative subsolutions (Lemma \ref{lem:EnergyEstimate}) and then we exploit it to prove a ``no-spikes'' estimate (see Lemma \ref{lem:NoSpikesEstimate}).

Before moving forward, we fix some important notations and give the definition of weak solution to problem \eqref{eq:PerProbEpsFf}. Let $N \geq 1$ and $a \in (-1,1)$. We will always consider exponents $p,q \in \RR$ satisfying
\begin{equation}\label{eq:AssumptionsExponents}
p > \bar{p} := \max\Big\{ \frac{N+3+a}{2},2 \Big\} \qquad q > \frac{N}{1-a}.
\end{equation}
We anticipate that the assumption $p > 2$ is needed only when $N=1$ and $a \in (-1,0]$ (for all other values of $N$ and $a$ we have $\bar{p} = (N+3+a)/2$): this is due to the fact that this range of parameters is critical for the Sobolev inequality (cf. Theorem \ref{thm:SobEmbedLocalEll} and Theorem \ref{thm:SobEmbedLocalPar}). 

\medskip

Let us proceed with the notion of weak solutions to \eqref{eq:PerProbEpsFf}. It is related to Definition \ref{def:WeakSolReacDiffProb}, for problem \eqref{eq:ParReacDiff}: let $R > 0$, $\vep \in (0,1)$, $(p,q)$ satisfying \eqref{eq:AssumptionsExponents}, $F_\vep \in L^{p,a}(\BB_R^+\times(-R^2,R^2))$ and $f_\vep \in L_\infty^q(Q_R)$
\footnote{If $Q := B\times I$, $\|f\|_{L_\infty^q(Q)} := \esssup_{t\in I} \|f(t)\|_{L^q(B)}$, and $L_\infty^q(Q)$ is the closure of $C^\infty(Q)$ w.r.t. $\|\cdot\|_{L_\infty^q(Q)}$.}. 
We say that $W_\vep$ is a weak solution to \eqref{eq:PerProbEpsFf} in $\BB_R^+\times(-R^2,R^2)$ if

\noindent $\bullet$ $W_\vep \in L^2(-R^2,R^2: H^{1,a}(\BB_R^+))$ with $\partial_t W_\vep \in L^2(-R^2,R^2:L^{2,a}(\BB_R^+))$.

\noindent $\bullet$ $W_\vep$ satisfies
\[
\int_{-R^2}^{R^2}\int_{\BB_R^+} y^a ( \varepsilon \partial_t W_\vep \partial_t \eta + \partial_t W_\vep \eta + \nabla W_\vep \cdot\nabla\eta - F_\vep \eta) \dX\rd t - \int_{-R^2}^{R^2}\int_{B_R} f_\vep \eta|_{y=0} \dx \rd t = 0,
\]
for every $\eta \in C_0^\infty(\QQ_R)$.

Even in the setting of problem \eqref{eq:PerProbEpsFf}, it is convenient to simplify the notation as follows. If $W_\vep$ is a weak solution, we notice that the even extension
\[
U_\vep(x,y,t) :=
\begin{cases}
W_\vep(x,y,t) \quad &\text{ if } y \geq 0 \\
W_\vep(x,-y,t) \quad &\text{ otherwise},
\end{cases}
\]
satisfies
\[
\int_{\mathbb{Q}_R} |y|^a ( \varepsilon \partial_t U_\vep \partial_t \eta + \partial_t U_\vep \eta + \nabla U_\vep \cdot\nabla\eta - \tilde{F}_\vep \eta) \dX\rd t - \int_{Q_R} \tilde{f}_\vep \eta|_{y=0} \dx \rd t = 0,
\]
for every $\eta \in C_0^\infty(\QQ_R)$, where $\tilde{F}_\vep$ is the even extension of $F_\vep$ and $\tilde{f}_\vep := 2 f_\vep$. For this reason, we will always work with  weak solutions defined in the whole cylinder $\QQ_R$ and, to recover the information on $W_\vep$, we restrict them to the upper-half cylinder $\QQ_R\cap\{y>0\}$.
\begin{defn}\label{def:WeakSolFf} Let $N \geq 1$, $a \in (-1,1)$, $R > 0$, $\vep \in (0,1)$, $(p,q)$ satisfying \eqref{eq:AssumptionsExponents}, $F_\vep \in L^{p,a}(\QQ_R)$ and $f_\vep \in L_\infty^q(Q_R)$. We say that $U_\vep$ is a weak subsolution (supersolution) of \eqref{eq:PerProbEpsFf} in $\QQ_R$ if

\noindent $\bullet$ $U_\vep \in L^2(-R^2,R^2; H^{1,a}(\mathbb{B}_R))$ with $\partial_t U_\varepsilon \in L^2(-R^2,R^2:L^{2,a}(\mathbb{B}_R))$.

\noindent $\bullet$ $U_\vep$ satisfies the differential inequality
\[
\int_{\mathbb{Q}_R} |y|^a ( \varepsilon \partial_t U_\vep \partial_t \eta + \partial_t U_\vep \eta + \nabla U_\vep \cdot\nabla\eta - \tilde{F}_\vep \eta) \dX\rd t - \int_{Q_R} \tilde{f}_\vep \eta|_{y=0} \dx \rd t \leq (\geq) \; 0,
\]
for every nonnegative $\eta \in C_0^\infty(\QQ_R)$.

We say that $U_\vep$ is a weak solution in $\QQ_R$ if it is both a weak subsolution and supersolution, that is

\noindent $\bullet$ $U_\vep \in L^2(-R^2,R^2; H^{1,a}(\mathbb{B}_R))$ with $\partial_t U_\varepsilon \in L^2(-R^2,R^2:L^{2,a}(\mathbb{B}_R))$.

\noindent $\bullet$ $U_\vep$ satisfies
\begin{equation}\label{eq:WeakEquation}
\int_{\mathbb{Q}_R} |y|^a ( \varepsilon \partial_t U_\vep \partial_t \eta + \partial_t U_\vep \eta + \nabla U_\vep \cdot\nabla\eta - \tilde{F}_\vep \eta) \dX\rd t - \int_{Q_R} \tilde{f}_\vep \eta|_{y=0} \dx \rd t = 0,
\end{equation}
for every $\eta \in C_0^\infty(\mathbb{Q}_R)$.
\end{defn}
\begin{rem} In the whole section, even if not mentioned, we will always work with weak solutions (or subsolutions) in the sense of Definition \ref{def:WeakSolFf}. Further, to simplify the reading, we drop the notations $\tilde{F}_\vep$ and $\tilde{f}_\vep$, writing $F_\vep$ and $f_\vep$ instead. We stress that this does not change our estimates, since the extra factor $2$ can be easily reabsorbed through a dilation of the variables $(X,t)$.  
\end{rem}
\begin{rem}\label{rem:ScalingProp} (Scaling) In the proof of Proposition \ref{thm:CalphaBound} we will use that if $R > 0$ and $U_\vep$ is a weak solution in $\QQ_R$, then the function
\[
V_{\vep,R}(X,t) := U_\vep (R X,R^2 t), \quad (X,t) \in \QQ_1,
\]
satisfies
\[
\int_{\mathbb{Q}_1} |y|^a ( \tfrac{\varepsilon}{R^2} \partial_t V_{\vep,R} \partial_t \eta + \partial_t V_{\vep,R} \eta + \nabla V_{\vep,R} \cdot\nabla\eta - F_{\vep,R} \eta) \dX\rd t - \int_{Q_1} f_{\vep,R} \eta|_{y=0} \dx \rd t = 0,
\]
for every $\eta \in C_0^\infty(\QQ_1)$, that is $V_{\vep,R}$ is a weak solution in $\QQ_1$ (replacing $\vep$ with $\tfrac{\vep}{R^2}$), with
\[
\begin{aligned}
F_{\vep,R}(X,t) &:= R^2 F_\vep(RX,R^2t),  \\
f_{\vep,R}(x,t) &:= R^{1-a} f_\vep(Rx,R^2t).
\end{aligned}
\]
%
%
%
%
\end{rem}
We are ready to prove the energy estimate for families of nonnegative weak solutions (the same proof applies to family of weak solutions, see Remark \ref{rem:EnEst1}).
\begin{lem}\label{lem:EnergyEstimate} (Energy estimate)
Let $N \geq 1$, $a \in (-1,1)$ and $(p,q)$ satisfying \eqref{eq:AssumptionsExponents}. Then there exists a constant $C > 0$ depending only on $N$, $a$ and $q$ such that for every $r \in (\tfrac12,1]$, $\varrho \in [\tfrac12,r)$ and every family $\{U_\varepsilon\}_{\varepsilon \in (0,1)}$ of nonnegative subsolutions in $\QQ_1$ such that
\[
\|F_\vep\|_{L^{p,a}(\QQ_1)} + \|f_\vep\|_{L_\infty^q(Q_1)} \leq 1
\]
for every $\vep \in (0,1)$, we have
\begin{equation}\label{eq:EnergyEstimate}
\begin{aligned}
\esssup_{t \in (-\varrho^2,\varrho^2)} &\int_{\mathbb{B}_\varrho} |y|^a U_\vep^2(X,t) \dX + \int_{\mathbb{Q}_\varrho} |y|^a|\nabla U_\varepsilon|^2 \dX\rd t + \vep \int_{\mathbb{Q}_\varrho} |y|^a|\partial_t U_\varepsilon|^2 \dX\rd t \\
& \leq C \Big[ (r-\varrho)^{-2} \int_{\mathbb{Q}_r} |y|^a U_\varepsilon^2 dX\rd t + \|U_\vep\|_{L^{p',a}(\QQ_r)} \Big].
\end{aligned}
\end{equation}
\end{lem}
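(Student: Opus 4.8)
The plan is to prove the Caccioppoli-type estimate \eqref{eq:EnergyEstimate} by testing the weak (sub)solution inequality with a carefully chosen cut-off multiplied by $U_\vep$ itself, while being mindful that the equation is \emph{elliptic} in $(X,t)$ for fixed $\vep$ so a naive parabolic test function (Lipschitz in time, vanishing at the lower cap only) is \emph{not} admissible in $H_0^{1,a}(\QQ_r)$ — it must vanish on the whole parabolic boundary including the top. This is exactly the obstruction flagged in Remark~\ref{rem:ParEE} and is the main point of the lemma: one cannot run the standard De Giorgi/Moser energy estimate. I would fix a smooth spatial cut-off $\xi = \xi(X)$ with $\xi \equiv 1$ on $\BB_\varrho$, $\supp \xi \subset \BB_r$, $|\nabla \xi| \lesssim (r-\varrho)^{-1}$, and a temporal cut-off $\chi = \chi(t)$ that is Lipschitz, equals $1$ on a subinterval and vanishes near both endpoints $\pm r^2$, again with $|\chi'|\lesssim (r-\varrho)^{-2}$. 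The admissible test function is then $\eta = U_\vep\, \xi^2\, \chi^2$ (after the usual Steklov-averaging in $t$ to justify the computation, since $\partial_t U_\vep \in L^{2,a}$ only), which does lie in $H_0^{1,a}(\QQ_r)$ because it vanishes at $t = \pm r^2$.

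Next I would expand the four terms. The gradient term yields the good term $\int |y|^a |\nabla U_\vep|^2 \xi^2\chi^2$ plus a cross term absorbed by Young's inequality into a $(r-\varrho)^{-2}\int_{\QQ_r}|y|^a U_\vep^2$ contribution. The two time-derivative terms are the delicate ones: $\vep\int |y|^a \partial_t U_\vep\, \partial_t(U_\vep\xi^2\chi^2) = \tfrac{\vep}{2}\int |y|^a \partial_t(U_\vep^2)\xi^2\chi^2 + \vep\int|y|^a(\partial_t U_\vep)^2\xi^2\chi^2$; the first piece integrates by parts in $t$ (boundary terms vanish by the choice of $\chi$) to give $-\vep\int|y|^a U_\vep^2 \xi^2 \chi\chi'$, controlled by $\vep(r-\varrho)^{-2}\int_{\QQ_r}|y|^a U_\vep^2 \le (r-\varrho)^{-2}\int_{\QQ_r}|y|^a U_\vep^2$ since $\vep<1$, and the second piece is the good term $\vep\int|y|^a(\partial_t U_\vep)^2$. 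The first-order drift term $\int|y|^a \partial_t U_\vep\, (U_\vep\xi^2\chi^2) = \tfrac12\int|y|^a\partial_t(U_\vep^2)\xi^2\chi^2$ likewise integrates by parts to $-\tfrac12\int|y|^a U_\vep^2\xi^2\chi\chi'$, again absorbed. The forcing terms $\int|y|^a F_\vep\, U_\vep\xi^2\chi^2$ and $\int f_\vep\, u_\vep\xi^2\chi^2|_{y=0}$ are estimated using $\|F_\vep\|_{L^{p,a}} \le 1$, $\|f_\vep\|_{L^q_\infty}\le 1$: Hölder in $(X,t)$ gives $\lesssim \|U_\vep\|_{L^{p',a}(\QQ_r)}$ for the bulk term, while the boundary term is handled by the weighted trace inequality (Appendix~\ref{App:WeightedSpaces}) together with the condition $q > \tfrac{N}{1-a}$, interpolated and absorbed into the gradient good term plus a lower-order $\|U_\vep\|_{L^{p',a}}$ contribution — this is where the precise exponent assumptions \eqref{eq:AssumptionsExponents} enter.

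The remaining subtlety is that the "good" esssup-in-time term $\esssup_t \int_{\BB_\varrho}|y|^a U_\vep^2$ does \emph{not} appear automatically from this test function, because $\chi$ vanishes at the top and we only get control on the interior of the time interval; here I would use the following device. For each $\tau \in (-\varrho^2,\varrho^2)$ pick $\chi = \chi_\tau$ equal to $1$ on a neighborhood of $\tau$ and supported in $(-r^2,r^2)$, but now \emph{not} necessarily symmetric — take $\chi_\tau$ monotone increasing up to $\tau$ and then cut off to the right just past $\tau$; integrating the identity $\tfrac12\partial_t(\int|y|^a U_\vep^2\xi^2) \chi^2$ only against the increasing part and using that everything is nonnegative recovers $\int_{\BB_\varrho}|y|^a U_\vep^2(\cdot,\tau)$ up to the already-controlled terms, uniformly in $\tau$. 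Finally I collect the good terms on the left, note the constant depends only on $N$, $a$, $q$ (through the trace constant), and conclude \eqref{eq:EnergyEstimate}. The hard part is precisely the bookkeeping that makes the $\vep$-weighted second time-derivative term \emph{help} rather than hurt — it is nonnegative and appears on the good side, which is what lets the argument survive the degeneration $\vep \to 0$, whereas any attempt to discard it or to use a one-sided-in-time test function (as in the classical parabolic proof) fails because such functions are not in $H_0^{1,a}(\QQ_r)$.
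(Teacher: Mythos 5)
Your overall framework (test with $\eta=U_\vep\xi^2\chi^2$, exploit that $\vep\int|y|^a(\partial_t U_\vep)^2\xi^2\chi^2$ is a good term, absorb the cross terms, treat $F_\vep$ by H\"older and $f_\vep$ by the weighted trace inequality and interpolation using $q>\tfrac{N}{1-a}$) matches the paper's Step 2 and its handling of the forcing terms. But there is a genuine gap exactly at the point you yourself flag as the heart of the lemma: the recovery of $\esssup_t\int_{\BB_\varrho}|y|^a U_\vep^2(\cdot,t)\dX$. Your device — a cutoff $\chi_\tau$ that drops steeply from $1$ to $0$ on $(\tau,\tau+\mu)$ — does not survive the $\vep\,\partial_t U_\vep\,\partial_t\eta$ term. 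Carrying out the integration by parts, the time-derivative contribution is $-\tfrac12\int|y|^aU_\vep^2\xi^2\,\tfrac{\rd}{\rd t}\big(\chi_\tau^2+2\vep\chi_\tau\chi_\tau'\big)\dX\rd t$; with a linear ramp of width $\mu$ the kink at $t=\tau$ produces the boundary term with coefficient $\tfrac{\vep}{\mu}$ (not $\tfrac12$), and the interior produces the error $-\tfrac{\vep}{\mu^2}\int_\tau^{\tau+\mu}\!\int_{\BB_1}|y|^aU_\vep^2\xi^2$. If $\mu\sim\vep$ you obtain $\int|y|^aU_\vep^2(\tau)\xi^2\leq \mathrm{RHS}+\esssup_t\int|y|^aU_\vep^2\xi^2$ with coefficient $1$ on the right, which cannot be absorbed; if $\mu\gg\vep$ the recovered coefficient $\tfrac{\vep}{\mu}$ degenerates and, after dividing, the controlled terms blow up like $\tfrac{\mu}{\vep}$. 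The appeal to ``everything is nonnegative'' does not help: the quadratic piece $\vep(\partial_t U_\vep)^2\xi^2\chi^2$ is indeed good, but the cross term $\vep\,\partial_t U_\vep\,U_\vep\,\xi^2(\chi_\tau^2)'$ has no sign and is precisely what generates the $\vep/\mu$ obstruction. (There is also a small algebra slip in your expansion — the first piece should carry $(\chi^2)'$, not $\chi^2$ — but that is cosmetic compared to the above.)

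The paper resolves this with an $\vep$-adapted construction that your sketch is missing: fix a time $t_\ast$ at which $\int_{\BB_\varrho}|y|^aU_\vep^2(\cdot,t_\ast)$ is at least half the essential supremum, take the cutoff equal to $1$ on $[-\varrho^2,t_\ast]$, and on $(t_\ast,r^2)$ choose the tail $\phi(t)=\alpha_\ast+(1-\alpha_\ast)e^{(t_\ast-t)/(2\vep)}$, i.e.\ the solution of $\phi+2\vep\phi'=\alpha_\ast$ with $\phi(r^2)=0$. Then the dangerous combination $\psi(\psi+2\vep\partial_t\psi)$ is identically the constant $\alpha_\ast$ on the tail, with $|\alpha_\ast|\leq 8\vep/(r-\varrho)$, which exactly compensates $|\phi'|\lesssim \vep^{-1}$, so the tail contribution is bounded by $(r-\varrho)^{-2}\int_{\QQ_r}|y|^aU_\vep^2$ while the matching boundary term at $t_\ast$ appears with full weight. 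Without this (or an equivalent mechanism making $\chi+2\vep\chi'$ uniformly small on the decreasing part while still reaching $0$ at the top), your argument does not close, so as written the proof of the esssup bound is incomplete.
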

\begin{proof} Let $U = U_\varepsilon$, $F = F_\vep$, $f = f_\vep$ and $\tfrac{1}{2} \leq \varrho < r \leq 1$. Since $U$ is a weak subsolution, we may assume $F,f \geq 0$ (up to replacing them with $F_+$ and $f_+$, respectively) and so, by Lemma \ref{lem:IneqTruncations} (part (ii)), we may also assume $U \geq 1$ in $\QQ_1$ (up to consider the subsolution $\max\{U,1\}$ instead of $U$).

Let $\psi$ be a Lipschitz function vanishing on $\partial \mathbb{Q}_1$, which will be chosen later. Testing the differential inequality of $U$ with $\eta = U \psi^2$, we easily obtain the differential inequality
\begin{equation}\label{eq:L2LinfFirstDiffInequality}
\begin{aligned}
\tfrac{1}{2} &\int_{\mathbb{Q}_1} |y|^a \partial_t(U^2)\psi (\psi + 2\varepsilon\partial_t\psi)\dX\rd t +  \int_{\mathbb{Q}_1} |y|^a |\nabla U|^2 \psi^2 \dX\rd t + \varepsilon \int_{\mathbb{Q}_1} |y|^a |\partial_t U|^2 \psi^2 \dX\rd t \\
&\leq - 2\int_{\mathbb{Q}_1} |y|^a U \psi \nabla U \cdot \nabla \psi \dX\rd t + \int_{\mathbb{Q}_1} |y|^a F U \psi^2\dX\rd t + \int_{Q_1} f u \psi^2|_{y=0} \dX\rd t,
\end{aligned}
\end{equation}
where we recall that $u = U|_{y=0}$ in the sense of traces and $\psi|_{y=0}(x,t) = \psi(x,0,t)$.

The energy inequality \eqref{eq:EnergyEstimate} will be obtained combing two different bounds that we prove in two separate steps.

\smallskip

\emph{Step 1.} We prove that
\begin{equation}\label{eq:L2LinfFundEst1}
\esssup_{t \in (-\varrho^2,\varrho^2)} \int_{\mathbb{B}_\varrho} |y|^a U^2(X,t) \dX \leq \frac{\tilde{c}}{(r-\varrho)^2} \int_{\mathbb{Q}_r} |y|^a U^2 \rd X\rd t + \|U\|_{L^{p',a}(\QQ_r)},
\end{equation}
for some $\tilde{c} > 0$ depending only on $N$, $a$ and $q$.

Fix $t_\ast \in [-\varrho^2,\varrho^2]$ such that
\begin{equation}\label{eq:LinfL2BoundsupL2V2}
\int_{\mathbb{B}_\varrho} |y|^a U^2(X,t_\ast) \dX \geq \tfrac{1}{2} \, \esssup_{t \in (-\varrho^2,\varrho^2)} \int_{\mathbb{B}_\varrho} |y|^a U^2(X,t) \dX.
\end{equation}
Taking into account the relation
\[
|\nabla U|^2 \psi^2 + 2 U \psi \nabla U \cdot \nabla \psi = \sum_{j=1}^{N+1} \Big( \psi \partial_iU  + U \partial_i\psi \Big)^2 - |\nabla \psi|^2 U^2,
\]
we rewrite \eqref{eq:L2LinfFirstDiffInequality} neglecting the nonnegative term in the l.h.s. involving $\partial_t U$ to deduce
\begin{equation}\label{eq:L2LinfDiffIneqForEssSup}
\begin{aligned}
&\int_{\mathbb{Q}_1} |y|^a \partial_t(U^2)\psi (\psi + 2\varepsilon \partial_t \psi) \dX\rd t + \int_{\mathbb{Q}_1} |y|^a |\nabla U|^2 \psi^2 \dX\rd t \\
& \quad \leq 2 \int_{\mathbb{Q}_1} |y|^a |\nabla \psi|^2 U^2 \dX\rd t + \int_{\mathbb{Q}_1} |y|^a F U \psi^2\dX\rd t + \int_{Q_1} f u \psi^2|_{y=0} \dx\rd t.
\end{aligned}
\end{equation}
Let $\varphi \in C_0^\infty(\mathbb{B}_r)$, $0 \leq \varphi \leq 1$ with
\begin{equation}\label{eq:L2LinfPropVp}
\varphi = 1 \quad \text{ in } \mathbb{B}_\rho, \qquad |\nabla \varphi| \leq \frac{c_0}{r -\varrho},
\end{equation}
for some $c_0 > 0$ depending only on $N$, and $\phi = \phi(t)$ be defined by
\[
\phi(t) :=
\begin{cases}
0                                                          \quad &t \in [-1,-r^2]\cup[r^2,1] \\
\tfrac{r^2 + t}{r^2 -\varrho^2} \quad &t \in (-r^2,-\varrho^2) \\
1 \quad &t \in [-\varrho^2,t_\ast] \\
\alpha_\ast + (1-\alpha_\ast) e^{\frac{t_\ast-t}{2\varepsilon}} \quad &t \in (t_\ast,r^2),
\end{cases}
\qquad\qquad  \alpha_\ast := -\frac{1}{e^{\frac{r^2-t_\ast}{2\varepsilon}} - 1} < 0.
\]
An immediate computation shows that $\phi(-r^2) = \phi(r^2) = 0$, $\phi(t_\ast) = 1$, $0 \leq \phi \leq 1$ and
\begin{equation}\label{eq:L2LinfPropPh}
\phi + 2\varepsilon\phi' = \alpha_\ast \quad \text{ in } (t_\ast,r^2).
\end{equation}
Furthermore, since $\tfrac{1}{e^z-1} \leq \tfrac{2}{z}$ for every $z > 0$, and $t_\ast \leq \varrho^2$, we have
\begin{equation}\label{eq:L2LinfPropAaPhiPr}
|\alpha_\ast| \leq \frac{4\varepsilon}{r^2-t_\ast} \leq \frac{8\varepsilon}{r-\varrho}, \qquad |\phi'| \leq \frac{1+|\alpha_\ast|}{2\varepsilon} \quad \text{ in } (t_\ast,r^2).
\end{equation}
Now, choose $\psi(X,t) = \varphi(X) \phi(t)$ and write
\[
\begin{aligned}
\int_{\mathbb{Q}_1} |y|^a \partial_t(U^2)\psi (\psi + 2\varepsilon \partial_t \psi) \dX\rd t &= \int_{-1}^{-\varrho^2} \!\! \int_{\mathbb{B}_1} |y|^a \partial_t(U^2)\psi (\psi + 2\varepsilon \partial_t \psi) \dX\rd t\\
&\quad +  \int_{-\varrho^2}^{t_\ast} \int_{\mathbb{B}_1} |y|^a \partial_t(U^2)\psi (\psi + 2\varepsilon \partial_t \psi) \dX\rd t\\
&\quad + \int_{t_\ast}^{1} \int_{\mathbb{B}_1} |y|^a \partial_t(U^2)\psi (\psi + 2\varepsilon \partial_t \psi) \dX\rd t := I_1 + I_2 + I_3.
\end{aligned}
\]
Using the definitions of $\phi$ and $\phi'$ in $[-1,t_\ast]$, that $\varphi \in C_0^\infty(\mathbb{B}_r)$ with $0 \leq \varphi \leq 1$ and integrating by parts, we find
\[
\begin{aligned}
I_1 &= \int_{-r^2}^{-\varrho^2} \!\!\int_{\mathbb{B}_r} |y|^a \partial_t(U^2) \varphi^2 \phi (\phi + 2\varepsilon \phi') \dX\rd t \\
&\geq -2\int_{-r^2}^{-\varrho^2} (\phi\phi' + \varepsilon (\phi')^2 ) \int_{\mathbb{B}_r} |y|^a  U^2 \varphi^2  \dX\rd t + \int_{\mathbb{B}_r} |y|^a  U^2(X,-\varrho^2) \varphi^2  \dX \\
&\geq - \tfrac{8}{(r-\varrho)^2} \int_{\mathbb{Q}_r} |y|^a U^2 \dX\rd t +  \int_{\mathbb{B}_r} |y|^a  U^2(X,-\varrho^2) \varphi^2  \dX,
\end{aligned}
\]
and
\[
I_2 = \int_{-\varrho^2}^{t_\ast} \int_{\mathbb{B}_r} |y|^a \partial_t(U^2)\varphi^2 \dX\rd t = \int_{\mathbb{B}_r} |y|^a U^2(X,t_\ast)\varphi^2(X) \dX - \int_{\mathbb{B}_r} |y|^a U^2(X,-\varrho^2)\varphi^2(X) \dX.
\]
Further, by \eqref{eq:L2LinfPropPh} and \eqref{eq:L2LinfPropAaPhiPr},
\[
\begin{aligned}
I_3&= \alpha_\ast \int_{t_\ast}^{r^2} \!\! \int_{\mathbb{B}_r} |y|^a \partial_t(U^2) \varphi^2 \phi  \dX\rd t = |\alpha_\ast| \int_{\mathbb{B}_r} |y|^a U^2(X,t_\ast) \varphi^2(X) \dX + |\alpha_\ast| \int_{t_\ast}^{r^2} \!\! \int_{\mathbb{B}_r} |y|^a U^2 \varphi^2 \phi'  \dX\rd t \\
&\geq -|\alpha_\ast| \int_{t_\ast}^{r^2} \!\! \int_{\mathbb{B}_r} |y|^a U^2 \varphi^2 |\phi'|  \dX\rd t \geq - \tfrac{36}{(r-\varrho)^2} \int_{\mathbb{Q}_r} |y|^a U^2 \dX\rd t,
\end{aligned}
\]
and thus by \eqref{eq:LinfL2BoundsupL2V2} and \eqref{eq:L2LinfPropVp}
\begin{equation}\label{eq:L2LinfEst1}
I_1 + I_2 + I_3 \geq \tfrac{1}{2} \, \esssup_{t \in (-\varrho^2,\varrho^2)} \int_{\mathbb{B}_\varrho} |y|^a U^2(X,t) \dX - \tfrac{44}{(r-\varrho)^2} \int_{\mathbb{Q}_r} |y|^a U^2 \dX\rd t.
\end{equation}
Now let us estimate the terms into the r.h.s. of \eqref{eq:L2LinfDiffIneqForEssSup}. First, we have
\begin{equation}\label{eq:L2LinfEst3}
\int_{\mathbb{Q}_1} |y|^a |\nabla \psi|^2 U^2 \dX\rd t \leq \int_{\mathbb{Q}_r} |y|^a |\nabla \varphi|^2 \phi^2 U^2 \dX\rd t \leq \tfrac{c_0}{(r-\varrho)^2} \int_{\mathbb{Q}_r} |y|^a U^2 \rd X\rd t,
\end{equation}
thanks to \eqref{eq:L2LinfPropVp} and the fact that $\phi = 0$ in $[-1,-r^2]\cup[r^2,1]$. Second, by H\"older's inequality
\begin{equation}\label{eq:EnEstBoundLqFterm}
\int_{\mathbb{Q}_1} |y|^a F U \psi^2\dX\rd t  \leq \int_{\mathbb{Q}_r} |y|^a F U \dX\rd t \leq \|F\|_{L^{p,a}(\QQ_r)} \|U\|_{L^{p',a}(\QQ_r)} \leq \|U\|_{L^{p',a}(\QQ_r)},
\end{equation}
since $\|F\|_{L^{p,a}(\QQ_1)} \leq 1$ (and $F \geq 0$) by assumption.

We are left to estimate the trace term, that we reabsorb using the second term in the l.h.s. of \eqref{eq:L2LinfDiffIneqForEssSup}. Using that $u \geq 1$ and $f \geq 0$, applying H\"older's inequality and recalling that $\|f\|_{L_\infty^q(Q_1)} \leq 1$, we obtain
\[
\begin{aligned}
\int_{Q_1} f u \psi^2|_{y=0} \dx\rd t &\leq \int_{Q_1} f (u \psi|_{y=0})^2 \dx\rd t \leq \|f\|_{L_\infty^q(Q_1)} \|v^2\|_{L_1^{q'}(Q_1)} \leq \|v^2\|_{L_1^{q'}(Q_1)} = \|v\|_{L_2^{2q'}(Q_1)},
\end{aligned}
\]
where we have set $v := u \psi_{y=0}$, and $q'$ is the conjugate of $q$. Since $q > \tfrac{N}{1-a}$, we have $2 \leq 2q' \leq 2\tilde{\sigma}$, where 
$\tilde{\sigma} := \tfrac{N}{N-1+a}$ (cf. Theorem \ref{thm:TraceThm}) and so, by interpolation and Young's inequality, we obtain 
\[
\|v\|_{L_2^{2q'}(Q_1)} \leq \int_{-1}^1 \|v(\cdot,t)\|_{L^2(B_1)}^{2\vartheta} \|v(\cdot,t)\|_{L^{2\tilde{\sigma}}(B_1)}^{2(1-\vartheta)} \rd t \leq \delta \| v \|_{L^{2\tilde{\sigma}}_2(Q_1)}^2 + c_\delta \| v \|_{L^2_2(Q_1)},
\]
for every $\delta > 0$ and a suitable $c_\delta > 0$, satisfying $c_\delta \to +\infty$ as $\delta \to 0$ ($\vartheta \in (0,1)$ is given in the interpolation inequality and depends on $N$, $a$ and $q$). Now, by \eqref{eq:EmbeddingTrace}, the definition of $v$ and Cauchy-Schwartz's inequality, we have
\[
\begin{aligned}
\| v \|_{L^{2\tilde{\sigma}}_2(Q_1)}^2 &\leq c \int_{\mathbb{Q}_1} |y|^a |\nabla (U\psi)|^2 \rd X \rd t \leq 2c \Big( \int_{\mathbb{Q}_1} |y|^a |\nabla U|^2 \psi^2 \rd X \rd t + \int_{\mathbb{Q}_1} |y|^a U^2 |\nabla \psi|^2 \rd X \rd t \Big),
\end{aligned}
\]
for some $c > 0$ depending only in $N$ and $a$, while, by \eqref{eq:TraceIneqWithEps} and the Cauchy-Schwartz's inequality again, 
\[
\begin{aligned}
\| v \|_{L^2_2(Q_1)} &\leq c \Big( A^{\frac{1+a}{2}} \int_{\mathbb{Q}_1} |y|^a U^2 \psi^2 \rd X \rd t +  A^{-\frac{1-a}{2}} \int_{\mathbb{Q}_1} |y|^a |\nabla (U\psi)|^2 \rd X \rd t  \Big) \\
&\leq 2c \Big( A^{\frac{1+a}{2}} \int_{\mathbb{Q}_1} |y|^a U^2 \big( \psi^2 + |\nabla \psi|^2\big)  \rd X \rd t +  A^{-\frac{1-a}{2}} \int_{\mathbb{Q}_1} |y|^a |\nabla U|^2 \psi^2 \rd X \rd t  \Big)
\end{aligned}
\]
for every $A > 1$. Now, we fix $\delta \in (0,1)$, such that $2c\delta \leq \tfrac12$ and $A > 1$ such that $2cc_\delta A^{-\frac{1-a}{2}} \leq \tfrac12$ (notice that both $\delta$ and $A$ depend only on $N$, $a$ and $q$). Combing the last four inequalities with \eqref{eq:L2LinfDiffIneqForEssSup}, we obtain
\[
\begin{aligned}
&\int_{\mathbb{Q}_1} |y|^a \partial_t(U^2)\psi (\psi + 2\varepsilon \partial_t \psi) \dX\rd t \leq c \int_{\mathbb{Q}_1} |y|^a  U^2 \big( \psi^2 + |\nabla \psi|^2\big) \dX\rd t + \int_{\mathbb{Q}_1} |y|^a F U \psi^2\dX\rd t,
\end{aligned}
\]
for some new $c > 0$ depending only on $N$, $a$ and $q$, and thus \eqref{eq:L2LinfFundEst1} follows in light of \eqref{eq:L2LinfEst1}, \eqref{eq:L2LinfEst3} and \eqref{eq:EnEstBoundLqFterm}.

\smallskip

\emph{Step 2.} In this second step we show
\begin{equation}\label{eq:L2LinfFundEst2}
\int_{\mathbb{Q}_{\varrho}} |y|^a|\nabla U|^2 \dX\rd t + \vep \int_{\mathbb{Q}_{\varrho}} |y|^a|\partial_t U|^2 \dX\rd t \leq \frac{\overline{c}}{(r-\varrho)^2} \int_{\mathbb{Q}_r} |y|^a U^2 \dX\rd t + \|U\|_{L^{p',a}(\QQ_r)},
\end{equation}
for some $\overline{c} > 0$ depending only on $N$, $a$ and $q$.

Test \eqref{eq:L2LinfFirstDiffInequality} with $\psi_0(X,t) = \varphi(X) \phi_0(t)$, where $\varphi$ is as in \eqref{eq:L2LinfPropVp}, while $\phi_0 \in C_0^\infty([-r^2,r^2])$, $0 \leq \phi_0 \leq 1$ and
\begin{equation}\label{eq:Test2L2Linf}
\phi_0 = 1 \quad \text{ in } [-\varrho^2,\varrho^2], \qquad \|\phi_0'\|_{L^\infty(0,1)} \leq \frac{c_1}{r-\varrho},
\end{equation}
for some numerical constant $c_1 \geq 1$. Integrating by parts w.r.t. time the first term in the l.h.s. of \eqref{eq:L2LinfFirstDiffInequality}, it follows
\[
\int_{\mathbb{Q}_1} |y|^a \partial_t(U^2)\psi_0^2\dX\rd t = -2 \int_{\mathbb{Q}_r} |y|^a U^2 \varphi^2(X) \phi_0(t)\phi_0'(t) \dX\rd t,
\]
since $\varphi \in C_0^\infty(\mathbb{B}_r)$ and $\phi_0 \in C_0^\infty([-r^2,r^2])$ and thus, we may write \eqref{eq:L2LinfFirstDiffInequality} as
\[
\begin{aligned}
& \int_{\mathbb{Q}_1} |y|^a |\nabla U|^2 \psi_0^2 \dX\rd t + \varepsilon \int_{\mathbb{Q}_1} |y|^a |\partial_t U|^2 \psi_0^2 \dX\rd t  \\
\leq & -2\int_{\mathbb{Q}_1} |y|^a U \psi_0 \nabla U \cdot \nabla \psi_0 \dX\rd t - 2\vep \int_{\mathbb{Q}_1} |y|^a U \psi_0 \partial_t U \partial_t \psi_0 \dX\rd t \\
& + \int_{\mathbb{Q}_r} |y|^a U^2 \varphi^2(X) \phi_0(t)\phi_0'(t) \rd X\rd t + \int_{\mathbb{Q}_1} |y|^a F U \psi_0^2\dX\rd t + \int_{Q_1} f u \psi_0^2|_{y=0} \dx\rd t.
\end{aligned}
\]
Now, using Cauchy-Schwartz's inequality ($2AB \leq 2\ep A^2 + \tfrac{1}{2\ep} B^2$ with $\ep = \tfrac12$), \eqref{eq:L2LinfPropVp}-\eqref{eq:Test2L2Linf} and H\"older's inequality, we obtain
\begin{equation}\label{eq:L2LinfGradV}
\begin{aligned}
&\int_{\mathbb{Q}_1} |y|^a |\nabla U|^2 \psi_0^2 \dX\rd t + \varepsilon \int_{\mathbb{Q}_1} |y|^a |\partial_t U|^2 \psi_0^2 \dX\rd t \\
&\leq 4 \int_{\mathbb{Q}_r} |y|^a U^2 \left[\varphi^2 \phi_0 |\phi_0'| + |\nabla \varphi|^2 \phi_0^2 + \vep \varphi^2 (\phi_0')^2 \right] \rd X\rd t + \|U\|_{L^{p',a}(\QQ_r)} + \int_{Q_1} f u \psi_0^2|_{y=0} \dx\rd t\\
&\leq \frac{4(c_1 + c_1^2 + c_0^2)}{(r-\varrho)^2} \int_{\mathbb{Q}_r} |y|^a U^2 \dX\rd t + \|U\|_{L^{p',a}(\QQ_r)} + \int_{Q_1} f u \psi_0^2|_{y=0} \dx\rd t,
\end{aligned}
\end{equation}
and so, reabsorbing the trace term as we have done in \emph{Step 1}, and using \eqref{eq:L2LinfPropVp}-\eqref{eq:Test2L2Linf} again, \eqref{eq:L2LinfFundEst2} follows.

\smallskip

\emph{Step 3.} The energy estimate \eqref{eq:EnergyEstimate} follows by summing \eqref{eq:L2LinfFundEst1} and \eqref{eq:L2LinfFundEst2}, and taking $C = \tilde{c} + \overline{c}$.
\end{proof}
\begin{rem}\label{rem:EnEst1} The same proof works if we assume that $\{U_\varepsilon\}_{\varepsilon \in (0,1)}$ is a family of solutions, without sign restrictions.
\end{rem}
\begin{rem}\label{rem:ParEE} The energy estimate \eqref{eq:EnergyEstimate} is the main step for proving the $L^2 \to L^\infty$ bound \eqref{eq:L2LinfEstimate}. In light of what comes next, we believe it is important to compare the proof w.r.t. the classical parabolic framework (formally, the limit case $\vep=0$). Assume for simplicity $F=0$ and $f=0$ and consider a weak solution to 
\begin{equation}\label{eq:ParabolicEqRem}
|y|^a\partial_t U - \nabla\cdot(|y|^a\nabla U) = 0 \quad \text{ in } \QQ_1.
\end{equation}
Testing the weak formulation with $\eta := U \psi^2$, we obtain 
\[
\tfrac{1}{2} \int_{\mathbb{Q}_1} |y|^a \partial_t(U^2)\psi^2 \dX\rd t +  \int_{\mathbb{Q}_1} |y|^a |\nabla U|^2 \psi^2 \dX\rd t \leq - 2\int_{\mathbb{Q}_1} |y|^a U \psi \nabla U \cdot \nabla \psi \dX\rd t,
\]
which is \eqref{eq:L2LinfFirstDiffInequality} ``with $\vep = 0$''. For every fixed $-1<s<\tau<1$, one may choose $\psi^2(X,t) = \varphi^2(X) \chi_{[s,\tau]}(t)$ and so, integrating by parts w.r.t. time and using Young's inequality, it is not difficult to find
\begin{equation}\label{eq:EnEs1HeatEq}
\int_{\mathbb{B}_1} |y|^a U^2(X,\tau)\varphi^2(X) \dX - \int_{\mathbb{B}_1} |y|^a U^2(X,s)\varphi^2(X) \dX \leq C \int_s^\tau\int_{\mathbb{B}_1} |y|^a U^2 |\nabla \varphi|^2 \dX\rd t,
\end{equation}
for some $C > 0$ (depending only on $N$ and $a$). The bound \eqref{eq:L2LinfFundEst1} immediately follows from \eqref{eq:EnEs1HeatEq}, choosing $\varphi$ as in \eqref{eq:L2LinfPropVp} and neglecting the nonpositive term in the l.h.s. The $L^{2,a}$ bound for $\nabla U$ is obtained  similar to \eqref{eq:L2LinfFundEst2}.

In our setting testing with $\psi^2 = \varphi^2(X)\chi_{[s,\tau]}(t)$ is not admissible since the weak derivative of the function $t \to \chi_{[s,\tau]}(t)$ is not $L^2(-1,1)$. However, we notice that our proof is somehow more elementary: the time factor in the test function $\psi = \varphi(X)\phi(t)$ we use to prove the bound for $\int_{\mathbb{B}_1} |y|^a U^2(X,\tau) \dX$ is obtained by solving an easy first order ODE. In this way, we bypass the approximation procedure of $\chi_{[s,\tau]}$ needed in the purely parabolic framework.
\end{rem}
\begin{lem}\label{lem:NoSpikesEstimate} (No-spikes estimate)
Let $N \geq 1$, $a \in (-1,1)$ and $(p,q)$ satisfying \eqref{eq:AssumptionsExponents}. Then there exists a constant $\delta > 0$ depending only on $N$, $a$ and $q$ such that for every family $\{U_\varepsilon\}_{\varepsilon \in (0,1)}$ of subsolutions in $\QQ_1$ such that
\begin{equation}\label{eq:NoSpikesL2Bound}
\|F_\vep\|_{L^{p,a}(\QQ_1)} + \|f_\vep\|_{L_\infty^q(Q_1)} \leq 1 \qquad \text{ and } \qquad \int_{\QQ_1} |y|^a (U_\vep)_+^2 \dX \rd t \leq \dd, 
\end{equation}
for every $\vep \in (0,1)$, then
\begin{equation}\label{eq:NoSpikesL2Linf}
U_\vep \leq 1 \quad \text{ in } \QQ_{1/2}, 
\end{equation}
for every $\vep \in (0,1)$.
\end{lem}
\begin{proof} Let us set $U = U_\vep$, $F_\vep = F$, $f_\vep = f$ and assume either $N \geq 2$ and $a \in (-1,1)$, or $N = 1$ and $a \in [0,1)$. For every integer $j \geq 0$, define 
\[
\begin{aligned}
\tilde{\BB}_j := \BB_{r_j}, \qquad \tilde{\QQ}_j := \QQ_{r_j}, \qquad  r_j := \tfrac12 + 2^{-j-1},
\end{aligned}
\]
the nonnegative subsolutions (with $F_+$ and $f_+$, see Lemma \ref{lem:IneqTruncations})
\[
\begin{aligned}
V_j := ( U - C_j )_+, \qquad C_j := 1 - 2^{-j},
\end{aligned}
\]
and the quantity
\[
E_j := \int_{\tilde{\QQ}_j} |y|^a V_j^2 \dX\rd t.
\]
Applying the energy inequality \eqref{eq:EnergyEstimate} to $V_{j+1}$ with $\varrho = r_{j+1}$ and $r = r_j$, we have
\[
\begin{aligned}
\tfrac{1}{r_{j+1}^2} \int_{\tilde{\mathbb{Q}}_{j+1}} |y|^a V_{j+1}^2 \dX\rd t &+ \esssup_{t \in (-r_{j+1}^2,r_{j+1}^2)} \int_{\tilde{\mathbb{B}}_{j+1}} |y|^a V_{j+1}^2(X,t) \dX + \int_{\tilde{\mathbb{Q}}_{j+1}} |y|^a|\nabla V_{j+1}|^2 \dX\rd t \\
& \leq C \Big[ (r_j-r_{j+1})^{-2} \int_{\tilde{\mathbb{Q}}_j} |y|^a V_{j+1}^2 \dX\rd t + \|V_{j+1}\|_{L^{p',a}(\tilde{\QQ}_j)} \Big],
\end{aligned}
\]
for some $C > 0$ depending only on $N$, $a$ and $q$ and thus, by Sobolev inequality (cf. Theorem \ref{thm:SobEmbedLocalPar}, formula \eqref{eq:ParSobIneq1}) and the definition of $V_j$, we deduce
\begin{equation}\label{eq:NoSpikesSobolev}
\begin{aligned}
\Big( \int_{\tilde{\mathbb{Q}}_{j+1}} |y|^a V_{j+1}^{2\gamma} \rd X\rd t \Big)^{\frac{1}{\gamma}} &\leq C 2^{2j} \int_{\tilde{\mathbb{Q}}_j} |y|^a V_{j+1}^2 \rd X\rd t + C\|V_{j+1}\|_{L^{p',a}(\tilde{\QQ}_j)} \\
&\leq C 2^{2j} \int_{\tilde{\mathbb{Q}}_j} |y|^a V_j^2 \dX\rd t + C\|V_{j+1}\|_{L^{p',a}(\tilde{\QQ}_j)}, 
\end{aligned}
\end{equation}
for some new $C > 0$ and $\gamma > 1$ (depending only on $N$, $a$ and $q$). On the other hand, by H\"older's inequality
\begin{equation}\label{eq:NoSpikesHolder}
E_{j+1} = \int_{\tilde{\QQ}_{j+1}} |y|^a V_{j+1}^2 \dX\rd t \leq \Big( \int_{\tilde{\mathbb{Q}}_{j+1}} |y|^a V_{j+1}^{2\gamma} \rd X\rd t \Big)^{\frac{1}{\gamma}} \cdot |\{ V_{j+1} > 0 \} \cap \tilde{\mathbb{Q}}_{j+1} |_a^{\frac{1}{\gamma'}},
\end{equation}
where $\gamma'$ is the conjugate exponent of $\gamma$, and $|A|_a := \int_A |y|^a \dX\rd t$ for every measurable set $A \subset \RR^{N+2}$. Further, using the definition of $V_j$, it follows
\begin{equation}\label{eq:NoSpikesBoundLevelSets}
\begin{aligned}
|\{ V_{j+1} > 0 \} \cap \tilde{\mathbb{Q}}_{j+1} |_a &= |\{ V_j > 2^{-j-1} \} \cap \tilde{\mathbb{Q}}_{j+1} |_a = |\{ V_j^2 > 2^{-2j-2} \} \cap \tilde{\mathbb{Q}}_{j+1} |_a \\
& \leq 2^{2j + 2} \int_{\tilde{\QQ}_{j+1}} |y|^a V_j^2 \dX\rd t \leq 2^{2j + 2} \int_{\tilde{\QQ}_j} |y|^a V_j^2 \dX\rd t = 2^{2j + 2} E_j,
\end{aligned}
\end{equation}
and, by H\"older's inequality again,
\begin{equation}\label{eq:NoSpikesLpBound}
\|V_{j+1}\|_{L^{p',a}(\tilde{\QQ}_j)} \leq \Big( \int_{\tilde{\QQ}_j} |y|^a V_{j+1}^2 \dX\rd t \Big)^{\frac{1}{2}} |\{ V_{j+1} > 0 \} \cap \tilde{\mathbb{Q}}_{j+1} |_a^{\frac{p-2}{2p}} \leq E_j^{\frac{1}{2}} |\{ V_{j+1} > 0 \} \cap \tilde{\mathbb{Q}}_{j+1} |_a^{\frac{p-2}{2p}}.
\end{equation}
So, combining \eqref{eq:NoSpikesSobolev}, \eqref{eq:NoSpikesHolder}, \eqref{eq:NoSpikesBoundLevelSets} and \eqref{eq:NoSpikesLpBound}, and using \eqref{eq:NoSpikesL2Bound}, we obtain
\[
\begin{cases}
E_{j+1} \leq C^{1+j} \big( E_j^{1 + 1/\gamma'} +  E_j^{1 + \overline{\gamma}} \big) \\
E_0 \leq \dd,
\end{cases}
\]
for every $j \geq 1$ and for some new $C > 0$ depending only on $N$, $a$ and $p$, where $\overline{\gamma} := \tfrac{1}{\gamma'} - \tfrac{1}{p} > 0$ (in view of \eqref{eq:AssumptionsExponents} and the definition of $\gamma$, cf. Theorem \ref{thm:SobEmbedLocalPar}).

To complete the argument, it is sufficient to notice that since $\overline{\gamma} < \tfrac{1}{\gamma'}$, we have $E_{j+1} \leq C^{1+j} E_j^{1 + \overline{\gamma}}$ and
\[
E_j \leq C^{(1 + \overline{\gamma})^j  \big(\sum_{i=1}^j \frac{i}{(1+ \overline{\gamma})^i} \big)} \delta^{(1+\overline{\gamma})^j} \leq C^{(1 + \overline{\gamma})^j  \big(\sum_{i=1}^\infty \frac{i}{(1+ \overline{\gamma})^i} \big)} \delta^{(1+\overline{\gamma})^j} \leq (C\delta)^{(1+\overline{\gamma})^j},
\]
for some new $C > 0$ (depending only on $N$ and $a$), whenever $E_j \leq 1$. Thus, choosing $\dd$ such that $C\dd < 1$, we deduce $E_j \leq 1$ for every $j$ and $E_j \to 0$ as $j \to +\infty$. Finally, since $C_j \to 1$ and $r_j \to \tfrac12$ as $j \to + \infty$, we obtain by definition of $V_j$
\[
E_j \to \int_{\QQ_{1/2}} (U-1)_+^2 \dX \rd t = 0,
\]
which implies \eqref{eq:NoSpikesL2Linf} and our statement follows.

The very same argument works when $N=1$ and $a \in (-1,0)$, by taking $\gamma = 2$ and using inequality \eqref{eq:ParSobIneq2} instead of \eqref{eq:ParSobIneq1}. Notice that in this range of parameters we have $p > \bar{p} = 2$: this implies $p' \in (1,2)$ and thus the chain of inequalities in \eqref{eq:NoSpikesLpBound} does not change.
\end{proof}
\begin{proof}[Proof of Proposition \ref{prop:L2LinfEstimate}] Set $U = U_\vep$ and define
\[
V := \lambda_+ U_+, \qquad \lambda_+ := \frac{\sqrt{\dd}}{\|U_+\|_{L^{2,a}(\QQ_1)} + \|F_+\|_{L^{p,a}(\QQ_1)} + \|f_+\|_{L_\infty^q(Q_1)}},
\]
where $\dd > 0$ is as in Lemma \ref{lem:NoSpikesEstimate}. Since $V$ is a nonnegative subsolution (with $\lambda_+F_+$ and $\lambda_+f_+$) and satisfies \eqref{eq:NoSpikesL2Bound}, then \eqref{eq:NoSpikesL2Linf} gives
\[
\|U_+\|_{L^\infty(\QQ_{1/2})} \leq \tfrac{1}{\sqrt{\dd}} \big(  \|U_+\|_{L^{2,a}(\QQ_1)} + \|F_+\|_{L^{p,a}(\QQ_1)} + \|f_+\|_{L_\infty^q(Q_1)} \big).
\]
Repeating the same argument with 
\[
V := \lambda_- U_-, \qquad \lambda_- := \frac{\sqrt{\dd}}{\|U_-\|_{L^{2,a}(\QQ_1)} + \|F_-\|_{L^{p,a}(\QQ_1)} + \|f_-\|_{L_\infty^q(Q_1)}},
\]
we obtain
\[
\|U_-\|_{L^\infty(\QQ_{1/2})} \leq \tfrac{1}{\sqrt{\dd}} \big(  \|U_-\|_{L^{2,a}(\QQ_1)} + \|F_-\|_{L^{p,a}(\QQ_1)} + \|f_-\|_{L_\infty^q(Q_1)} \big),
\]
and our thesis follows choosing $C = \tfrac{2}{\sqrt{\dd}}$, recalling that $\dd$ depends only on $N$, $a$ and $q$.
\end{proof}
\begin{rem}\label{rem:Translation1} Thanks to \cite[Remark 2.1, Lemma 2.1]{ChiarenzaSerapioni1985:art}, our proofs can be easily adapted to treat the case of weak solutions/sub-solutions in $\QQ_1(X_0,t_0)$, with constants independent of $(X_0,t_0) \in\RR^{N+2}$.
\end{rem}
%
%
%
%
%

%
%
%
%
%

%
\section{Uniform H\"older estimates}\label{Sec:OscDecay}
This section is devoted to the proof of a uniform H\"older estimate for families of solutions to \eqref{eq:PerProbEpsFf} (in the sense of Definition \ref{def:WeakSolFf}). As mentioned in the introduction, we will consider sequences of weak solutions satisfying the following compactness assumption 
\begin{equation}\label{eq:ConvergenceOscDecay}
U_{\vep_j} \to U \quad \text{ in } C([-r^2,r^2]:L^{2,a}(\BB_r)) \quad \text{as } j \to +\infty,
\end{equation}
for some $r > 0$ and some $U \in C([-r^2,r^2]:L^{2,a}(\BB_r))$. As pointed out in Remark \ref{rem:CompactnessPar}, in the classical parabolic framework this property can be deduced working directly with the equation and exploiting the parabolic energy estimate. Even though we do not have any counter-example, this approach seems not work in our setting. Anyway, in light of Proposition \ref{prop:weaklimit}, \eqref{eq:ConvergenceOscDecay} is always guaranteed when working with sequences $\{U_{\vep_j}\}_{j\in\NN}$ of minimizers of the functional \eqref{eq:Functional}.

The following proposition is the main result of this section.
\begin{prop}\label{thm:CalphaBound}
Let $N \geq 1$, $a \in (-1,1)$ and $(p,q)$ satisfying \eqref{eq:AssumptionsExponents}. There exist $\alpha \in (0,1)$ and $C > 0$ depending only on $N$, $a$, $p$ and $q$ such that for every $U$, every sequence $\vep_j \to 0$ and every sequence $\{U_{\vep_j}\}_{j \in \NN}$ of weak solutions in $\QQ_8$ satisfying \eqref{eq:ConvergenceOscDecay} with $r=4$ and
\begin{equation}\label{eq:FinalBoundFjfj}
\|F_{\vep_j}\|_{L^{p,a}(\QQ_4)} + \|f_{\vep_j}\|_{L_\infty^q(Q_4)} \leq C_0
\end{equation}
for every $j \in \NN$ and some $C_0 > 0$, then there exist a subsequence $j_k \to +\infty$ such that 
\begin{equation}\label{eq:HolderBoundsEstFf}
\| U_{\vep_{j_k}} \|_{C^{\alpha,\alpha/2}(\QQ_1)} \leq C \big( \| U_{\vep_{j_k}} \|_{L^\infty(\QQ_4)} + \|F_{\vep_{j_k}}\|_{L^{p,a}(\QQ_4)} + \|f_{\vep_{j_k}}\|_{L_\infty^q(Q_4)} \big),
\end{equation}
for every $k \in \NN$.
\end{prop}
The estimate \eqref{eq:HolderBoundsEstFf} will follow as a consequence of an oscillation decay type result which, in turn, is obtained by combining the $L^{2,a} \to L^\infty$ bound for nonnegative subsolutions (see Proposition \ref{prop:L2LinfEstimate}) and a parabolic version of the ``De Giorgi isoperimetric lemma'' (see Lemma \ref{lem:IsoperimetricIneq}). Lemma \ref{lem:IsoperimetricIneq} is the key step: our approach is based on the ideas of \cite[Section 3]{Vasseur2016:art} and relies on the validity of its ``elliptic'' counterpart, that we state in Lemma \ref{lem:IsoperimetricIneqElliptic} (the proof is a modification of \cite[Lemma 10]{Vasseur2016:art}).
\begin{lem}\label{lem:IsoperimetricIneqElliptic} (``De Giorgi isoperimetric lemma'')
Let $N \geq 1$, $a \in (-1,1)$. Let $U \in H^{1,a}(\BB_1)$ satisfying
\[
\int_{\BB_1} |y|^a |\nabla U|^2 \dX \leq C_0,
\]
for some $C_0 > 0$ and
\[
A := \{U \geq 1/2 \} \cap \BB_1, \qquad C := \{U \leq 0 \} \cap \BB_1, \qquad
D := \{0 < U < 1/2 \} \cap \BB_1.
\]
Then for every $p \in (1,2)$, there exists a constant $c > 0$ depending only on $N$, $a$, $C_0$ and $p$ such that
\[
|A|_a \cdot |C|_a \leq c |D|_a^{\frac{2-p}{2p}}.
\]
\end{lem}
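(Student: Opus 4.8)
The plan is to follow the classical De Giorgi argument, adapted to the Muckenhoupt weight $|y|^a$. The quantity we want to control is $|A|_a\cdot|C|_a$, which measures how far the function $U$ is simultaneously from being mostly $\geq 1/2$ and mostly $\leq 0$ on $\BB_1$; the intuition is that if both "plateaus" $A$ and $C$ have substantial weighted measure, then $U$ must transit between levels $0$ and $1/2$ over a set $D$ of substantial weighted measure, and the energy bound forces $D$ to be large. First I would recall the weighted Poincar\'e--type inequality on $\BB_1$ available for $A_2$ weights (this is standard and implicitly used throughout the paper via \cite{FabesKS1982:art}): for $U\in H^{1,a}(\BB_1)$ one has $\|U-\langle U\rangle_a\|_{L^{2,a}(\BB_1)}\le C\|\nabla U\|_{L^{2,a}(\BB_1)}$, and more generally a weighted Sobolev--Poincar\'e inequality with exponent $2^\ast_a>2$. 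One should not work with the weighted average $\langle U\rangle_a$ directly, though; the cleaner route is the pointwise potential estimate.

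The key step is the weighted version of the estimate that, for $X\in A$ and $Z\in C$, the difference $U(X)-U(Z)\ge 1/2$, so integrating the fundamental theorem of calculus along the segment joining $X$ to $Z$ (or, better, using the Riesz potential representation $|U(X)-U(Z)|\lesssim \int_{\BB_1}\frac{|\nabla U(W)|}{|X-W|^{N}}\,dW$ valid on balls) gives
\[
\tfrac12\,|A|_a\,|C|_a \;\le\; \int_A\int_C |U(X)-U(Z)|\,|y|^a|\tilde y|^a\,dX\,dZ \;\le\; c\int_{\BB_1}\int_{\BB_1}\frac{|\nabla U(W)|\,|y_W|^a}{|X-W|^{N}}\,|y|^a\,dW\,dX,
\]
where one integrates first over $Z\in C\subset\BB_1$ and then over $X$. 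Here is where one must be careful with the weight: one wants to keep one factor $|y|^a$ attached to $\nabla U$ so as to use the energy bound, and absorb the remaining factors into a bounded weighted-kernel integral. Using that $|y|^a\in A_2$, the map $g\mapsto \int_{\BB_1}\frac{g(W)}{|X-W|^N}\,dW$ is bounded from $L^{2,a}$ to $L^{2,a}$ (weighted fractional integration / weighted Hardy--Littlewood--Sobolev, again from the $A_2$ theory), so by Cauchy--Schwarz in $W$ and the energy hypothesis the right-hand side is controlled by $c\,C_0^{1/2}\,|D|_a^{1/2}\cdot(\text{something})$ — but a naive Cauchy--Schwarz loses too much; instead one restricts the $\nabla U$ integral to $D$, since $\nabla U = 0$ a.e. on $\{U\ge1/2\}$ and on $\{U\le0\}$ up to the level-set boundaries (more precisely, working with the truncation $\min\{(U)_+,1/2\}$ whose gradient is supported in $\overline D$), and then Cauchy--Schwarz over $D$ produces the factor $|D|_a^{1/2}$...

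Wait — the exponent in the statement is $\frac{2-p}{2p}$ with $p\in(1,2)$, not $1/2$. So the refinement is: after truncating, write $\nabla U\cdot\chi_D$ and apply H\"older with exponents $p$ and $p'$ rather than Cauchy--Schwarz, using the energy bound in the form $\|\nabla U\|_{L^{p,a}(D)}\le \|\nabla U\|_{L^{2,a}(\BB_1)}|D|_a^{\frac{2-p}{2p}}\le C_0^{1/2}|D|_a^{\frac{2-p}{2p}}$, and bounding the remaining potential integral using the $L^{p,a}\to L^{p',a}$ (or $L^{p,a}\to L^{\infty}$-type, since $p$ can be taken close enough to $2$ that $\frac{1}{|X-W|^N}\in L^{p',a}_{loc}$ in $W$ uniformly in $X$) boundedness of the Riesz-type operator with $A_2$ weight. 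This is exactly the mechanism of \cite[Lemma 10]{Vasseur2016:art}, and the only genuinely new ingredient is replacing the unweighted Riesz potential / Sobolev estimates by their $A_2$-weighted counterparts, which are classical (see \cite{ChiarenzaSerapioni1985:art,FabesKS1982:art}).

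The main obstacle, then, is bookkeeping the weight factors correctly: one has three factors of $|y|^a$ floating around (from $dX$, from $dZ$, and the one we want on $\nabla U$) and the potential kernel $|X-W|^{-N}$ is \emph{unweighted}, so one needs the right weighted fractional-integration theorem — namely that convolution with $|X|^{-N}$ on a bounded domain maps $L^{p,a}\to L^{q,a}$ for suitable $q$ (including $q=p'$ when $p$ is close to $2$), which holds because $|y|^a$ is $A_2$ hence $A_\infty$ and satisfies the requisite doubling and reverse-H\"older conditions. I would also need to verify that the truncation $T(U):=\min\{U_+,1/2\}$ lies in $H^{1,a}(\BB_1)$ with $\|\nabla T(U)\|_{L^{2,a}}\le C_0^{1/2}$ and $\nabla T(U)$ supported in $\overline D$ — this is routine from the chain rule for Sobolev functions. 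Everything else is a careful but standard application of H\"older's inequality and the weighted potential estimate, yielding
\[
|A|_a\,|C|_a \;\le\; c\,|D|_a^{\frac{2-p}{2p}}
\]
with $c$ depending only on $N$, $a$, $C_0$ and $p$, as claimed.
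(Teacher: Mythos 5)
Your overall skeleton (truncate so that the gradient lives on $D$; relate $|A|_a\,|C|_a$ to an integral of differences of the truncation; then H\"older with exponent $p<2$ on $D$, which correctly gives $\|\nabla V\|_{L^{p,a}(D)}\le C_0^{1/2}|D|_a^{\frac{2-p}{2p}}$) matches the paper, but the step you single out as the "key step" is where the argument breaks. In the displayed inequality you attach the factor $|y_W|^a$ to $\nabla U$ inside the Riesz potential representation. The representation $|U(X)-U_{\BB_1}|\lesssim\int_{\BB_1}|\nabla U(W)|\,|X-W|^{-N}\,\rd W$ is an unweighted statement and carries no weight on $\nabla U$; inserting $|y_W|^a$ is harmless only when $a\le 0$ (then $|y_W|^a\ge 1$ on $\BB_1$), and for $a\in(0,1)$ the resulting estimate is \emph{false}, not merely unjustified. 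Indeed, since $a>-1$ one has $\sup_{W\in\BB_1}\int_{\BB_1}|y|^a|X-W|^{-N}\dX\le C(N,a)$, so by Fubini your displayed bound would yield $|A|_a\,|C|_a\le c(N,a)\int_{D}|y|^a|\nabla U|\,\rd W$, with a constant not involving $C_0$. Test this with $U(x,y)=\tfrac12\min\{\max\{y/\delta,0\},1\}$: then $A\supset\BB_1\cap\{y\ge\delta\}$ and $C=\BB_1\cap\{y\le0\}$ have $|A|_a,|C|_a$ bounded below independently of $\delta$, while $\int_D|y|^a|\nabla U|\simeq\delta^{a}\to0$ as $\delta\to0$ when $a>0$ (the energy blows up, but that is irrelevant since $C_0$ does not enter this step). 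Equivalently: $|y|^a\notin A_1$ for $a>0$, so no weighted $(1,1)$-Poincar\'e inequality backs a kernel of the form $|y_W|^a|X-W|^{-N}$; the correct weighted representation has kernel $|X-W|\big/\mu_a\big(\BB_{|X-W|}(X)\big)$, whose homogeneity near $\{y=0\}$ is different. A secondary slip: the fallback "take $p$ close to $2$ so that $|X-W|^{-N}\in L^{p'}_{loc}$" cannot work, since in $\RR^{N+1}$ this needs $Np'<N+1$, i.e.\ $p>N+1\ge2$.

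The paper's proof keeps exactly the ingredients of yours that are sound (the truncation $V$ with $\nabla V$ supported in $D$, and the final H\"older step on $D$), but the middle is done through the weighted average you chose to avoid: $|A|_a|C|_a=2\int_C|\zeta|^a\dZ\int_A|y|^aV\dX\le 4|\BB_1|_a\int_{\BB_1}|y|^a|V-\overline V_a|\dX$, then H\"older and the weighted $L^{p}$ Poincar\'e inequality of Fabes--Kenig--Serapioni \cite[Theorem 1.5]{FabesKS1982:art}, $\int_{\BB_1}|y|^a|V-\overline V_a|^p\dX\le c\int_{D}|y|^a|\nabla V|^p\dX$. That citation is precisely the weighted input missing from your write-up; if you insist on a potential-theoretic derivation you must use the weighted representation formula with the ball-measure-normalized kernel, which amounts to re-proving the FKS inequality rather than bypassing it. (Incidentally, the same profile above shows that such a weighted $L^p$ Poincar\'e inequality requires $|y|^a\in A_p$, i.e.\ $p>1+a$; so in either approach the usable range is really $p\in(1+a,2)$, which is all that is needed in the subsequent application of the lemma.)
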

\begin{proof} We consider a new function $V$ defined as $V = U$ in $D$, $V = \tfrac12$ in $A$ and $V = 0$ in $C$. Clearly, it satisfies $\nabla V = 0$ in $\BB_1\setminus D$ and $\int_{\BB_1} |y|^a |\nabla V|^2 \dX \leq C_0$.

Now, setting $\overline{V}_a := |\BB_1|_a^{-1} \int_{\BB_1} |y|^a V(X) \dX$ and writing $X = (x,y)$ and $Z = (z,\zeta)$, we have
\[
\begin{aligned}
|A|_a \cdot |C|_a &= 2 \int_C |\zeta|^a \dZ \cdot \int_A |y|^a V(X) \dX \leq 2 \int_C \int_A |y|^a |\zeta|^a |V(X) - V(Z)| \dX \dZ \\
& \leq 2 \iint_{\BB_1^2} |y|^a |\zeta|^a |V(X) - \overline{V}_a| \dX \dZ + 2 \iint_{\BB_1^2} |y|^a |\zeta|^a |V(Z) - \overline{V}_a| \dX \dZ \\
& \leq 4 |\BB_1|_a \int_{\BB_1} |y|^a |V(X) - \overline{V}_a| \dX \leq  4 |\BB_1|_a^{1 + 1/p'} \Big( \int_{\BB_1} |y|^a |V(X) - \overline{V}_a|^p \dX \Big)^{1/p},
\end{aligned}
\]
where, in the last inequality, we have used H\"older's inequality. By \cite[Theorem 1.5]{FabesKS1982:art} and H\"older's inequality again, we have
\[
\begin{aligned}
\int_{\BB_1} |y|^a |V(X) - \overline{V}_a|^p \dX &\leq c \int_{\BB_1} |y|^a |\nabla V|^p \dX = c \int_D |y|^a |\nabla V|^p \dX \\
&\leq c \Big(\int_{\BB_1} |y|^a |\nabla V|^2 \dX \Big)^{p/2} |D|_a^{\frac{2-p}{2}} \leq c \, C_0^{p/2} |D|_a^{\frac{2-p}{2}},
\end{aligned}
\]
for some $c > 0$ depending on $p$. Combining the two inequalities, our statement follows.
\end{proof}
\begin{lem}\label{lem:IsoperimetricIneq}
Let $N \geq 1$, $a \in (-1,1)$, $(p,q)$ satisfying \eqref{eq:AssumptionsExponents}, and $\tilde{\QQ} := \BB_1\times(-2,-1)$.

Assume there exist $\dd > 0$, a sequence $\vep_j \to 0$ and a family $\{U_{\vep_j}\}_{j \in \NN}$ of weak solutions in $\QQ_2$ such that
\[
U_{\vep_j} \leq 1, \qquad |\{U_{\vep_j} \geq 1/2 \} \cap \QQ_1|_a \geq \dd, \qquad |\{U_{\vep_j} \leq 0 \} \cap \tilde{\QQ}|_a \geq \frac{|\tilde{\QQ}|_a}{2},
\]
and
\[
\|F_{\vep_j}\|_{L^{p,a}(\QQ_2)} + \|f_{\vep_j}\|_{L_\infty^q(Q_2)} \leq 1,
\]
for every $j \in \NN$. Then there exists $\sigma > 0$ depending only on $N$, $a$, $p$, $q$ and $\dd$ such that for every $U$ and every $\vep_{j_k} \to 0$ satisfying \eqref{eq:ConvergenceOscDecay} (with $j = j_k$ and $r=2$), we have
\[
|\{0 < U_{\vep_{j_k}} < 1/2 \} \cap (\QQ_1 \cup \tilde{\QQ})|_a \geq \sigma,
\]
for every $k \in \NN$.
\end{lem}
\begin{proof} We assume by contradiction there exist sequences $\vep_k := \vep_{j_k} \to 0$, $F_k:= F_{\vep_{j_k}}$, $f_k:=f_{\vep_{j_k}}$ satisfying 
\begin{equation}\label{eq:UnifBoundsFjfj}
\|F_k\|_{L^{p,a}(\QQ_2)} + \|f_k \|_{L_\infty^q(Q_2)} \leq 1,
\end{equation}
$U \in C([-4,4]:L^{2,a}(\BB_2))$ and a sequence of weak solutions $U_k := U_{\vep_{j_k}}$ satisfying $U_k \leq 1$ and $U_k \to U$ in $C([-4,4]:L^{2,a}(\BB_2))$, such that if $A_k := \{U_k \geq 1/2 \} \cap \QQ_1$, $C_k := \{U_k \leq 0 \} \cap \tilde{\QQ}$ and $D_k := \{0 < U_k < 1/2 \} \cap (\QQ_1 \cup \tilde{\QQ})$, then
\begin{equation}\label{eq:IsopAbsurdCond}
\begin{aligned}
|A_k|_a \geq \dd, \qquad |C_k|_a \geq |\tilde{\QQ}|_a/2, \qquad |D_k|_a \leq \tfrac{1}{k},
\end{aligned}
\end{equation}
for every $k \in \NN$.

Let us set $V_k := (U_k)_+$ and $V := U_+$. By assumption, we have $V_k \to V$ in $C([-4,4]:L^{2,a}(\BB_2))$. Further, by Lemma \ref{lem:IneqTruncations}, each $V_k$ is a nonnegative weak subsolution and thus, combining the assumption $U_k \leq 1$ and \eqref{eq:EnergyEstimate}, we may assume that $V_k \rightharpoonup V$ in $L^2(-2,2:H^{1,a}(\BB_1))$ which, in turn, implies 
\[
\|\nabla V\|_{L^{2,a}((-2,2)\times\BB_1))} \leq C,
\]
for some $C > 0$ depending only on $N$, $a$, $p$ and $q$. Notice that by $C([-4,4]:L^{2,a}(\BB_2))$ convergence, we also have convergence in measure, that is, for every $\ep > 0$,
\begin{equation}\label{eq:IsopConvMeasure}
\lim_{k \to +\infty} |\{ |V_k - V| \geq \ep \} \cap \QQ_2|_a = 0.
\end{equation}

\emph{Step 1.} We prove that
\begin{equation}\label{eq:IsopFirstMeasureLimit}
|\{0 < V < \tfrac12\} \cap (\QQ_1 \cup \tilde{\QQ})|_a = 0.
\end{equation}
To see this, we notice that given $\ep > 0$, the set $\{\ep \leq V \leq \tfrac12 -\ep\}$ can be trivially written as the disjoint union of $\{|V_k - V| \geq \ep\}$ and $\{|V_k - V| < \ep\}$. Furthermore,
\[
\{\ep \leq V \leq \tfrac12 -\ep\} \cap \{|V_k - V| < \ep\} \subseteq \{\ep \leq V \leq \tfrac12 -\ep\} \cap \{0 < V_k <  \tfrac12\},
\]
which implies
\[
|\{\ep \leq V \leq \tfrac12 -\ep\} \cap (\QQ_1 \cup \tilde{\QQ})|_a \leq |\{|V_k - V| \geq \ep\} \cap (\QQ_1 \cup \tilde{\QQ})|_a + |\{ 0 < V_k < \tfrac12 \} \cap (\QQ_1 \cup \tilde{\QQ})|_a.
\]
Since $\QQ_1 \cup \tilde{\QQ} \subset \QQ_2$, we may pass to the limit as $k \to +\infty$ and notice that the last relation in \eqref{eq:IsopAbsurdCond} and \eqref{eq:IsopConvMeasure} yield $|\{\ep \leq V \leq \tfrac12 -\ep\} \cap (\QQ_1 \cup \tilde{\QQ})|_a = 0$ and thus, by the arbitrariness of $\ep > 0$, \eqref{eq:IsopFirstMeasureLimit} follows.

\smallskip

\emph{Step 2.} Now we show that
\begin{equation}\label{eq:Isop2fundIneqLimit}
|\{ V = 0 \} \cap \tilde{\QQ}|_a \geq \frac{|\tilde{\QQ}|_a}{2}.
\end{equation}
First, we notice that \eqref{eq:IsopFirstMeasureLimit}, combined with the fact that $\nabla V(\cdot,t) \in L^{2,a}(\BB_1)$ for a.e. $t \in (-2,1)$, allows us to apply Lemma \ref{lem:IsoperimetricIneqElliptic}, obtaining
\begin{equation}\label{eq:IsopSignUaet}
\begin{aligned}
\text{either } \; V(\cdot,t) = 0 \quad &\text{ in } \BB_1 \\
\text{or } \; V(\cdot,t) \geq \tfrac12 \quad &\text{ in } \BB_1,
\end{aligned}
\end{equation}
for a.e. $t \in (-2,1)$. Now, following the proof of \eqref{eq:IsopFirstMeasureLimit}, we notice that if $V_k \leq 0$ and $\ep > 0$ is fixed, then either $|V_k - V| \geq \ep$ or $V \leq \ep$. Therefore
\[
\frac{|\tilde{\QQ}|_a}{2} \leq |\{V_k = 0 \} \cap \tilde{\QQ}|_a \leq |\{|V_k - V| \geq \ep\} \cap \tilde{\QQ}|_a + |\{  V < \ep \} \cap \tilde{\QQ}|_a,
\]
and thus, taking the limit as $k \to + \infty$ and then as $\ep \to 0$, we deduce \eqref{eq:Isop2fundIneqLimit}, where we have exploited \eqref{eq:IsopAbsurdCond} and \eqref{eq:IsopConvMeasure} again.

\smallskip

\emph{Step 3.} This is the most delicate part of the proof. We prove that
\begin{equation}\label{eq:IsopFundIneqLimit}
V = 0 \quad \text{ in } \QQ_1.
\end{equation}
We test the differential inequality of $V_k$ with $\eta_k = V_k \psi_k^2$, for a sequence of test functions $\psi_k$ we will choose in a moment. Following the proof of Lemma \ref{lem:EnergyEstimate}, we obtain (cf. \eqref{eq:L2LinfFirstDiffInequality})
\[
\begin{aligned}
\tfrac{1}{2} &\int_{\mathbb{Q}_2} |y|^a \partial_t(V_k^2) \psi_k^2 \dX\rd t +  \int_{\mathbb{Q}_2} |y|^a |\nabla V_k|^2 \psi_k^2 \dX\rd t + \varepsilon_k \int_{\mathbb{Q}_2} |y|^a |\partial_t V_k|^2 \psi_k^2 \dX\rd t \\
&\leq - 2\int_{\mathbb{Q}_2} |y|^a ( \psi_k \nabla V_k ) \cdot ( V_k \nabla \psi_k) \dX\rd t - \vep_k \int_{\mathbb{Q}_2} |y|^a \partial_t V_k ( V_k \partial_t(\psi_k^2)) \dX\rd t \\
&\quad+ \int_{\mathbb{Q}_2} |y|^a (F_k)_+ V_k \psi_k^2\dX\rd t + \int_{Q_2} (f_k)_+ v_k \psi_k^2|_{y=0} \dX\rd t.
\end{aligned}
\]
Now, using Young's inequality, we reabsorb the gradient part of the first term in the r.h.s. with the second term in the l.h.s., and we apply H\"older's inequality to the last three terms in the r.h.s. to obtain
\begin{equation}\label{eq:EnIneq1k}
\begin{aligned}
\tfrac{1}{2} \int_{\mathbb{Q}_2} |y|^a \partial_t(V_k^2) \psi_k^2 \dX\rd t &\leq C_0 \int_{\mathbb{Q}_2} |y|^a V_k^2 |\nabla \psi_k|^2 \dX\rd t + \vep_k \|\partial_t V_k\|_{L^{2,a}(\mathbb{Q}_2)} \|V_k \partial_t(\psi_k^2)\|_{L^{2,a}(\mathbb{Q}_2)}  \\
&\quad+ \|F_k\|_{L^{p,a}(\QQ_2)} \| V_k \psi_k^2\|_{L^{p',a}(\QQ_2)} + \|f_k\|_{L_\infty^q(Q_2)} \| v_k \psi_k|_{y=0}^2\|_{L_1^{q'}(Q_2)} \\
&\leq C_0 \int_{\mathbb{Q}_2} |y|^a |\nabla \psi_k|^2 \dX\rd t + \vep_k \|\partial_t V_k\|_{L^{2,a}(\mathbb{Q}_2)} \|\partial_t(\psi_k^2)\|_{L^{2,a}(\mathbb{Q}_2)} \\
&\quad+ \|\psi_k^2\|_{L^{p',a}(\QQ_2)} + \|\psi_k|_{y=0}^2\|_{L_1^{q'}(Q_2)},
\end{aligned}
\end{equation}
for some numerical constant $C_0 > 0$ (the last inequality easily follows in light of \eqref{eq:UnifBoundsFjfj} and that $0 \leq V_k \leq 1$).

Now, we fix $-2 < s < \tau < 1$ and choose $\psi_k^2(X,t) = \varphi^2(X)\chi_k(t)$ where $\varphi \in C_0^\infty(\BB_1)$ satisfies
\begin{equation}\label{eq:IsopAssVarphi}
\varphi \geq 0, \qquad \int_{\BB_1} |y|^a \varphi^2(X) \dX = 1,
\end{equation}
while $\chi_k$ is a Lipschitz approximation of $\chi_{[s,\tau]}$ as $k \to +\infty$, defined as follows
\[
\chi_k(t):=
\begin{cases}
0 &\quad t \leq s - \dd_k \text{ or } t \geq \tau + \dd_k \\
\tfrac{1}{\dd_k}(t - s + \dd_k) &\quad t \in (s-\dd_k,s) \\
\tfrac{1}{\dd_k}(\tau + \dd_k - t) &\quad t \in (\tau,\tau + \dd_k) \\
1 &\quad t \in [s,\tau],
\end{cases}
\]
for some positive $\dd_k \to 0$ as $k \to +\infty$. Notice that, by dominated convergence, 
\begin{equation}\label{eq:DeGioGradpsi}
\int_{\mathbb{Q}_2} |y|^a |\nabla \psi_k|^2 \dX\rd t = \int_{-2}^2 \int_{\mathbb{B}_1} |y|^a |\nabla \varphi|^2 \chi_k(t) \dX\rd t \to \int_s^\tau \int_{\mathbb{B}_1} |y|^a |\nabla \varphi|^2 \dX\rd t \leq C(\tau -s), 
\end{equation}
as $k \to +\infty$, for some $C > 0$ depending only on $N$ and $a$. Similar
\begin{equation}\label{eq:DeGioLpLqpsi}
\begin{aligned}
\|\psi_k^2\|_{L^{p',a}(\QQ_2)} + \|\psi_k|_{y=0}^2\|_{L_1^{q'}(Q_2)} &\to \Big( \int_s^\tau \int_{\mathbb{B}_1} |y|^a \varphi^{2p'} \dX\rd t \Big)^{1/p'} +  \int_s^\tau  \Big( \int_{B_1} \varphi|_{y=0}^{2q'} \dx \Big)^{1/q'} \rd t  \\
&\leq C \big[ (\tau-s)^{1/p'} + (\tau-s) \big] \leq C(\tau-s)^{1/p'},
\end{aligned}
\end{equation}
as $k \to + \infty$, for some new $C > 0$ depending also on $p$ and $q$. Furthermore, by taking $C > 0$ larger and using \eqref{eq:EnergyEstimate}, we have
\begin{equation}\label{eq:DeGioVanishpart}
\begin{aligned}
\vep_k \|\partial_t V_k\|_{L^{2,a}(\mathbb{Q}_2)} \|\partial_t(\psi_k^2)\|_{L^{2,a}(\mathbb{Q}_2)} &\leq C \vep_k^{1/2} \|\partial_t(\psi_k^2)\|_{L^{2,a}(\mathbb{Q}_2)} = C \vep_k^{1/2} \Big( \int_{\mathbb{Q}_2} |y|^a \varphi^4(X) (\chi_k'(t))^2 \dX\rd t \Big)^{1/2} \\
& \leq C \vep_k^{1/2} \|\varphi^2\|_{L^{2,a}(\BB_2)} \Big( \tfrac{1}{\dd_k^2} \int_{s-\dd_k}^s \rd t + \tfrac{1}{\dd_k^2} \int_\tau^{\tau+\dd_k} \rd t \Big)^{1/2} \leq C \sqrt{\frac{\vep_k}{\dd_k}} \to 0,
\end{aligned}
\end{equation}
by choosing $\dd_k = \vep_k^{1/2}$. Finally, using the definition of $\chi_k$ and integrating by parts in time, we find
\begin{equation}\label{eq:DeGioDeriv}
\begin{aligned}
\tfrac{1}{2} \int_{\mathbb{Q}_2} |y|^a \partial_t(V_k^2) \psi_k^2 \dX\rd t &= \tfrac{1}{2} \int_{\mathbb{Q}_2} |y|^a \partial_t(V_k^2) \varphi^2(X) \chi_k(t) \dX\rd t \\
&= \tfrac{1}{2\dd_k} \int_\tau^{\tau+\dd_k}\int_{\mathbb{B}_1} |y|^a V_k^2 \varphi^2(X) \dX\rd t - \tfrac{1}{2\dd_k} \int_{s-\dd_k}^s\int_{\mathbb{B}_1} |y|^a V_k^2 \varphi^2(X) \dX\rd t \\
&\to \tfrac{1}{2} \int_{\mathbb{B}_1} |y|^a V^2(\tau) \varphi^2(X) \dX - \tfrac{1}{2} \int_{\mathbb{B}_1} |y|^a V^2(s) \varphi^2(X) \dX,
\end{aligned}
\end{equation}
as $k \to + \infty$. To check the limit, we use that $V_k$, $V$ and $\varphi$ are bounded in $\QQ_2$, and so
\[
\begin{aligned}
\tfrac{1}{\dd_k} \int_\tau^{\tau+\dd_k}\int_{\mathbb{B}_1} |y|^a |V_k^2 - V^2| \varphi^2(X) \dX\rd t &\leq \tfrac{C}{\dd_k} \int_\tau^{\tau+\dd_k}\int_{\mathbb{B}_1} |y|^a |V_k - V| \dX\rd t \\
&\leq \tfrac{C}{\dd_k} \int_\tau^{\tau+\dd_k} \|V_k(t) - V(t)\|_{L^{2,a}(\BB_1)} \rd t \\
&\leq C \sup_{t \in (-4,4)} \|V_k(t) - V(t)\|_{L^{2,a}(\BB_1)} \to 0,
\end{aligned}
\]
as $k \to +\infty$ and thus the limit in \eqref{eq:DeGioDeriv} follows by triangular inequality. Consequently, passing to the limit as $k \to +\infty$ in \eqref{eq:EnIneq1k} and using \eqref{eq:DeGioGradpsi}, \eqref{eq:DeGioLpLqpsi}, \eqref{eq:DeGioVanishpart} and \eqref{eq:DeGioDeriv}, we deduce
\[
\tfrac{1}{2} \int_{\mathbb{B}_1} |y|^a V^2(\tau) \varphi^2(X) \dX - \tfrac{1}{2} \int_{\mathbb{B}_1} |y|^a V^2(s) \varphi^2(X) \dX \leq C(\tau - s)^{1/p'},
\]
for some constant $C > 0$ depending only $N$, $a$, $p$, $q$ and $\varphi$.

Now, by \eqref{eq:Isop2fundIneqLimit}, we may choose $s \in (-2,-1)$ such that $V(\cdot,s) = 0$ in $\BB_1$, to obtain
\[
\int_{\BB_1} |y|^a V^2(X,\tau)\varphi^2(X)\dX \leq C (\tau - s)^{1/p'}.
\]
On the other hand, by \eqref{eq:IsopSignUaet}, we know that either $V(\cdot,\tau) = 0$ or $V(\cdot,\tau) \geq \tfrac12$ in $\BB_1$ for a.e. $\tau \in (-2,1)$. However, if $V(\cdot,\tau) \geq \tfrac12$ in $\BB_1$, the above inequality and \eqref{eq:IsopAssVarphi} yield
\[
(\tau - s)^{1/p'} \geq c_0,
\]
for some $c_0 > 0$ depending only $N$, $a$, $p$, $q$ and $\varphi$, and thus it must be $V(\cdot,\tau) = 0$ in $\BB_1$ for a.e. $\tau \leq s + c_0^{1/p'}$. Iterating this procedure, \eqref{eq:IsopFundIneqLimit} follows.

\smallskip

\emph{Step 4.} Finally, arguing as in \eqref{eq:IsopFirstMeasureLimit}, whenever $V_k \geq \tfrac12$, then either $|V-V_k| \geq \ep$ or $V > \tfrac12 - \ep$ and so
\[
\begin{aligned}
\dd \leq |\{V_k \geq 1/2 \} \cap \QQ_1|_a \leq |\{|V-V_k| \geq \ep \} \cap \QQ_1|_a + |\{V \geq 1/2 - \ep \} \cap \QQ_1|_a.
\end{aligned}
\]
Passing to the limit as $k \to + \infty$ and then as $\ep \to 0$, we deduce $|\{V \geq 1/2 \} \cap \QQ_1|_a \geq \dd$, which is in contradiction with \eqref{eq:IsopFundIneqLimit}.
\end{proof}
\begin{rem}\label{rem:CompactnessPar}
To understand the role played by the assumption \eqref{eq:ConvergenceOscDecay}, it is useful to compare with the classical parabolic framework. The main difference here is that the parabolic equation (combined with the parabolic energy estimate) gives enough compactness for carrying out the contradiction argument. Indeed, let $U_k$ be a sequence of weak solutions to \eqref{eq:ParabolicEqRem} satisfying $\|U_k\|_{L^{2,a}(\QQ_2)} \leq 1$. For every $k \in \NN$, we have 
\[
\int_{\mathbb{Q}_1} |y|^a \partial_t U_k \eta \dX\rd t = - \int_{\mathbb{Q}_1} |y|^a \nabla U_k \cdot\nabla\eta \dX\rd t,
\]
for every $\eta \in L^2(-1,1:H_0^{1,a}(\BB_1))$. Thus, noticing that the sequence $\{U_k\}_k$ is uniformly bounded in $L^2(-1,1:H^{1,a}(\BB_1))$ (by the parabolic energy estimate, see Remark \ref{rem:ParEE}) and using the equation of $U_k$ above, we obtain that $\{\partial_tU_k\}_k$ is uniformly bounded in $L^2(-1,1:H^{-1,a}(\BB_1))$ and so, by the Aubin-Lions lemma, we have $U_k \to U$ in $L^{2,a}(\QQ_1)$, up to passing to a suitable subsequence. This is enough to show that $U$ satisfies \eqref{eq:IsopFirstMeasureLimit} and \eqref{eq:Isop2fundIneqLimit}. To prove \eqref{eq:IsopFundIneqLimit}, it suffices to notice that, since $\{U_k(t)\}_k$ is uniformly bounded in $H^{1,a}(\BB_1)$ for a.e. $t \in (-1,1)$, we may also assume $U_k(t_n) \to U(t_n)$ in $L^{2,a}(\BB_1)$ for a finite increasing sequence of times $t_n \in (-1,1)$, up to passing to another subsequence. This allows us to pass to the limit into \eqref{eq:EnEs1HeatEq} (with $\tau = t_{n+1}$ and $s = t_n$) and complete the argument of Step 3.

On the contrary, in our ``approximating setting'', the weak formulation \eqref{eq:WeakEquation} (with $F_\vep = 0$ and $ f_\vep =0$) is
\[
\int_{\mathbb{Q}_1} |y|^a ( \varepsilon_k \partial_t U_{\vep_k} \partial_t \eta + \partial_t U_{\vep_k} \eta + \nabla U_{\vep_k} \cdot\nabla\eta ) \dX\rd t = 0,
\]
for every $\eta \in C_0^\infty(\QQ_1)$, and the energy estimate \eqref{eq:EnergyEstimate} gives uniform bounds for $\{U_{\vep_k}\}_{k\in\NN}$ in 
\[
L^2(-1,1:H^{1,a}(\BB_1)) \cap L^\infty(-1,1:L^{2,a}(\BB_1)),
\]
for sequences of solutions uniformly bounded in $L^{2,a}(\QQ_2)$. It is not clear to us if these ingredients can be combined to obtain strong $L^{2,a}(\QQ_1)$ compactness, or not. For this reason we require $C([-1,1]:L^{2,a}(\BB_1))$ strong compactness in \eqref{eq:ConvergenceOscDecay} which, as already mentioned, is guaranteed for families of minimizers of \eqref{eq:Functional} by Proposition \ref{prop:weaklimit}.
\end{rem}
\begin{lem}\label{lem:OscDecay}
Let $N \geq 1$, $a \in (-1,1)$, $(p,q)$ satisfying \eqref{eq:AssumptionsExponents}, and $\tilde{\QQ} := \BB_1\times(-2,-1)$. There exist $\dd_0,\tilde{\theta}_0 \in (0,1)$ depending only on $N$, $a$, $p$ and $q$ such that for every $U$, every sequence $\vep_j \to 0$ and every sequence $\{U_{\vep_j}\}_{j \in \NN}$ of weak solutions in $\QQ_2$ satisfying \eqref{eq:ConvergenceOscDecay} with $r=2$, such that
\[
U_{\vep_j} \leq 1, \qquad |\{U_{\vep_j} \leq 0 \} \cap \tilde{\QQ}|_a \geq \frac{|\tilde{\QQ}|_a}{2},
\]
and
\begin{equation}\label{eq:AssDd0Ff}
\|F_{\vep_j}\|_{L^{p,a}(\QQ_2)} + \|f_{\vep_j}\|_{L_\infty^q(Q_2)} \leq \dd_0,
\end{equation}
for every $j \in \NN$, then
\[
U_{\vep_j} \leq 1 - \tilde{\theta}_0 \quad \text{ in } \QQ_{1/2},
\]
for every $j \in \NN$.
\end{lem}
\begin{proof} Let $\sigma > 0$ be as in Lemma \ref{lem:IsoperimetricIneq}, $\dd_0 := 2^{-n_0}\sqrt{\delta}$, where $n_0$ is the largest integer such that 
\[
(n_0-1)\sigma \leq |\QQ_1 \cup \tilde{\QQ}|_a,
\]
and $\delta \in (0,1)$ will be chosen later. Let us set $U_j = U_{\vep_j}$, $F_j = F_{\vep_j}$, $f_j = f_{\vep_j}$ and define
\[
V_{j,n} := 2^n \big[ U_j - (1 - 2^{-n}) \big], \quad j,n \in \NN.
\]
The assumptions on $U_j$ imply that $V_{j,n}$ is a solution in $\QQ_2$ with $F_{j,n} := 2^n F_j$ and $f_{j,n} := 2^n f_j$, satisfying 
\[
V_{j,n} \leq 1, \qquad |\{V_{j,n} \leq 0 \} \cap \tilde{\QQ}|_a \geq \frac{|\tilde{\QQ}|_a}{2},
\]
for every $j,n \in \NN$ and $V_{j,n} \to V_n := 2^n[U - (1 - 2^{-n})]$ in $C([-4,4]:L^{2,a}(\BB_2))$ as $j \to +\infty$. Further, notice that by \eqref{eq:AssDd0Ff} and the definition of $\dd_0$, we have
\[
\|F_{j,n}\|_{L^{p,a}(\QQ_2)} + \|f_{j,n}\|_{L_\infty^q(Q_2)} \leq 2^n\dd_0 \leq 2^{n-n_0} \sqrt{\dd} \leq 1,
\]
for every $n \leq n_0$ and every $j$. Now, fix $\dd \in (0,1)$ and assume
\begin{equation}\label{eq:OscDecayL2NormCond}
\int_{\QQ_1} |y|^a (V_{j,n+1})_+^2 \dX \rd t \geq \dd.
\end{equation}
Then, using the definition of $V_{j,n}$ (and $V_{j,n+1}$) and that $V_{j,n} \leq 1$, we easily see that
\[
|\{V_{j,n} \geq \tfrac12 \}\cap \QQ_1|_a = |\{V_{j,n+1} \geq 0 \}\cap \QQ_1|_a \geq \int_{\QQ_1} |y|^a (V_{j,n+1})_+^2 \dX \rd t \geq \dd.
\]
Consequently, if \eqref{eq:OscDecayL2NormCond} holds true for every $n \leq n_0$ and some $j \in \NN$, $V_{j,n}$ satisfies the assumptions of Lemma \ref{lem:IsoperimetricIneq}, and so
\[
|\{0 < V_{j,n} < 1/2 \} \cap (\QQ_1 \cup \tilde{\QQ})|_a \geq \sigma,
\]
for every $n \leq n_0$. However, since $\{0 < V_{j,n} < 1/2 \} \cap \{0 < V_{j,m} < 1/2 \} = \emptyset$ for every $m \not= n$ and every $j$, the above inequality implies $|\QQ_1 \cup \tilde{\QQ}|_a \geq n_0\sigma$, in contradiction with the definition of $n_0$. Consequently, \eqref{eq:OscDecayL2NormCond} fails for $n=n_0$ and every $j \in \NN$, that is
\[
\int_{\QQ_1} |y|^a (V_{j,n_0+1})_+^2 \dX \rd t \leq \dd,
\]
for every $j \in \NN$. Let us set $k_0 := n_0 + 1$. Since $(V_{j,k_0})_+$ is a nonnegative subsolution (with $(F_{j,k_0})_+$ and $(f_{j,k_0})_+$), we obtain by \eqref{eq:L2LinfEstimate} and the definition of $\dd_0$
\[
\| (V_{j,k_0})_+\|_{L^\infty(\mathbb{Q}_{1/2})} \leq C \big( \| (V_{j,k_0})_+ \|_{L^{2,a}(\QQ_1)} + \|(F_{j,k_0})_+\|_{L^{p,a}(\QQ_1)} + \|(f_{j,k_0})_+\|_{L_\infty^q(Q_1)} \big) \leq C\sqrt{\dd},
\]
for some $C > 0$ depending only on $N$, $a$, $p$ and $q$, and every $j \in \NN$. Now, taking $\dd$ such that $C\dd \leq \tfrac12$, the above inequality gives $V_{j,k_0} \leq \tfrac12$ in $\QQ_{1/2}$ for every $j$ and thus
\[
U_j \leq 2^{-k_0-1} + \big( 1 - 2^{-k_0} \big) = 1 - 2^{-k_0-1} \quad \text{ in } \QQ_{1/2},
\]
for every $j$, which is our thesis, choosing $\tilde{\theta}_0 := 2^{-k_0-1}$.
\end{proof}
\begin{cor}\label{thm:OscDecay}
Let $N \geq 1$, $a \in (-1,1)$ and $(p,q)$ satisfying \eqref{eq:AssumptionsExponents}. There exist $\dd_0,\theta_0 \in (0,1)$ depending only on $N$, $a$, $p$ and $q$ such that for every $U$, every sequence $\vep_j \to 0$ and every sequence $\{U_{\vep_j}\}_{j \in \NN}$ of weak solutions in $\QQ_4$ satisfying \eqref{eq:ConvergenceOscDecay} with $r=4$ and \eqref{eq:FinalBoundFjfj}, there exist a subsequence $j_k \to +\infty$ such that 
\[
\begin{aligned}
\osc_{\QQ_{1/2}} U_{\vep_{j_k}} \leq (1 - \theta_0) \osc_{\QQ_2} U_{\vep_{j_k}} + \tfrac{1}{\dd_0}\big( \|F_{\vep_{j_k}}\|_{L^{p,a}(\QQ_2)} + \|f_{\vep_{j_k}}\|_{L_\infty^q(Q_2)} \big) 
\end{aligned}
\]
for every $k \in \NN$. 
\end{cor}
\begin{proof} Let $\dd_0,\tilde{\theta}_0 >0$ be as in Lemma \ref{lem:OscDecay}, and let $U_j = U_{\vep_{j}}$, $F_j = F_{\vep_{j}}$ and $f_j = f_{\vep_{j}}$. We fix $\delta > 0$, and define
\[
V_j := \frac{2}{K_j} \left( U_j - \frac{\esssup_{\QQ_2} U_j + \essinf_{\QQ_2} U_j}{2} \right), \qquad K_j : =  \delta + \osc_{\QQ_2} U_j + \tfrac{2}{\dd_0}(\|F_j\|_{L^{p,a}(\QQ_2)} + \|f_j\|_{L_\infty^q(Q_2)}).
\]
We have $K_j \geq \dd$ for every $j \in \NN$ and, further, each $V_j$ is a weak solution in $\QQ_4$ (and thus in $\QQ_2$) with $\bar{F}_j := \tfrac{2}{K_j} F_j$, $\bar{f}_j := \tfrac{2}{K_j} f_j$ satisfying
\[
V_j \leq 1 \quad\text{in } \QQ_2, \qquad \|\bar{F}_j\|_{L^{p,a}(\QQ_2)} + \|\bar{f}_j\|_{L_\infty^q(Q_2)} = \frac{2}{K_j} (\|F_j\|_{L^{p,a}(\QQ_2)} + \|f_j\|_{L_\infty^q(Q_2)}) \leq \dd_0,
\]
for every $j$, by definition of $K_j$.

Now, in view of \eqref{eq:ConvergenceOscDecay}, we have that $\{U_j\}_{j\in\NN}$ is uniformly bounded in $L^{2,a}(\QQ_4)$ and thus, by \eqref{eq:L2LinfEstimate} and \eqref{eq:FinalBoundFjfj}, it is bounded in $L^{\infty}(\QQ_2)$. This implies the existence of a subsequence $j_k \to + \infty$, $K \in [\dd,+\infty)$ and $l \in \RR$ (both $K$ and $l$ are finite depending on $\dd_0$ and $\|U\|_{L^{2,a}(\QQ_2)}$) such that
\[
K_k \to K, \qquad \esssup_{\QQ_2} U_k + \essinf_{\QQ_2} U_k \to l
\]
as $k \to + \infty$, where $U_k := U_{j_k}$ and $K_k := K_{j_k}$. As a consequence, one can easily verify that
\[
V_k \to V := \frac{2}{K} \left( U - \frac{l}{2} \right) \quad \text{ in } C([-4,4]:L^{2,a}(\BB_2)),
\]
as $k \to +\infty$, where $V_k := V_{j_k}$. Furthermore, we notice that
\[
\begin{aligned}
\text{if } \qquad |\{V_k \leq 0 \} \cap \tilde{\QQ}|_a &\geq \tfrac{|\tilde{\QQ}|_a}{2} \quad \text{for a finite number of indexes} \\ 
\text{then } \quad |\{-V_k \leq 0 \} \cap \tilde{\QQ}|_a &\geq \tfrac{|\tilde{\QQ}|_a}{2} \quad \text{for an infinite number of indexes},
\end{aligned}
\]
and thus, up to passing to an additional subsequence and eventually considering $-V_k$ instead of $V_k$, we may assume
\[
|\{V_k \leq 0 \} \cap \tilde{\QQ}|_a \geq \frac{|\tilde{\QQ}|_a}{2},
\]
for every $k \in \NN$. Then, Lemma \ref{lem:OscDecay} yields
\[
V_k \leq 1-\tilde{\theta}_0 \quad \text{ in } \QQ_{1/2},
\]
for every $k\in\NN$, that is
\[
U_k \leq \tfrac{1-\tilde{\theta}_0}{2} \big( \delta + \osc_{\QQ_2} U_k \big) + \frac{\esssup_{\QQ_2} U_k + \essinf_{\QQ_2} U_k}{2} + \tfrac{1-\tilde{\theta}_0}{\dd_0}(\|F_k\|_{L^{p,a}(\QQ_2)} + \|f_k\|_{L_\infty^q(Q_2)}) \quad \text{ in } \QQ_{1/2}.
\]
Taking the $\sup_{\QQ_{1/2}}$ and subtracting $\inf_{\QQ_{1/2}} U_k$ in both sides, it is not difficult obtain
\[
\osc_{\QQ_{1/2}} U_k \leq \tfrac{1-\tilde{\theta}_0}{2}  \delta + \big( 1 - \tfrac{\tilde{\theta}_0}{2} \big) \osc_{\QQ_2} U_k + \tfrac{1-\tilde{\theta}_0}{\dd_0}(\|F_k\|_{L^{p,a}(\QQ_2)} + \|f_k\|_{L_\infty^q(Q_2)}),
\]
for every $k \in \NN$. Our thesis follows by passing to the limit as $\delta \to 0$ and choosing $\theta_0 = \tfrac{\tilde{\theta}_0}{2}$.
\end{proof}
\begin{proof}[Proof of Proposition \ref{thm:CalphaBound}] Let $\dd_0$ and $\theta_0$ as in Corollary \ref{thm:OscDecay}, and $\delta > 0$. We set $U_j := U_{\vep_j}$, $F_j := F_{\vep_j}$, $f_j := f_{\vep_j}$ and define
\[
V_j(X,t) := \frac{1}{K_j} U_j(X,t), \qquad K_j := \delta + \| U_j \|_{L^\infty(\QQ_4)} + \|F_j \|_{L^{p,a}(\QQ_4)} + \|f_j \|_{L_\infty^q(Q_4)},
\]
for every $j \in \NN$. Notice that thanks to \eqref{eq:FinalBoundFjfj}, \eqref{eq:ConvergenceOscDecay} and \eqref{eq:L2LinfEstimate}, $\{K_j\}_{j\in\NN}$ is uniformly bounded. Thus, similar to the proof above, we have $K_j \to K \geq \delta$, as $j \to +\infty$,  up to passing to a subsequence. Consequently,
\begin{equation}\label{eq:ConvCalphaBoundsV}
V_j \to V \quad \text{ in } C([-16,16]:L^{2,a}(\BB_4)), \qquad V = \frac{1}{K}U,
\end{equation}
as $j\to + \infty$.

Now, let $r_n := 4^{-n}$, $n \in \NN$. We show that there exist $n_0 \in \NN$, $C > 0$ and $\alpha \in (0,1)$ depending only on $N$, $a$, $p$ and $q$ such that, up to passing to a subsequence $\vep_k := \vep_{j_k} \to 0$, if $V_k := V_{j_k}$, then for every $(X_0,t_0) := (x_0,y_0,t_0) \in \QQ_1$ we have
\begin{equation}\label{eq:OscDecayFinal}
\osc_{\QQ_{4^{-n+1}}(X_0,t_0)} V_k \leq C 4^{-\alpha n}, 
\end{equation}
for every $n \geq n_0$ and every $k$ such that $\vep_k \leq r_n^2$. A standard contradiction argument (see for instance \cite[Proof of Theorem 6.1]{AudritoTerracini2020:art}, \cite[Proof of Theorem 4.1]{SireTerVita1:art} and \cite[Proof of Theorem 4.2]{SireTerVita2:art}) shows that it is enough to prove \eqref{eq:OscDecayFinal} for points $(X_0,t_0)$ with $y_0 = 0$: basically, if H\"older regularity (or oscillation decay) fails at some point, then such point must belong to the region where the weight $|y|^a$ is degenerate or singular). As a consequence, since the equation of $V_k$ is invariant under translations w.r.t. $x$ and $t$, it is enough to consider the case $(X_0,t_0) = 0$.

With this goal in mind, let us define
\[
V_{j,n}(X,t) := \frac{1}{K_j} V_j(r_nX,r_n^2t), \qquad j,n \in \NN.
\]
By Remark \ref{rem:ScalingProp} each $V_{j,n}$ satisfies
\[
\int_{\mathbb{Q}_4} |y|^a ( \tfrac{\varepsilon_j}{r_n^2} \partial_t V_{j,n} \partial_t \eta + \partial_t V_{j,n} \eta + \nabla V_{j,n} \cdot\nabla\eta - F_{j,n} \eta) \dX\rd t - \int_{Q_4} f_{j,n} \eta|_{y=0} \dx \rd t = 0,
\]
for every $\eta \in C_0^\infty(\mathbb{Q}_4)$, where
\[
\begin{aligned}
F_{j,n}(X,t) &:= \frac{1}{K_j} r_n^2 F_j(r_nX,r_n^2t),  \\
f_{j,n}(x,t) &:= \frac{1}{K_j} r_n^{1-a} f_j(r_nx,r_n^2t).
\end{aligned}
\]
By definition and scaling, we easily see that $\|V_{j,n}\|_{L^\infty(\QQ_4)} \leq 1$ and
\[
\|F_{j,n}\|_{L^{p,a}(\QQ_4)} + \|f_{j,n}\|_{L_\infty^q(Q_4)} \leq r_n^{2 - \frac{N+3+a}{p}} \frac{\|F_j\|_{L^{p,a}(\QQ_4)}}{K_j} + r_n^{1 - a - \frac{N}{q}} \frac{\|f_j\|_{L_\infty^q(Q_4)}}{K_j} \leq r_n^\nu,
\]
for every $j,n \in \NN$, where we have set 
\[
\nu := \min\{2 - \tfrac{N+3+a}{p},1 - a - \tfrac{N}{q}\} > 0.
\]
The positivity of $\nu$ follows from \eqref{eq:AssumptionsExponents}. In a moment we will choose $n_0 \in \NN$ such that $r_n^\nu \leq \dd_0$, for every $n \geq n_0$ and thus we may assume
\[
\|F_{j,n}\|_{L^{p,a}(\QQ_4)} + \|f_{j,n}\|_{L_\infty^q(Q_4)} \leq \delta_0,
\]
for every $n \geq n_0$ and $j \in \NN$. Further, exploiting \eqref{eq:ConvCalphaBoundsV}, it is not difficult to check that for every fixed $n \in \NN$, we have
\[
V_{j,n} \to \tilde{V}_n \quad \text{ in } C([-16,16]:L^{2,a}(\BB_4)),
\]
as $j \to +\infty$, where $\tilde{V}_n(X,t) := V(r_nX,r_n^2t)$.

Then, for every fixed $n \geq n_0$ the sequence $\{V_{j,n}\}_{j\in\NN}$ satisfies the assumptions of Corollary \ref{thm:OscDecay} and thus there exist subsequences $j_k \to +\infty$ and $\vep_k := \vep_{j_k} \to 0$ such that for every fixed $n \geq n_0$ and every $k$ such that $\vep_k \leq r_n^2$, we have
\[
\osc_{\QQ_1} V_{k,n} \leq (1 - \theta_0) \osc_{\QQ_4} V_{k,n} + \tfrac{1}{\dd_0} r_n^\nu,
\]
where $V_{k,n} := V_{j_k,n}$. Re-writing such inequality in terms of $V_k$, it follows
\begin{equation}\label{eq:OscDecayCalpha}
\osc_{\QQ_{4^{-n}}} V_k \leq (1 - \theta_0) \osc_{\QQ_{4^{-n+1}}} V_k + \tfrac{1}{\dd_0} 4^{-\nu n},
\end{equation}
for every $k$ such that $\vep_k \leq r_n^2$. Let us take $\alpha \in (0,1)$, $n_0 \in \NN$ and $C > 0$ satisfying
\[
\alpha < \nu, \qquad 1 - \frac{\theta_0}{2} \leq 4^{-\alpha}, \qquad 4^{\nu n_0} \geq \tfrac{1}{\theta_0\dd_0}, \qquad C = 2\cdot 4^{\alpha n_0}.
\]
Then, if $n = n_0$, \eqref{eq:OscDecayFinal} holds true by definition of $C$, recalling that $\|V_k\|_{L^\infty(\QQ_4)} \leq 1$, for every $k$. Now, assume that \eqref{eq:OscDecayFinal} holds true for some $n\geq n_0$. Then, by definition of $n_0$, $\alpha$, $C$ and \eqref{eq:OscDecayCalpha} and the inductive assumption, we obtain
\[
\begin{aligned}
\osc_{\QQ_{4^{-n}}} V_k &\leq (1 - \theta_0) \osc_{\QQ_{4^{-n+1}}} V_k + \theta_0 4^{\nu(n_0 - n)} \leq (1 - \theta_0) C 4^{-\alpha n} + \theta_0 4^{\nu(n_0 - n)} \\
&\leq (1 - \theta_0) C 4^{-\alpha n} + \theta_0 4^{\alpha(n_0 - n)} = \big(1 - \tfrac{\theta_0}{2} \big) C 4^{-\alpha n} \leq C4^{-\alpha(n+1)},
\end{aligned}
\]
for every $k$ such that $\vep_k \leq r_{n+1}^2$, and so \eqref{eq:OscDecayFinal} follows.

Notice that, since $\vep_k \to 0$, we may extract a decreasing subsequence $\vep_n := \vep_{k_n}$ such that $\vep_n \leq r_n^2$, for every $n \in \NN$ and thus, by \eqref{eq:OscDecayFinal}, we have
\begin{equation}\label{eq:OscDecayFinalBis}
\osc_{\QQ_{4^{-n+1}}(X_0,t_0)} V_n \leq C 4^{-\alpha n}, 
\end{equation}
for every $(X_0,t_0) \in \QQ_1$ and every $n \geq n_0$, where $V_n := V_{k_n}$.

To complete the proof, we check that
\begin{equation}\label{eq:HolderBoundVk}
|V_n(X,t) - V_n(Z,\tau)| \leq C \|(X-Z,t-\tau)\|^\alpha,
\end{equation}
for every $(X,t),(Z,\tau) \in \QQ_1$, $n \in \NN$, taking eventually new constants $C > 0$ and $\alpha \in (0,1)$ depending only on $N$, $a$, $p$ and $q$. Since the constants $\alpha$, $C$ and $n_0$ in \eqref{eq:OscDecayFinalBis} are independent of $(X_0,t_0) \in \QQ_1$, it is enough to consider the case $(Z,\tau) = 0$.

Given $(X,t) \in \QQ_1$, there is $n$ (depending on $(X,t)$) such that $(X,t) \in \QQ_{4^{-n}}\setminus \QQ_{4^{-n-1}}$, that is $4^{-n-1} \leq \|(X,t)\| \leq 4^{-n}$. If $n \leq n_0$, then
\[
|V_n(X,t) - V_n(0)| \leq \osc_{\QQ_{4^{-n}}} V_n \leq 2\|V_n\|_{L^\infty(\QQ_{4^{-n}})} \leq 2 \leq 2 \cdot 4^{\alpha(n_0+1)} 4^{-\alpha(n+1)} \leq 4^\alpha C \|(X,t)\|^\alpha,
\]
where we have used that $\|V_n\|_{L^\infty(\QQ_1)} \leq 1$. When $n \geq n_0$ we directly apply \eqref{eq:OscDecayFinalBis} to deduce
\[
|V_n(X,t) - V_n(0)| \leq \osc_{\QQ_{4^{-n}}} V_n \leq C4^{-\alpha(n+1)}  \leq C \|(X,t)\|^\alpha,
\]
and \eqref{eq:HolderBoundVk} follows. Now, re-writing \eqref{eq:HolderBoundVk} in terms of $U_n := U_{k_n}$, we obtain
\[
\| U_n(X,t) - U_n(Z,\tau) \| \leq C \big( \delta + \| U_n \|_{L^\infty(\QQ_4)} + \|F_n\|_{L^{p,a}(\QQ_4)} + \|f_n\|_{L_\infty^q(Q_4)} \big) \|(X-Z,t-\tau)\|^\alpha,
\]
for every $n \in \NN$ and every $(X,t),(Z,\tau) \in \QQ_1$, where $F_n := F_{j_{k_n}}$ and $f_n := f_{j_{k_n}}$. The bound \eqref{eq:HolderBoundsEstFf} follows by letting $\delta \to 0$.
\end{proof}
\begin{rem}\label{rem:Translation2} As pointed out in Remark \ref{rem:Translation1}, the proofs of this section work for weak solutions in $\QQ_r(X_0,t_0)$ too, with minor modifications and constants independent of $(X_0,t_0) \in\RR^{N+2}$.
\end{rem}
%
%
%
%
%

%
%
%
%
%

%
\section{Proof of Theorem \ref{thm:MainIntro} and Corollary \ref{cor:MainIntro}}\label{Sec:FinalProofs}
\begin{proof}[Proof of Theorem \ref{thm:MainIntro}] Let $N \geq 1$, $a \in (-1,1)$, $\beta$ as in \eqref{eq:Reaction}, $U_0$ as in \eqref{eq:Ass2Data} and let  $\{U_\vep\}_{\vep \in (0,1)}$ be a family of minimizers of $\mathcal{F}_\vep$ defined in \eqref{eq:Functional}. 

By Lemma \ref{Lem:EulerEqMinFF} and Proposition \ref{prop:weaklimit}, there exist $U \in \UU_0$ and a sequence $\vep_j \to 0$ such that the pair $(U_j,u_j) := (U_{\vep_j},U_{\vep_j}|_{y=0})$ satisfies
\[
\varepsilon_j \int_{\mathbb{Q}_\infty} |y|^a \partial_t U_j \partial_t \eta \dX\rd t + \int_{\mathbb{Q}_\infty} |y|^a (\partial_t U_j \eta + \nabla U_j \cdot\nabla\eta) \dX\rd t + \int_{Q_\infty} \beta(u_j) \eta|_{y=0} \dx \rd t = 0,
\]
for every $\eta \in C_0^\infty(\mathbb{Q}_\infty)$ and every $j \in \NN$ and, further,
\begin{equation}\label{eq:StrongConvFinalThm}
U_j \to U \quad \text{ in } C_{loc}([0,\infty):L^{2,a}(\RR^{N+1}))
\end{equation}
as $j \to +\infty$. Now, let us fix $R > 0$, $t_0 := (8R)^2$ and consider the sequence $V_j(X,t) := U_j(RX,R^2t + t_0)$. Setting $f_j := \beta(u_j)$ (with $\|f_j\|_{L^\infty(Q_\infty)} \leq \|\beta\|_{L^\infty(\RR)}$ for every $j \in \NN$) and $f_{j,R}(x,t) := R^{1-a}f_j(Rx,R^2t)$, we have by Remark \ref{rem:ScalingProp}
\[
\int_{\mathbb{Q}_8} |y|^a ( \tfrac{\varepsilon_j}{R^2} \partial_t V_j \partial_t \eta + \partial_t V_j \eta + \nabla V_j \cdot\nabla\eta ) \dX\rd t - \int_{Q_8} f_{j,R} \eta|_{y=0} \dx \rd t = 0,
\]
for every $\eta \in C_0^\infty(\QQ_8)$ and every $j \in \NN$. Consequently, by \eqref{eq:StrongConvFinalThm}, we may apply Proposition \ref{thm:CalphaBound} and Proposition \ref{prop:L2LinfEstimate} to the sequence $\{V_j\}_{j\in\NN}$ to deduce 
\[
\| V_{j_k} \|_{C^{\alpha,\alpha/2}(\QQ_1)} \leq C \big( \| V_{j_k} \|_{L^2(\QQ_8)} + \|f_{j_k,R}\|_{L^\infty(Q_8)} \big),
\]
for every $k \in \NN$, some $C > 0$ and $\alpha \in (0,1)$ depending only on $N$ and $a$, and some sequence $j_k \to +\infty$ (depending also on $R$). Re-writing this uniform bound in terms of $U_{j_k}$ and recalling the definition of $f_{j_k}$, we find
\begin{equation}\label{eq:HolderBoundFinal}
\| U_{j_k} \|_{C^{\alpha,\alpha/2}(\tilde{\QQ}_R)} \leq C R^{-\alpha} \Big[ R^{-\frac{N+3+a}{2}} \| U_{j_k} \|_{L^2(\QQ_{8R}^+)} + R^{1-a}\|\beta\|_{L^\infty(\RR)}  \Big],
\end{equation}
for every $k \in \NN$, where $\tilde{\QQ}_R := \BB_R \times (63 R^2,65 R^2)$. Finally, since
\[
\| U_{j_k} \|_{L^2(\QQ_{8R}^+)} \leq CR^2,
\]
uniformly in $k$ by Proposition \ref{thm:UnifEnergyEst} and Poincar\'e inequality (up to taking $C$ larger depending on $U_0$), we can combine Remark \ref{rem:Translation1}, Remark \ref{rem:Translation2} and a standard covering argument to complete the proof of  \eqref{eq:UnifHoldParIntro}.
\end{proof}
\begin{proof}[Proof of Corollary \ref{cor:MainIntro}] The thesis follows by combining Proposition \ref{prop:weaklimit}, Theorem \ref{thm:MainIntro} and the Arzel\`a-Ascoli theorem.
\end{proof}

\newpage




\normalcolor
\appendix

\section{}\label{App:WeightedSpaces}
In this section we review the definitions and some well-known properties of a class of energy spaces we use in our functional setting. The references for this part are \cite{ChiarenzaSerapioni1985:art,FabesKS1982:art,Hajlasz1996:art,JinLiXiong:art}. The symbols $I$, $\BB$ and $\QQ$ denote a generic interval in $\RR$, a generic ball in $\RR^{N+1}$, and a generic parabolic cylinder in $\RR^{N+2}$, respectively.

For $p \in (1,\infty)$ and $a \in (-1,1)$, we define
\[
\|U\|_{L^{p,a}(\BB)} := \Big( \int_\BB |y|^a|U|^p \dX \Big)^{\frac{1}{p}}, \qquad \|U\|_{L^{p,a}(\QQ)} := \Big( \int_\QQ |y|^a|U|^p \dX \rd t \Big)^{\frac{1}{p}}.
\]
We denote with $L^{p,a}(\BB)$ the closure of $C_0^\infty(\BB)$ w.r.t. the norm $\|\cdot\|_{L^{p,a}(\BB)}$, and $L^{p,a}(\QQ)$ the closure of $C_0^\infty(\QQ)$ w.r.t. the norm $\|\cdot\|_{L^{p,a}(\QQ)}$. Since the weight $y \to |y|^a \in L_{loc}^1(\RR)$, it is not difficult to show that
\[
\|U\|_{L^\infty(\BB)} := \esssup_{X \in \BB_1} |U(X)| = \lim_{p\to+\infty} \|U\|_{L^{p,a}(\BB)}.
\]
We set
\[
\begin{aligned}
\|U\|_{H^{1,a}(\BB)} &:= \Big( \int_\BB |y|^a U^2 \dX+ \int_\BB |y|^a |\nabla U|^2 \rd X \Big)^{\frac{1}{2}} \\
\|U\|_{H^{1,a}(\QQ)} &:= \Big( \int_\QQ |y|^a U^2 \dX\rd t + \int_\QQ |y|^a \left(|\partial_tU|^2 + |\nabla U|^2 \right)\rd X\rd t \Big)^{\frac{1}{2}}.
\end{aligned}
\]
The space $H^{1,a}(\BB)$ is the closure of $C^\infty(\BB)$ w.r.t. the norm $\|\cdot\|_{H^{1,a}(\BB)}$, while $H_0^{1,a}(\BB)$ denotes the closure of $C_0^\infty(\BB)$ w.r.t. the norm $\|\cdot\|_{H^{1,a}(\BB)}$. The spaces $H^{1,a}(\QQ)$ and $H_0^{1,a}(\QQ)$ are defined analogously, while $H^{-1,a}(\BB)$ is the dual space of $H_0^{1,a}(\BB)$.

Finally, we set
\[
\begin{aligned}
\|U\|_{L^2(I:L^{2,a}(\BB))} &:= \Big( \int_I \|U(t)\|_{L^{2,a}(\BB)}^2 \rd t \Big)^{\frac{1}{2}} \\
\|U\|_{L^2(I:H^{1,a}(\BB))} &:= \Big( \int_I \|U(t)\|_{H^{1,a}(\BB)}^2 \rd t \Big)^{\frac{1}{2}},
\end{aligned}
\]
and we define the spaces $L^2(I:L^{2,a}(\BB))$ and $L^2(I:H^{1,a}(\BB))$ as the closure of the space $C^\infty(\BB\times I)$ w.r.t. the norms $\|\cdot\|_{L^2(I:L^{2,a}(\BB))}$ and $\|\cdot\|_{L^2(I:H^{1,a}(\BB))}$, respectively.

As it is well-known, these spaces enjoy some notable properties that we resume below.
\begin{thm}\label{thm:TraceThm} (cf. \cite[Proposition 2.1 and Proposition 2.6]{JinLiXiong:art})

\noindent Let $N \geq 1$, $a \in (-1,1)$. There exist a unique bounded linear operator $|_{y=0} : H^{1,a}(\BB_1) \to H^{\frac{1-a}{2}}(B_1)$ and a constant $C > 0$ depending only on $N$ and $a$, such that if $u = U|_{y=0}$ then
\[
\int_{B_1} u^2 \rd x + \iint_{B_1\times B_1} \frac{(u(x) - u(z))^2}{|x-z|^{N+1-a}} \rd x\rd z \leq C \Big( \int_{\mathbb{B}_1} |y|^a U^2 \rd X +  \int_{\mathbb{B}_1} |y|^a |\nabla U|^2 \rd X  \Big)
\]
for every $U \in H^{1,a}(\BB_1)$. Furthermore, we have
\begin{equation}\label{eq:TraceIneqWithEps}
\int_{B_1} u^2 \rd x \leq C \Big( A^{\frac{1+a}{2}} \int_{\mathbb{B}_1} |y|^a U^2 \rd X +  A^{-\frac{1-a}{2}} \int_{\mathbb{B}_1} |y|^a |\nabla U|^2 \rd X  \Big),
\end{equation}
for every $A > 1$. Finally, we also have
\begin{equation}\label{eq:EmbeddingTrace}
\Big(\int_{B_1} |u|^{2q} \rd x \Big)^{\frac{1}{q}} \leq C \int_{\mathbb{B}_1} |y|^a |\nabla U|^2 \rd X,
\end{equation}
for every $U \in H^{1,a}_0(\BB_1)$ and every $q \in [1,\tilde{\sigma}]$, where
\[
\tilde{\sigma} := \frac{N}{N-1+a} = 1 + \frac{1-a}{N-1+a}.
\]
\end{thm}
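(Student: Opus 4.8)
The three assertions are standard facts about traces in the weighted space $H^{1,a}(\BB_1)$ with the Muckenhoupt weight $|y|^a$ ($a\in(-1,1)$); my plan is to reduce all of them to an elementary one--dimensional computation in the $y$--variable together with two classical ingredients — a weighted Poincar\'e inequality and the fractional Sobolev embedding — as in \cite{JinLiXiong:art,Nekvinda1993:art,FabesKS1982:art}. Throughout I set $s:=\tfrac{1-a}{2}\in(0,1)$, so that $N+1-a=N+2s$ and $\tfrac{N}{N-1+a}=\tfrac{N}{N-2s}$, and by density I argue only for $U\in C^\infty(\overline{\BB_1})$.

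First, for the trace operator and the $L^2$--bounds (including \eqref{eq:TraceIneqWithEps}): fixing $x$ in a slightly smaller ball and $\rho\in(0,1)$, I would start from $u(x)=U(x,y)-\int_0^y\partial_sU(x,s)\ds$ for $0<y<\rho$, apply the Cauchy--Schwarz inequality in $s$ against the weight, using $\int_0^\rho s^{-a}\ds<\infty$ (this is where $a<1$ is used) to get $|u(x)|^2\le 2|U(x,y)|^2+C\rho^{1-a}\int_0^\rho s^a|\partial_sU(x,s)|^2\ds$, then multiply by $y^a$ and integrate over $y\in(0,\rho)$, using $\int_0^\rho y^a\dy<\infty$ (this is where $a>-1$ is used), to arrive at
\[
|u(x)|^2\le C\rho^{-1-a}\int_0^\rho y^a|U(x,y)|^2\dy+C\rho^{1-a}\int_0^\rho s^a|\partial_sU(x,s)|^2\ds .
\]
Integrating in $x$, and handling points near $\partial B_1$ (where the vertical segment inside $\BB_1$ is short) by a routine Whitney/covering argument, gives $\int_{B_1}u^2\dx\le C\rho^{-1-a}\int_{\BB_1}|y|^aU^2\dX+C\rho^{1-a}\int_{\BB_1}|y|^a|\nabla U|^2\dX$. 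Taking $\rho\sim1$ yields boundedness of the trace map $U\mapsto U|_{y=0}$ from $H^{1,a}(\BB_1)$ into $L^2(B_1)$; taking $\rho=A^{-1/2}$ yields precisely \eqref{eq:TraceIneqWithEps}.

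Next, for the Gagliardo seminorm in the first estimate (which also gives $u\in H^s(B_1)$): I would split $\iint_{B_1\times B_1}$ into the region $|x-z|\ge\tfrac12$, bounded trivially by $\|u\|_{L^2(B_1)}^2$ and hence by the step above, and the region $|x-z|<\tfrac12$. On the latter I would insert the average $\overline U_Q$ of $U$ over a cube $Q$ of side comparable to $|x-z|$ placed at height $\sim|x-z|$ over the segment $[x,z]$, write $u(x)-u(z)=(u(x)-\overline U_Q)+(\overline U_Q-u(z))$, and bound each term by combining the one--dimensional device above (to climb from the interface to height $|x-z|$) with a Poincar\'e inequality on $Q$ (the weight $|y|^a$ being comparable to a constant on $Q$, which sits away from $\{y=0\}$). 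Dividing by $|x-z|^{N+2s}$, integrating and using Fubini then produces
\[
\iint_{B_1\times B_1}\frac{(u(x)-u(z))^2}{|x-z|^{N+1-a}}\dx\dz\le C\Bigl(\int_{\BB_1}|y|^aU^2\dX+\int_{\BB_1}|y|^a|\nabla U|^2\dX\Bigr).
\]
I expect this to be the main obstacle: the tempting shortcut of joining $(x,0)$ to $(z,0)$ through a single vertical segment of height $|x-z|$ produces a logarithmic divergence and genuinely fails, so one must use averages over Whitney cubes (or, equivalently, the minimal--extension characterisation of $H^s$); this is exactly what \cite[Proposition 2.1]{JinLiXiong:art} supplies.

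Finally, for the embedding \eqref{eq:EmbeddingTrace}: given $U\in H^{1,a}_0(\BB_1)$ I would extend $U$ by $0$ to $\RR^{N+1}$ and combine the weighted Poincar\'e inequality $\int_{\BB_1}|y|^aU^2\dX\le C\int_{\BB_1}|y|^a|\nabla U|^2\dX$ (which lets the lower--order terms be discarded), the global version of the previous step — $\|u\|_{H^s(\RR^N)}^2\le C\int_{\RR^{N+1}}|y|^a|\nabla U|^2\dX$ with $s=\tfrac{1-a}{2}$ — and the fractional Sobolev embedding $H^s(\RR^N)\hookrightarrow L^{2N/(N-2s)}(\RR^N)=L^{2\tilde\sigma}(\RR^N)$. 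This proves \eqref{eq:EmbeddingTrace} for $q=\tilde\sigma$, and since $u$ is supported in the bounded set $B_1$, H\"older's inequality in $x$ gives the full range $q\in[1,\tilde\sigma]$. Since the statement is essentially off the shelf, in the write--up I would simply cite \cite{JinLiXiong:art} for the first estimate and for \eqref{eq:EmbeddingTrace}, and include only the elementary rescaling yielding \eqref{eq:TraceIneqWithEps} and the weighted Poincar\'e step.
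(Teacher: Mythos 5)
Your proposal is correct, and it is in fact more than the paper itself offers: the paper gives no proof of this theorem, it simply quotes \cite[Propositions 2.1 and 2.6]{JinLiXiong:art}, which is also your declared fallback. Your sketch is the standard Lions-type trace argument, and the key elementary computation checks out: Cauchy--Schwarz in $s$ with the weights $s^{-a}$ and $s^{a}$ gives $|u(x)|^2\le C\rho^{-1-a}\int_0^\rho y^a U^2\,{\rm d}y+C\rho^{1-a}\int_0^\rho s^a|\partial_s U|^2\,{\rm d}s$, and the choice $\rho=A^{-1/2}$ reproduces exactly the exponents $A^{\frac{1+a}{2}}$ and $A^{-\frac{1-a}{2}}$ in \eqref{eq:TraceIneqWithEps}; likewise your treatment of the Gagliardo seminorm via averages on cubes at height $\sim|x-z|$ (rather than a single vertical segment) is the right mechanism, and \eqref{eq:EmbeddingTrace} follows as you say from the homogeneous trace estimate plus $\dot H^{\frac{1-a}{2}}(\RR^N)\hookrightarrow L^{2\tilde\sigma}(\RR^N)$ and H\"older on the bounded set $B_1$. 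Two minor points deserve to be made explicit rather than waved through: (i) for $x$ near $\partial B_1$ the vertical column of height $\rho$ is not contained in $\BB_1$, so the slicing argument as written degenerates there; one either proves the estimate on a cylinder or half-ball and transfers it by a (weighted, $A_2$-compatible) extension or covering step --- this is precisely the form in which the cited propositions are stated, so your plan to cite them for the first estimate closes this; (ii) the Sobolev step and the very definition of $\tilde\sigma=\frac{N}{N-1+a}$ require $N-1+a>0$, i.e.\ $N>2s$, which is automatic for $N\ge2$ (the only case used in the paper) but not for $N=1$ with $a\le 0$, an implicit restriction already present in the statement. With these caveats noted, your route and the paper's are essentially the same: rely on \cite{JinLiXiong:art} for the substantive trace facts and observe that \eqref{eq:TraceIneqWithEps} is just the one-dimensional slicing estimate with the scale $\rho$ left free.
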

\begin{thm}\label{thm:SobEmbedLocalEll} (\cite[Theorem 1.2]{FabesKS1982:art} and \cite[Theorem 6, $p=2$, $s=N+1+a$]{Hajlasz1996:art})

\noindent Assume either $N \geq 2$ and $a \in (-1,1)$, or $N=1$ and $a \in [0,1)$. Then there exists a constant $\mathcal{S}_0 > 0$ depending only on $N$ and $a$ such that
\[
\|U\|_{L^{2\sigma,a}(\mathbb{B}_r)}^2  \leq \mathcal{S}_0 \, \big[ \, \tfrac{1}{r^2} \|U\|_{L^{2,a}(\mathbb{B}_r)}^2 +  \|\nabla U\|_{L^{2,a}(\mathbb{B}_r)}^2  \big],
\]
for all $r > 0$ and all $U \in H^{1,a}(\mathbb{B}_r)$, where
\[
\sigma := 
\begin{cases}
+\infty \quad &\text{ if } N=1, \; a = 0 \\
1 + \frac{2}{N-1+a} \quad &\text{ otherwise}.
\end{cases}
\]
Further, if $N=1$ and $a \in (-1,0)$, then\footnote{This second inequality follows by \cite[Theorem 6, part 3]{Hajlasz1996:art}, proceeding as in \cite[Theorem 9.12]{Brezi2019:art}.}
\[
\|U\|_{L^\infty(\mathbb{B}_r)}^2  \leq \mathcal{S}_0 \, r^{|a|} \big[ \, \tfrac{1}{r^2} \|U\|_{L^{2,a}(\mathbb{B}_r)}^2 +  \|\nabla U\|_{L^{2,a}(\mathbb{B}_r)}^2  \big] 
\]
for all $r > 0$ and all $U \in H^{1,a}(\mathbb{B}_r)$.
\end{thm}
\begin{thm}\label{thm:SobEmbedLocalPar} (\cite[Lemma 2.1]{ChiarenzaSerapioni1985:art})

\noindent Assume either $N \geq 2$ and $a \in (-1,1)$, or $N=1$ and $a \in [0,1)$. Then there exists a constant $\mathcal{S} > 0$ depending only on $N$ and $a$ such that
\begin{equation}\label{eq:ParSobIneq1}
\int_{\mathbb{Q}_r} |y|^a |U|^{2\gamma} \rd X\rd t
\leq \mathcal{S} \Big( \tfrac{1}{r^2}\int_{\mathbb{Q}_r} \!|y|^a U^2 \rd X\rd t + \int_{\mathbb{Q}_r} \!|y|^a |\nabla U|^2 \rd X\rd t \Big) \,\esssup_{t \in (-r^2,r^2)} \Big( \int_{\mathbb{B}_r} \!|y|^a U^2 \rd X \Big)^{\gamma -1}
\end{equation}
for all $r > 0$ and all $U \in L^2(-r^2,r^2 : H^{1,a}(\mathbb{B}_r))$, where
\[
\gamma := \frac{2\sigma - 1}{\sigma} = 1 + \frac{2}{N + 1 + a}.
\]
Further, if $N=1$ and $a \in (-1,0)$, then
\begin{equation}\label{eq:ParSobIneq2}
\int_{\mathbb{Q}_r} |y|^a U^4 \rd X\rd t
\leq \mathcal{S} r^{|a|}\Big( \tfrac{1}{r^2}\int_{\mathbb{Q}_r} \!|y|^a U^2 \rd X\rd t + \int_{\mathbb{Q}_r} \!|y|^a |\nabla U|^2 \rd X\rd t \Big) \,\esssup_{t \in (-r^2,r^2)} \Big( \int_{\mathbb{B}_r} \!|y|^a U^2 \rd X \Big)
\end{equation}
for all $r > 0$ and all $U \in L^2(-r^2,r^2 : H^{1,a}(\mathbb{B}_r))$.

\end{thm}
\begin{proof} It is enough to prove the statement in the case $r=1$, $N \geq 2$ and $a \in (-1,1)$ (the other cases are similar). Let $U \in L^2(-1,1 : H^{1,a}(\mathbb{B}_1))$, $\sigma$ as in Theorem \ref{thm:SobEmbedLocalEll}, $\gamma = (2\sigma-1)/\sigma$ and $\sigma' = \sigma/(\sigma - 1)$. Assume first $\sigma < +\infty$. For a.e. $t \in (0,1)$, we have
\[
\begin{aligned}
\int_{\mathbb{B}_1} |y|^a |U|^{2\gamma} \dX &= \int_{\mathbb{B}_1} |y|^a U^2 |U|^{2/\sigma'} \dX \leq \Big(\int_{\mathbb{B}_1} |y|^a |U|^{2\sigma} \dX \Big)^{1/\sigma} \Big(\int_{\mathbb{B}_1} |y|^a U^2 \dX \Big)^{1/\sigma'} \\
&
\leq C \Big( \int_{\mathbb{B}_1} |y|^a U^2 \dX + \int_{\mathbb{B}_1} |y|^a |\nabla U|^2 \dX \Big) \esssup_{t \in (-1,1)} \Big( \int_{\mathbb{B}_1} |y|^a U^2 \dX \Big)^{\gamma -1},
\end{aligned}
\]
by H\"older inequality and Theorem \ref{thm:SobEmbedLocalEll}. The thesis follows integrating in time.
\end{proof}
%

%
%
%
%
%
%
\section{}\label{App:TechnicalLemma}
In this second appendix we show a technical lemma. The proof is quite standard, but we include it for completeness.
\begin{lem}\label{lem:IneqTruncations} Let $N \geq 1$, $a \in (-1,1)$, $R > 0$, $\vep \in (0,1)$, $(p,q)$ satisfying \eqref{eq:AssumptionsExponents}, $F_\vep \in L^{p,a}(\QQ_R)$ and $f_\vep \in L_\infty^q(Q_R)$. Then, the following statements hold true:

(i) If $U_\vep$ is a weak solution in $\QQ_R$, then for every $l \in \RR$, the functions
\[
V_+ := (U_\vep - l)_+ \quad \text{ and } \quad V_- := (U_\vep - l)_-
\]
are weak subsolutions in $\QQ_R$, with $F_\vep$ and $f_\vep$ replaced by $(F_\vep)_+$ and $(f_\vep)_+$, and $(F_\vep)_-$ and $(f_\vep)_-$, respectively.

(ii) If $U_\vep$ is a weak subsolution in $\QQ_R$ and $F_\vep,f_\vep \geq 0$, then for every $l \geq 0$, the function
\[
V = \max\{U,l\}
\]
is a weak subsolution in $\QQ_R$.
\end{lem}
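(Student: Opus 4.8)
\textbf{Proof proposal for Lemma \ref{lem:IneqTruncations}.}

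The plan is to verify both statements directly from the weak formulation \eqref{eq:WeakEquation} (resp. the subsolution inequality) by testing against carefully chosen nonnegative test functions built from the relevant truncations. For part (i), the starting point is that $U_\vep$ being a weak solution means \eqref{eq:WeakEquation} holds with equality for \emph{all} $\eta \in H_0^{1,a}(\QQ_R)$; the idea is to plug in $\eta$ replaced by products of the form $(U_\vep-l)_+\,\eta$ with $\eta \geq 0$, exploiting the chain rule for Sobolev functions applied to the Lipschitz truncation $s \mapsto (s-l)_+$. Concretely, since $(s-l)_+$ is Lipschitz with derivative $\chi_{\{s>l\}}$, on the set $\{U_\vep > l\}$ we have $\nabla V_+ = \nabla U_\vep$ and $\partial_t V_+ = \partial_t U_\vep$ (a.e.), while on $\{U_\vep \le l\}$ both vanish. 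Testing \eqref{eq:WeakEquation} with the admissible nonnegative function $\varphi := \eta\,\chi$, where $\chi$ is an approximation of $\chi_{\{U_\vep>l\}}$ (rigorously, one first works with $\eta\cdot\psi_m(U_\vep)$ for smooth monotone $\psi_m \uparrow \chi_{\{s>l\}}$ and passes to the limit), and accounting for the terms where derivatives fall on $\chi$ — which produce a sign-definite contribution because $(U_\vep-l)_+$ is monotone in $U_\vep$ — one arrives, after discarding the correctly-signed ``error'' terms, at
\[
\int_{\QQ_R} |y|^a\big(\vep\,\partial_t V_+\,\partial_t\eta + \partial_t V_+\,\eta + \nabla V_+\cdot\nabla\eta - F_\vep\,\eta\big)\dX\rd t - \int_{Q_R} f_\vep\,\eta|_{y=0}\dx\rd t \le 0,
\]
and since on $\{U_\vep>l\}$ we may further replace $F_\vep$ by $(F_\vep)_+ \ge F_\vep$ and $f_\vep$ by $(f_\vep)_+$ without spoiling the inequality (the contributions of $F_\vep,f_\vep$ on the complementary set vanish after multiplication by $V_+$-type factors), this gives exactly the claimed subsolution inequality for $V_+$ with data $(F_\vep)_+$, $(f_\vep)_+$. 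The case of $V_- = (U_\vep-l)_- = (l - U_\vep)_+$ is symmetric: it amounts to applying the same argument to $-U_\vep$, which is a weak solution with data $-F_\vep$, $-f_\vep$, and noting $(-F_\vep)_+ = (F_\vep)_-$, $(-f_\vep)_+ = (f_\vep)_-$.

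For part (ii), write $V = \max\{U_\vep,l\} = l + (U_\vep - l)_+$. Since the constant $l$ contributes nothing to any of the terms in the weak inequality (time and space derivatives of a constant vanish, and adding a constant only changes the $F_\vep,f_\vep$-free part trivially — more precisely the added constant does not affect $\partial_t V, \nabla V$), it suffices to show $(U_\vep-l)_+$ is a subsolution with data $F_\vep, f_\vep$ given that $U_\vep$ is a subsolution with $F_\vep, f_\vep \ge 0$. This is the one-sided analogue of the computation in (i): test the subsolution inequality for $U_\vep$ against $\eta\,\psi_m(U_\vep)$ with $\eta \ge 0$ and $\psi_m$ smooth, nondecreasing, $\psi_m \uparrow \chi_{\{s>l\}}$, so that $\eta\,\psi_m(U_\vep) \ge 0$ is admissible; the extra terms involving $\psi_m'(U_\vep)\ge 0$ carry a favorable sign (for the gradient term, $\psi_m'(U_\vep)|\nabla U_\vep|^2\eta \ge 0$; the $\vep$-second-order and drift terms are handled by writing them via the primitive of $\psi_m$ and integrating by parts, again producing sign-definite boundary-free contributions), and the terms $-F_\vep\,\eta\,\psi_m(U_\vep)$ and $-f_\vep\,\eta\,\psi_m(U_\vep)|_{y=0}$ converge, using $F_\vep,f_\vep\ge 0$ and $0\le\psi_m\le 1$, to something dominated by $-F_\vep\,\eta$ and $-f_\vep\,\eta|_{y=0}$ on the relevant set. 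Passing $m\to\infty$ by dominated convergence yields the subsolution inequality for $V$.

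The only genuinely delicate point — and where I would spend the most care — is the justification of the chain rule and the integration by parts for the \emph{time-dependent} terms $\vep\,\partial_t U_\vep\,\partial_t\eta$ and $\partial_t U_\vep\,\eta$ in the weighted Sobolev setting, since $U_\vep$ has only $\partial_t U_\vep \in L^{2,a}$ and the naive manipulation $\partial_t U_\vep\,\psi_m'(U_\vep)\,\partial_t U_\vep$ requires knowing $\psi_m(U_\vep)$ is an admissible test function with the expected weak time derivative. The clean way is to first mollify $U_\vep$ in time (Steklov averages) to get the chain-rule identities for smooth-in-$t$ approximations, establish the desired inequality at that level where all products are classically meaningful, and then remove the mollification using the $L^{2,a}$-convergence of $\partial_t U_\vep$ and the continuity of the truncation maps; the weight $|y|^a \in A_2$ plays no adverse role here since all estimates are in weighted $L^2$. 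The boundary integral over $Q_R$ is handled by the trace theory of \cite{Nekvinda1993:art} together with the fact that truncation is continuous on the trace space. Everything else is a routine sign bookkeeping, which is why the statement is labeled standard.
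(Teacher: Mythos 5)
Your proposal is correct and follows essentially the same route as the paper: there one tests the weak formulation with $p_j'(U_\vep)\eta$, where $p_j$ are smooth convex approximations of $s\mapsto s_+$ (so $p_j'$ plays the role of your $\psi_m$), discards the sign-definite terms carrying $p_j''\geq 0$, passes to the limit by dominated convergence, and treats $V_-$ by applying the result to $-U_\vep$ with data $-F_\vep,-f_\vep$, exactly as you do (part (ii) being the analogous one-sided computation). The only difference is that your Steklov-average/time-mollification precaution is unnecessary: since $\partial_t U_\vep\in L^{2,a}(\QQ_R)$ for each fixed $\vep>0$, the composition $\psi_m(U_\vep)\eta$ is already an admissible $H_0^{1,a}(\QQ_R)$ test function and the space-time chain rule applies directly, which is precisely why the paper's argument is the purely elliptic one.
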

\begin{proof} We give a sketch of the proof of (i) (part (ii) follows analogously). Let $U = U_\vep$, $F = F_\vep$, $f= f_\vep$ and $l \in \RR$. Since $U-l$ is still a weak solution in $\QQ_R$, it is enough to consider the case $l = 0$.

Let $p(U) = U_+$ and consider a sequence of smooth functions $p_j:\RR \to \RR$ such that
\[
p_j,p_j',p_j'' \geq 0, \quad p_j = p \text{ in } \RR\setminus (-\tfrac{1}{j},\tfrac{1}{j}), \quad \|p_j-p\|_{H^{1,a}(\RR)} \leq \tfrac{1}{j},
\]
and let $V_j := p_j(U)$ for every integer $j \geq 1$. We fix a nonnegative $\eta \in C_0^\infty(\QQ_R)$ and consider
\[
\begin{aligned}
\int_{\mathbb{Q}_R} &|y|^a ( \varepsilon \partial_t V_j \partial_t \eta + \partial_t V_j \eta + \nabla V_j \cdot\nabla\eta - F_+ \eta) \dX\rd t - \int_{Q_R} f_+ \eta|_{y=0} \dx \rd t  \\
&= \int_{\mathbb{Q}_R} |y|^a ( \varepsilon p_j'(U) \partial_t U \partial_t \eta + p_j'(U) \partial_t U \eta + p_j'(U) \nabla U \cdot\nabla\eta - F_+ \eta) \dX\rd t - \int_{Q_R} f_+ \eta|_{y=0} \dx \rd t \\
&= \int_{\mathbb{Q}_R} |y|^a \big[ \varepsilon \partial_t U \partial_t(p_j'(U) \eta) + \partial_t U (p_j'(U)\eta) + p_j'(U) \nabla U \cdot\nabla(p_j'(U) \eta) \big] \dX\rd t \\
&\quad - \vep \int_{\QQ_R} (\partial_t U)^2 p_j''(U)\eta \dX\rd t - \int_{\QQ_R} |\nabla U|^2 p_j''(U)\eta \dX\rd t  - \int_{\mathbb{Q}_R} |y|^a F_+ \eta \dX\rd t - \int_{Q_R} f_+ \eta|_{y=0} \dx \rd t \\
&\leq \int_{\mathbb{Q}_R} |y|^a F p_j'(U)\eta \dX\rd t + \int_{Q_R} f p_j'(u)\eta|_{y=0} \dx \rd t - \int_{\mathbb{Q}_R} |y|^a F_+ \eta \dX\rd t - \int_{Q_R} f_+ \eta|_{y=0} \dx \rd t
\end{aligned}
\]
where we have used \eqref{eq:WeakEquation} with test $p_j'(U) \eta$ and that $\eta,p_j',p_j'' \geq 0$. Since
\[
\int_{\mathbb{Q}_R} |y|^a F p_j'(U)\eta \dX\rd t + \int_{Q_R} f p_j'(u)\eta|_{y=0} \dx \rd t \to \int_{\mathbb{Q}_R\cap\{U > 0\}} |y|^a F \eta \dX\rd t + \int_{Q_R\cap\{u > 0\}} f \eta|_{y=0} \dx \rd t,
\]
as $j \to +\infty$, thanks to the Lebesgue dominated convergence theorem (and trace theorem), we deduce that $U_+$ is a weak subsolution in $\QQ_R$ with $F_+$ and $f_+$ by passing to the limit as $j \to + \infty$.

To complete the proof, it is enough to notice that $-U$ is a weak solution in $\QQ_R$ with $-F$ and $-f$. Then $U_- = (-U)_+$ is a weak subsolution in $\QQ_R$ with $F_- = (-F)_+$ and $f_- = (-f)_+$.
\end{proof}
\begin{rem}
The same proof shows that if $U_\vep$ is a weak subsolution in $\QQ_R$, then for every $l \in \RR$, the function $(U_\vep - l)_+$ is a weak subsolution in $\QQ_R$, with $F_\vep$ and $f_\vep$ replaced by $(F_\vep)_+$ and $(f_\vep)_+$, respectively.
\end{rem}

%
%
%
%
%
%
\section{}\label{App:Notations}
We report below the list of notations we use in the paper.
\[
\begin{aligned}
&I \; \text{ denotes a generic interval in } \mathbb{R} \\
& X = (x,y) \\
& B_r(x_0) = \{x \in \mathbb{R}^N: |x-x_0|^2 < r^2 \} \\
& B \; \text{ denotes a generic ball in } \mathbb{R}^N \\
& |\BB|_a := \int_\BB|y|^a\dX \\
& \mathbb{B}_r(X_0) = \{(x,y) \in \mathbb{R}^{N+1}: |x-x_0|^2 + |y-y_0|^2 < r^2 \} \\
& \mathbb{B} \; \text{ denotes a generic ball in } \mathbb{R}^{N+1} \\
& Q_r(x_0,t_0) = B_r(x_0) \times \{0\} \times (t_0-r^2,t_0+r^2) \quad (\text{parabolic cylinder in } \mathbb{R}^N\times\{0\}\times \RR )\\
& Q \; \text{ denotes a generic parabolic cylinder in } \mathbb{R}^N\times\{0\}\times(0,\infty) \\
&Q_r^+(x_0,t_0) := B_r(x_0) \times \{0\} \times (t_0,t_0+r^2) \\
&Q_\infty = \mathbb{R}^N  \times \{0\}\times (0,\infty) \\
& \mathbb{Q}_r(X_0,t_0) = \mathbb{B}_r(X_0) \times (t_0-r^2,t_0+r^2) \quad (\text{parabolic cylinder in } \mathbb{R}^{N+1}\times(0,\infty) ) \\
& \mathbb{Q}_r^+(X_0,t_0) = \mathbb{B}_r(X_0) \times (t_0,t_0+r^2) \\
& \mathbb{Q}_\infty = \mathbb{R}^{N+1}\times (0,\infty) \\
& \mathbb{Q} \; \text{ denotes a generic parabolic cylinder in } \mathbb{R}^{N+2} \\
& |\QQ|_a := \int_\QQ|y|^a\dX\rd t \\
& \mathbb{R}^{N+1}_+ := \mathbb{R}^N \times \{y > 0\} \\
& \osc_{\QQ} U := \esssup_\QQ U - \essinf_\QQ U \\
& [U]_{C^{\alpha,\alpha/2}(\QQ)} := \sup_{\substack{(X,t),(Y,\tau) \in \QQ \\ (X,t)\not=(Y,\tau)}} \frac{|U(X,t) - U(Y,\tau)|}{\|(X-Y,t-\tau)\|^\alpha}, \qquad\quad \|(Z,s)\| := \max\{|Z|,\sqrt{|s|}\}, \quad \alpha \in (0,1) \\
& \|U\|_{C^{\alpha,\alpha/2}(\QQ)} := \|U\|_{L^\infty(\QQ)} + [U]_{C^{\alpha,\alpha/2}(\QQ)}, \qquad \alpha \in (0,1).
\end{aligned}
\]
%




\end{document}